\newcounter{contatore}
\newcommand{\osc}{\mathrm{osc}}
\newcommand{\sdiff}{\bigtriangleup}
\newcommand{\spazio}{\textrm{ }}
\newcommand{\restr}{\upharpoonright}
\newcommand{\pow}[1]{\mathscr{P}(#1)} 
\newcommand{\forces}{\Vdash}
\newcommand{\wh}{\, \widehat{}\ }
\newtheorem{cl}{Claim}
\title{Oscillations and their applications in partition calculus}
\author{Laura Fontanella and Boban Veli\v{c}kovi\'c}
\institute{Equipe de Logique, Universit\'e de Paris 7,
2 Place Jussieu, 75251 Paris, France
}
\begin{document}

\maketitle
\thispagestyle{plain}

\pagestyle{plain}
\pagenumbering{arabic}

\begin{abstract}{Oscillations are a powerful tool for building examples of
colorings witnessing negative partition relations. We survey several results
illustrating the general technique and present a number of applications.}
\end{abstract}

\section{Introduction}

We start by recalling some well known notation.
Given three cardinals $\kappa, \lambda, \mu$ and $n<\omega,$ the notation
$$\kappa \to (\lambda)^n_{\mu}$$
means that for all function $f:[\kappa]^n\to \mu,$ there exists $H\subseteq \kappa$
with $\vert H\vert=\lambda$ and such that $f\restr [H]^n$ is constant.
We say that $f$ is a \emph{coloring of $[\kappa]^n$ in $\mu$ colors} and $H$ is an \emph{homogeneous set}.
Given $\kappa, \lambda, \mu, \sigma$ and $n$ as before, we write
$$
\kappa \to [\lambda]^n_{\mu}
$$
if for every coloring $f:[\kappa]^n\to \mu$ there exists $H\subseteq \kappa$ of
cardinality $\lambda$ such that $f''[H]^n\neq \mu$.
We write
$$
\kappa \to [\lambda]^n_{\mu, \sigma}
$$
if for every coloring $f:[\kappa]^n\to \mu$ there exists $H\subseteq \kappa$ of
cardinality $\lambda$ such that $|f''[H]^n|\leq \sigma$.

One can extend the above notation to sets with additional structures, such as linear or partial orderings,
graphs, trees, topological or vector spaces, etc. For instance, given two
topological spaces $X$ and $Y$, then
$$
X\to (\mathrm{top}\textrm{ }Y)^n_{\mu}
$$
means that for all $f:[X]^n\to \mu$ there exists a subset $H$ of $X$ homeomorphic to $Y$
such that $f\restriction [H]^n$ is constant.
Similarly, we can define statements such as $X\to [\mathrm{top}\textrm{ }Y]^n_{\mu}$,
$X\to [\mathrm{top}\textrm{ }Y]^n_{\mu,\sigma}$, etc.

We denote by $[\mathbb{N}]^{<\omega}$ the set of all finite subsets of $\mathbb{N}$
and by $[\mathbb{N}]^{\omega}$ the set of all infinite subsets of $\mathbb{N}$.
We often identify a set $s$ in $[\mathbb{N}]^{<\omega}$ (or $[\mathbb{N}]^{\omega}$)
with its increasing enumeration. When we do this, we will write
$s(i)$ for the $i$-th element of $s$, assuming it exists. In this way, we
identify $[\mathbb{N}]^{\omega}$ with $(\omega)^{\omega}$, the set of strictly increasing sequences
from $\omega$ to $\omega$, which is a $G_\delta$ subset of the Baire space $\omega^{\omega}$,
and thus is itself a Polish space. For $s,t\in [\mathbb{N}]^{<\omega}$
we write $s \sqsubseteq t$ to say that $s$ is an initial segment of $t$.
In this way, we can view $([\mathbb{N}]^{<\omega}, \sqsubseteq)$ as a tree.
For a given $s\in [\mathbb{N}]^{<\omega}$ we denote by $N_s$ the set of all infinite
increasing sequences of integers which extend $s$.
In general, if $T$ is a subtree of $[\mathbb{N}]^{<\omega}$ then $T_s$ will denote the set of
all sequences of $T$ extending $s$.
We will need some basic properties of the Baire space (or rather $[\mathbb{N}]^{\omega}$)
and the Cantor space $\{ 0,1\}^{\omega}$ with the usual product topologies.
For these facts and all undefined notions, we refer the reader to \cite{Kechris}.

The paper is organized as follows. In \S 2 we discuss partitions of the rationals
as a topological space. The basic tool is oscillations of finite sets of integers.
In \S 3 we consider infinite oscillations of tuples of real numbers and discuss several
applications to the study of inner models of set theory. In \S 4 we discuss
finite oscillations of tuples of reals of a slightly different type.
Finally, in \S 5 we present oscillations of pairs of countable ordinals and,
in particular, outline Moore's ZFC construction of an $L$-space.
We point out that none of the results of this paper are new and
we will give a reference to the original paper for each of the results we mention.
Our goal is not to give a comprehensive survey of all applications of oscillations
in combinatorial set theory, but rather to present several representative results
which illustrate the general method.

These are lecture notes of a tutorial given by the second author
at the 2nd Young Set Theory Workshop held at the CRM in Bellaterra,
April 14-18 2009. The notes were taken by the first author.

\section{Negative Partition Relations on the Rationals}

We start with a simple case of oscillations. Given $s,t\in [\mathbb{N}]^{<\omega}$
we define an equivalence relation $\sim$ on $s\bigtriangleup t$ by:

$$
n \sim m \iff ([n,m]\subseteq (s\setminus t)\lor [n,m] \subseteq (t\setminus s)),
$$
for all $n\leq m$ in $s\sdiff t.$ We now define the function $\osc: ([\mathbb{N}]^{<\omega})^2\to {\mathbb{N}}$ by
$$
\osc(s,t)=\vert (s\sdiff t)/_{\sim}\vert,
$$
for all $s,t\in [\mathbb{N}]^{<\omega}.$ If, for example, $s$ and $t$ are the two sets represented in the following picture,
then $\osc(s,t)=4.$

$$\setlength{\unitlength}{0.8cm}
\begin{picture}(10,3)
\put(0,1){\vector(1,0){9}}
\put(0,2){\vector(1,0){9}}
\put(0,0.95){\line(0,1){0.1}}
\put(0,1.95){\line(0,1){0.1}}
\put(9.2,0.9){$\omega$}
\put(9.2,1.9){$\omega$}
\linethickness{0.6mm}
\put(1,0.92){$[$}
\put(2,0.92){$]$}
\put(2.5,0.92){$[$}
\put(4,0.92){$]$}
\put(4.5,0.92){$[$}
\put(5,0.92){$]$}
\put(6.5,0.92){$[$}
\put(7.5,0.92){$]$}
\put(1,1.92){$[$}
\put(2,1.92){$]$}
\put(4.5,1.92){$[$}
\put(5,1.92){$]$}
\put(5.5,1.92){$[$}
\put(6.3,1.92){$]$}
\put(7.7,1.92){$[$}
\put(8.2,1.92){$]$}
\put(0,1.4){$s$}
\put(0,2.4){$t$}
\put(2.5,1){\line(1,0){1.6}}
\put(6.5,1){\line(1,0){1.1}}
\put(5.5,2){\line(1,0){0.9}}
\put(7.7,2){\line(1,0){0.6}}
\end{picture}$$


The following theorem is due to Baumgartner (see \cite{Baumgartner}).

\begin{theorem}[\cite{Baumgartner}]\label{Baum} $\mathbb{Q} \not\to [\mathrm{top}\spazio \mathbb{Q}]^2_{\omega}.$
\end{theorem}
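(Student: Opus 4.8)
The plan is to identify $\mathbb{Q}$ with a countable dense subset of $[\mathbb{N}]^\omega$ (for instance, take the increasing enumerations of a fixed almost disjoint family, or more simply work with a countable dense $D\subseteq\{0,1\}^\omega$ reencoded as finite-difference data) and to define a coloring $f\colon[\mathbb{Q}]^2\to\omega$ built out of the $\osc$ function applied to suitable finite approximations of the two chosen points. Concretely, given two distinct rationals coded by reals $x\neq y$, let $n=n(x,y)$ be the first place where the two codes split, and set $f(\{x,y\})=\osc(x\restr n',\, y\restr n')$ (or a bounded variant thereof, e.g. the parity or a capped value), where $n'$ is chosen a bit past the splitting level so that the oscillation count has stabilized in a controlled way. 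The point of using $\osc$ is that it is highly sensitive to the local interleaving pattern of the two sequences, so that along any subset order-isomorphic and homeomorphic to $\mathbb{Q}$ one can steer the oscillation number to any prescribed value by choosing a third point that weaves between two given ones in a prescribed fashion.

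First I would set up the combinatorial core: a lemma saying that for any finite sequences $s,t$ and any target $k$, one can find $u$ with $s,t\sqsubseteq$-compatible extensions such that $\osc$ of the relevant pair realizes $k$; this is the engine that lets us hit every color. Next I would verify the topological bookkeeping: any $H\subseteq\mathbb{Q}$ that is homeomorphic to $\mathbb{Q}$ must be dense-in-itself and, after passing to a subset, order-isomorphic to $\mathbb{Q}$, hence contains, for any two of its points, cofinally many points of $H$ lying strictly between them in the ordering and converging to the splitting configuration we need. Combining these two facts, given such an $H$ and any color $k<\omega$, I would locate three points of $H$ whose middle one is positioned so that $f$ takes value $k$ on one of the pairs, showing $f''[H]^2=\omega$, which is exactly the failure of $\mathbb{Q}\to[\mathrm{top}\ \mathbb{Q}]^2_\omega$.

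The main obstacle I anticipate is the second step's interaction with the first: it is not enough that $\osc$ can be made large in the abstract; one needs that \emph{inside a prescribed homeomorphic copy of $\mathbb{Q}$} the points are flexible enough to realize arbitrary oscillation values, and this requires choosing the coding of $\mathbb{Q}$ and the definition of $n'$ carefully so that density in the order topology translates into the ability to splice arbitrarily long alternating runs of "in $s\setminus t$" versus "in $t\setminus s$" between the codes of two given points of $H$. Getting the definition of $f$ to be simultaneously well-defined (independent of the slack in choosing $n'$), bounded-free (genuinely onto $\omega$), and robust under passing to homeomorphic subspaces is the delicate part; once that is arranged, the argument closes quickly.
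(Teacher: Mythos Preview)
Your proposal has a genuine gap at the very first step: the coloring you describe is not well-defined. You code $\mathbb{Q}$ inside $[\mathbb{N}]^\omega$ or $\{0,1\}^\omega$ and set $f(\{x,y\})=\osc(x\restr n',y\restr n')$ for some $n'$ ``a bit past the splitting level so that the oscillation count has stabilized.'' But for generic $x,y$ there is no stabilization: once $x$ and $y$ have split, $\osc(x\restr n',y\restr n')$ can keep growing with $n'$, and any fixed rule for choosing $n'$ (say $n'=n+c$) will bound the range of $f$ and prevent it from being onto $\omega$. Your suggested fallbacks (``parity or a capped value'') make this worse, since they give only finitely many colors. The later paragraphs inherit this problem: your combinatorial lemma and the ``three points of $H$'' argument are stated only as intentions, with the actual content (``splice arbitrarily long alternating runs \dots\ inside a prescribed homeomorphic copy'') identified by you as the main obstacle but left unresolved.

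The paper avoids all of this by a different and much simpler identification: it takes $\mathbb{Q}\simeq[\mathbb{N}]^{<\omega}$, the \emph{finite} subsets of $\mathbb{N}$ with the pointwise-convergence topology, and uses $\osc$ itself as the coloring---no truncation, no choice of $n'$. The topological input is then the Cantor--Bendixson derivative: any $A\subseteq[\mathbb{N}]^{<\omega}$ homeomorphic to $\mathbb{Q}$ has $\delta^k(A)\neq\emptyset$ for every $k$, and a short induction (Lemma~\ref{lemmabaum}) shows that $\delta^k(X)\neq\emptyset$ forces $\{1,\dots,2k-1\}\subseteq\osc''[X]^2$. The inductive step is exactly the ``pick points converging to a given one and interleave'' idea you gesture at, but carried out cleanly because the points \emph{are} finite sets, so one can literally stack blocks $t_i\setminus t_{i-1}<u_i\setminus u_{i-1}<t_{i+1}\setminus t_i$ to increment the oscillation count. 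Recasting your attempt along these lines---finite sets as the model of $\mathbb{Q}$, Cantor--Bendixson rank as the bookkeeping device---would turn it into a correct proof.
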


This means, with our notation, that there exists a coloring $c: [\mathbb{Q}]^2 \to \omega$ such that
$c''[A]^2=\omega,$ for all $A\subseteq \mathbb{Q}$ with $A\approx \mathbb{Q}.$

Consider $[\mathbb{N}]^{<\omega}$ with the topology of pointwise convergence.
Let $X\subseteq [\mathbb{N}]^{<\omega}$ and $s\in [\mathbb{N}]^{<\omega}$.
Then $s\in \overline{X}$ iff for every $n >\sup(s)$ there is $t\in X$ such that $t\cap n =s$.
Given $s,t \in [\mathbb{N}]^{<\omega}$ we will write $s <t$ if $\max s<\min t$.

\begin{remark}
It is well known that $\mathbb{Q}\simeq [\mathbb{N}]^{<\omega},$ so we can view $\osc$ as a coloring of $[\mathbb{Q}]^2$.
\end{remark}

In order to prove Theorem \ref{Baum}, we recall the definition of the Cantor-Bendixson derivative:
$$\delta(X)=\{x\in X : \spazio x\in \overline{X\setminus \{x\}}\},$$
$$\delta^0(X)=X,$$
$$\delta^{k+1}(X)=\delta(\delta^{k}(X)).$$

We need the following lemma.

\begin{lemma}\label{lemmabaum} Suppose $X\subseteq [\mathbb{N}]^{<\omega}$ and $k>0$
is an integer such that $\delta^(k)\neq \emptyset$. Then $\osc''[X]^2\supseteq \{1, 2, ...,2k-1\}$.
\end{lemma}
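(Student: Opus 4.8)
The plan is to extract from the hypothesis a finitely branching configuration of iterated limit points lying inside $X$, and then to realize each prescribed value $v\in\{1,\dots,2k-1\}$ by picking two branches of this configuration whose ``increments'' interleave in a controlled way.

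First I would unpack the hypothesis. Writing $\delta^k(X)=\delta(\delta^{k-1}(X))$ and iterating — and using that each Cantor--Bendixson derivative of a subset of $[\mathbb{N}]^{<\omega}$ is again a subset of it — one gets a tree $\{x_\sigma:\sigma\in\omega^{\le k}\}\subseteq X$ such that for every $\sigma$ of length $<k$ the points $x_{\sigma\,^\frown i}$ $(i\in\omega)$ are pairwise distinct and converge to $x_\sigma$ in the topology of pointwise convergence. By the description of that topology recalled before Theorem~\ref{Baum}, convergence means that for each $n$ one has $x_{\sigma\,^\frown i}\cap n=x_\sigma\cap n$ for all large $i$; taking $n>\sup x_\sigma$ yields $x_\sigma\sqsubseteq x_{\sigma\,^\frown i}$ for large $i$, and moreover $\min(x_{\sigma\,^\frown i}\setminus x_\sigma)\to\infty$ as $i\to\infty$. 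Hence, passing to subsequences (equivalently, always choosing indices large enough when we need them), we may assume $x_\sigma\sqsubsetneq x_{\sigma\,^\frown i}$ for all $\sigma,i$, and that the ``new part'' $x_{\sigma\,^\frown i}\setminus x_\sigma$ can be pushed arbitrarily high. Throughout I use that the $\sim$-classes of $s\sdiff t$ are exactly the maximal intervals of (the increasing enumeration of) $s\sdiff t$ lying entirely on one side, so that $\osc(s,t)$ counts these maximal monochromatic runs.

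The value $1$ is immediate: take $s=x_\sigma$ and $t=x_{\sigma\,^\frown i}$ for some $\sigma$ with $|\sigma|<k$ (possible since $k\ge 1$). Then $s\sqsubsetneq t$, so $s\sdiff t=t\setminus s$ is an end-segment of $t$ all on the $t$-side, a single $\sim$-class, whence $\osc(s,t)=1$. For a value $v$ with $2\le v\le 2k$ — which covers all of $\{2,\dots,2k-1\}$ — fix $x:=x_{\langle\rangle}$ and build $\vec i,\vec j\in\omega^k$ with $\vec i(0)\ne\vec j(0)$, so that $s:=x_{\vec i}$ and $t:=x_{\vec j}$ are leaves whose common part is exactly $x$. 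Along $\vec i$, the set $s\setminus x$ decomposes as a union of increments $S_p=x_{\vec i\restr p}\setminus x_{\vec i\restr(p-1)}$, and since $x_{\vec i\restr p}$ end-extends $x_{\vec i\restr(p-1)}$ we automatically have $\max S_p<\min S_{p+1}$; similarly $t\setminus x=\bigcup_q T_q$ with $\max T_q<\min T_{q+1}$. Fix a word $w\in\{S,T\}^{2k}$ with exactly $k$ letters $S$, exactly $k$ letters $T$, and exactly $v$ maximal constant blocks; such a word exists precisely because $2\le v\le 2k$ (one must split $k$ symbols into $\lceil v/2\rceil$ nonempty blocks and $k$ symbols into $\lfloor v/2\rfloor$, and $\lceil v/2\rceil,\lfloor v/2\rfloor\le k$). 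Now read $w$ left to right: at the $p$-th occurrence of $S$, choose the next coordinate of $\vec i$ large enough that $S_p$ lies entirely above everything chosen so far, and symmetrically for $T$, also making the two level-$0$ choices distinct. The outcome is that the $2k$ increments are pairwise disjoint and appear, in the increasing enumeration of their union, exactly in the order of $w$. In particular $s\cap t=x$, so $s\sdiff t=\bigcup_pS_p\cup\bigcup_qT_q$ with each $S_p$ on the $s$-side and each $T_q$ on the $t$-side; thus the side-pattern of $s\sdiff t$ is $w$ with each letter repeated, its number of maximal monochromatic runs equals the number of blocks of $w$, namely $v$, and $\osc(s,t)=v$. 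Finally $s\ne t$ since the branches already split at level $0$. Combining, $\osc''[X]^2\supseteq\{1\}\cup\{2,\dots,2k\}\supseteq\{1,\dots,2k-1\}$.

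The routine but slightly fussy step is the first one — extracting the tree with the right convergence and spreading, and the bookkeeping that keeps the (finitely many) branches we actually use pairwise generic, so that the relevant symmetric differences really are disjoint unions of the increments. I expect the genuine content, and the point one must get exactly right, to be the second step: the observation that end-extension forces the increments along each branch to be increasing, so that the entire oscillation pattern of $(s,t)$ is governed by the single word $w$; then realizing a prescribed value reduces to the elementary combinatorial fact that every block count between $2$ and $2k$ is attained by some balanced $\{S,T\}$-word of length $2k$.
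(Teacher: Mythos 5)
Your proof is correct and uses the same essential mechanism as the paper's: from $\delta^k(X)\neq\emptyset$ one extracts two compatible chains of end-extensions inside $X$ (two branches of the iterated-derivative tree) and reads the oscillation off the interleaving pattern of the successive increments. The paper phrases this as an induction on $k$, building one fully alternating pair of branches $s=t_0\sqsubset\cdots\sqsubset t_k$, $s=u_0\sqsubset\cdots\sqsubset u_k$ with $t_i\setminus t_{i-1}<u_i\setminus u_{i-1}<t_{i+1}\setminus t_i$, and then extracting $\osc(t_{k-1},u_{k-1})=2k-2$ and $\osc(t_k,u_{k-1})=2k-1$, the smaller values being supplied by the inductive hypothesis. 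You dispense with the induction: for each target $v$ with $2\le v\le 2k$ you choose a balanced $\{S,T\}$-word of length $2k$ with exactly $v$ blocks and interleave the increments to realize that pattern, so the whole range reduces to the elementary observation that every block count in $\{2,\ldots,2k\}$ is attained by some balanced word. This is the same content repackaged more directly, and it buys a slightly cleaner statement (you even get $2k$, not just $2k-1$). One place where your write-up is in fact more careful than the paper's: your $\osc=1$ case pairs a node $x_\sigma$ with one of its strict end-extensions, giving a single $\sim$-class; the paper's base case as literally written pairs two nodes $t,u$ both strictly extending $s$ with $t\setminus s<u\setminus s$ and asserts $\osc(t,u)=1$, but such a pair in fact has $\osc(t,u)=2$ --- the intended pair there is $(s,t)$.
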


\begin{proof} The proof is by induction on $k.$ First assume $k=1,$ then let $s\in \delta(X)$.
This means that we can find $t,u\in X$ such that $s\sqsubset t, u$ and $t\setminus s <u\setminus s$.
 It follows that $\osc(t,u)=1.$ Assume that the property holds for all $l<k,$ we show that $\osc''[X]^2$
 takes values $2k-2, 2k-1.$ Fix $s\in \delta^k (X).$ Recursively pick $u_i, t_i \in \delta^{k-i}(X)$, for all $i\leq k$,
 such that the following hold:

\begin{enumerate}
\item $t_0=u_0=s;$
\item $s\sqsubset t_1 \sqsubset t_2\cdots \sqsubset t_k;$
\item $s\sqsubset u_1 \sqsubset u_2\cdots \sqsubset u_k;$
\item $t_i\setminus t_{i-1} < u_i\setminus u_{i-1} < t_{i+1}\setminus t_i,$ for all $i\in \{1,2,...,k\}.$
\end{enumerate}

Then $\osc(t_{k-1}, u_{k-1})=2k-2$ and $\osc(t_{k}, u_{k-1})=2k-1.$  \qed
\end{proof}

\begin{proof}(of {\bf Theorem \ref{Baum}}).
By Remark 1, it is sufficient to check that for all $A\subseteq [\mathbb{N}]^{<\omega}$ homeomorphic
to $\mathbb{Q},$ $\osc''[A]^2=\omega.$ Since $A\approx \mathbb{Q},$ we have $\delta^k(A)\neq \emptyset$, for all
integers $k$. Hence we can apply Lemma \ref{lemmabaum} and this completes the proof. \qed
\end{proof}

 An unpublished result of Galvin states that
$$\eta\to [\eta]^2_{n,2}$$
when $\eta$ is the order type of the rational numbers, and $n$ is any integer. Therefore, the order theoretic
version of Theorem \ref{Baum} does not hold.
Also, the coloring we build to prove Baumgartner's theorem is not continuous.
In fact, if we only consider continuous colorings, then we have
$$\mathbb{Q}\to_{cont}[\mathrm{top}\spazio \mathbb{Q}]^2_2.$$
If we want a continuous coloring, we need to work in $[\mathbb{Q}]^3$. The following result
is due to Todor\v{c}evi\'{c} (\cite{Todorcevic_cube}).

\begin{theorem}[\cite{Todorcevic_cube}]\label{cont} There is a continuous coloring
$c: [\mathbb{Q}]^3 \to \omega$ such that $c''[A]^3=\omega$,
 for all $A\subseteq \mathbb{Q}$ with $A\approx \mathbb{Q}$.
\end{theorem}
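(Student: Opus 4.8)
The plan is to refine the oscillation coloring from Theorem~\ref{Baum} so that it becomes continuous at the cost of passing from pairs to triples. The basic difficulty with $\osc$ as a coloring of $[\mathbb{Q}]^2$ is that, for $s,t\in[\mathbb{N}]^{<\omega}$, the quantity $\osc(s,t)$ depends on the full information of $s$ and $t$: two sets can be arbitrarily close in the topology of pointwise convergence (agreeing on a long initial segment) while their symmetric difference out near $\max(s)$ and $\max(t)$ changes the number of blocks. So the first step is to use the third point of the triple as a ``stopping device'': given $s<u$ in the sense of the ordering $\max s<\min u$, the set $u$ determines a finite cutoff, and we shall only count oscillations of a pair below that cutoff. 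Concretely, for $s,t,u$ with $\{s,t\}$ below $u$, define $c(s,t,u)$ to be (a variant of) $\osc(s\cap n, t\cap n)$ where $n=\min u$, or more robustly the number of $\sim$-equivalence classes of $(s\bigtriangleup t)\cap\min(u)$. Because this depends only on $s,t$ restricted to a bounded interval and on $\min u$, small perturbations of $s,t,u$ (in the pointwise topology, above that interval) do not change the value, which gives continuity; one has to check the boundary cases where the perturbation moves $\min u$, and arrange the definition (e.g.\ by a suitable half-open convention on the interval $[0,\min u)$) so that the value is locally constant there too.

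The second step is to verify the negative partition property: for every $A\subseteq[\mathbb{N}]^{<\omega}$ homeomorphic to $\mathbb{Q}$ and every $k$, we must find $s,t,u\in A$ with $\{s,t\}$ below $u$ realizing a prescribed value in $\{1,\dots,2k-1\}$. Here I would reuse the Cantor--Bendixson machinery of Lemma~\ref{lemmabaum} essentially verbatim, but with one extra layer of care. Since $A\approx\mathbb{Q}$, every derivative $\delta^k(A)$ is nonempty, indeed dense-in-itself; pick $s\in\delta^{k+1}(A)$ and build the sequences $t_i,u_i\in\delta^{k+1-i}(A)$ exactly as in the lemma, realizing oscillation values $2k-2$ and $2k-1$ on the pairs $(t_{k-1},u_{k-1})$ and $(t_k,u_{k-1})$ respectively, with all of $s\bigtriangleup t_k$ and $s\bigtriangleup u_{k-1}$ concentrated on a common bounded interval. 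Then I would choose a \emph{third} point $u\in A$ lying entirely above that interval (possible since $\delta^{k+1-i}(A)$, hence $A$, contains points extending any prescribed initial segment to arbitrarily large values — this is where homeomorphism with $\mathbb{Q}$, and in particular the lack of isolated points, is used again). By construction $\min u$ is beyond the region where the oscillation pattern lives, so $c(t_{k-1},u_{k-1},u)=\osc(t_{k-1},u_{k-1})=2k-2$ and similarly $c(t_k,u_{k-1},u)=2k-1$. Ranging over $k$ gives $c''[A]^3=\omega$. The odd/even bookkeeping is handled precisely as in Lemma~\ref{lemmabaum}.

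I expect the main obstacle to be the interaction of the two requirements: the cutoff $\min u$ must simultaneously be large enough to expose the full oscillation pattern of the pair (needed for the partition property) and yet the truncation must be defined so that the resulting coloring is genuinely continuous as a function on the \emph{triple}, including when small moves of the first two coordinates push their symmetric difference across $\min u$, or small moves of $u$ change $\min u$. Getting a single clean formula that is locally constant in all three variables simultaneously — rather than a case analysis that secretly has discontinuities at the boundaries — is the delicate point; the standard fix is to count classes of $(s\bigtriangleup t)\cap[0,\min u)$ and to verify that for $(s',t',u')$ sufficiently close to $(s,t,u)$ one has $(s'\bigtriangleup t')\cap[0,\min u')=(s\bigtriangleup t)\cap[0,\min u)$ by using that finitely many coordinates below the relevant threshold are frozen in a neighborhood. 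Once continuity is pinned down, the combinatorial half is a routine adaptation of the Baumgartner argument already given.
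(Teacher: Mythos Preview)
Your high-level idea --- use one point of the triple as a stopping device and compute a truncated oscillation of the remaining pair below it --- is in the right spirit, but the specific implementation fails at the partition step.

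The concrete gap is here: you want, inside an arbitrary $A\approx\mathbb{Q}$, an element $u$ whose minimum lies above the region carrying your oscillation pattern, and you justify this by saying that $A$ ``contains points extending any prescribed initial segment to arbitrarily large values''. That yields points with large $\max$, not large $\min$; the two are unrelated. Take $A=\{\,s\in[\mathbb{N}]^{<\omega}:0\in s\,\}$. This is a countable metrizable space without isolated points, hence homeomorphic to $\mathbb{Q}$, yet every $u\in A$ has $\min u=0$. On such an $A$ your cutoff is identically $0$, so whatever convention you adopt for triples not in the special configuration ``two below one'', the resulting coloring is constant on $[A]^3$. The same obstruction shows the definition cannot simply be completed by cases: on copies of $\mathbb{Q}$ in which all minima are bounded, a cutoff of the form $\min u$ carries no information at all.

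The paper avoids this by choosing an \emph{intrinsic} cutoff. For a triple $\{s,t,u\}$ let
\[
n=\max\{\min(s\sdiff t),\ \min(t\sdiff u),\ \min(s\sdiff u)\};
\]
this is the least level at which the restrictions $s\cap(n+1),t\cap(n+1),u\cap(n+1)$ are pairwise distinct, and at level $n$ the three restrictions $s\cap n,t\cap n,u\cap n$ take exactly two values $\{v,w\}$. One then sets $\osc_3(s,t,u)=\osc(v,w)$. This value is determined by finitely many coordinates of each of $s,t,u$, so the coloring is continuous, and it is defined on every triple with no case split. For the partition step one takes $s\in\delta^k(X)$ together with $t,u\in X$ satisfying $s\sqsubset t,u$ and $t\setminus s<u\setminus s$; the branching level is then $\min(u\setminus s)$ and $\{v,w\}=\{s,t\}$, so the computation of Lemma~\ref{lemmabaum} goes through verbatim --- with $s$ itself serving as the third member of the triple, and no need to locate any element of $A$ with large $\min$.
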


\begin{proof}

Given $s,t,u \in [\mathbb{N}]^{<\omega},$ we define
$$\sdiff(s,t)=\min (s\sdiff t)$$
$$\sdiff(s,t,u)=\max \{\sdiff(s,t), \sdiff(t,u), \sdiff(s,u)\}.$$
The value of $\sdiff(s,t,u)$ is equal to the least $n\in \mathbb{N}$ such that
$\vert \{s\cap (n+1), t\cap (n+1), u\cap (n+1)\}\vert=3.$
So, in particular, for such an integer $n,$ we have
$\vert \{s\cap n, t\cap n, u\cap n\}\vert=2.$ Let $\{v, w\}=\{s\cap n, t\cap n, u\cap n\},$ then we define
$$\osc_3(s,t,u)=\osc(v,w).$$

The coloring \emph{$\osc_3$} is obviously continuous.
The proof that this coloring works is similar to the one given for Theorem \ref{Baum}.
We can prove, analogously, that if $X\subseteq [\mathbb{N}]^{<\omega}$ and
$\delta^{k}(X)\neq \emptyset$, for some integer $k>0$,
then $\osc_3''[X]^2\supseteq \{1, 2, ...,2k-1\}$.
Let us just see the case $k=1$. Fixing $s\in \delta(X)$ we can find
$t,u \in X$ such that $s\sqsubset t,u$ and $t\setminus s < u \setminus s$.
Then $\osc_3(s,t,u)=1$.
Finally one can apply this result to all subsets of $[\mathbb{N}]^{<\omega}$
that are homeomorphic to $\mathbb{Q},$
and this completes the proof. \qed
\end{proof}

\section{Oscillations of Real Numbers - Part 1}\label{part1}

We now discuss infinite oscillations and their applications.

Let $x\subseteq \mathbb{N},$ we define an equivalence relation $\sim_x$ on $\mathbb{N}\setminus x:$
$$n\sim_x m \iff [n,m]\cap x=\emptyset,$$
for all $n\leq m$ in $\mathbb{N}\setminus x.$ Thus, the equivalence classes of $\sim_x$ are
the intervals between consecutive elements of $x.$ Given $x,y,z\subseteq \mathbb{N},$
suppose that $(I_k)_{k\leq t}$ for $t\leq \omega$ is the natural enumeration of those equivalence classes
of $x$ which meet both $y$ and $z.$ We define a function $o(x,y,z):t\to \{0,1\}$ as follows:
$$o(x,y,z)(k)=0 \iff \min (I_{k}\cap y)\leq \min (I_{k}\cap z).$$


Notice that $o$ is a continuous function from
$$
\{(x,y,z)\in [[\mathbb{N}]^{\omega}]^3:  \vert (\mathbb{N}\setminus x)/_{\sim_x} \vert=\aleph_0\}
$$
to $2^{\leq \omega}$.

Note that $[\mathbb{N}]^{<\omega}$ ordered by $\sqsubseteq$ is a tree. A subset $T$
of $[\mathbb{N}]^{<\omega}$ is a {\em subtree} if it is closed under initial segments.

\begin{definition} Let $T$ be a subtree of $[\mathbb{N}]^{<\omega}$.
 We say that $t\in T$ is \emph{$\infty$-splitting} if for all
$k$ there exists $u\in T$ such that  $t \sqsubseteq u$ and $u(|t|)>k$.
 \end{definition}

\begin{definition} A subtree $T$ of $[\mathbb{N}]^{<\omega}$ is \emph{superperfect}
if for all $s\in T$ there exists $t\in T$
such that $s\sqsubseteq t$ and $t$ is $\infty$-splitting in $T$.
\end{definition}

\begin{definition}$X\subseteq [\mathbb{N}]^{\omega}$ is \emph{superperfect} if there is a superperfect tree
$T\subseteq [\mathbb{N}]^{<\omega}$ such that
$X=[T]=\{A\in [\mathbb{N}]^{\omega}:  A\cap k\in T, \mbox{ for all } k \}.$
\end{definition}

The following theorem is due to Veli\v{c}kovi\'{c} and Woodin (\cite{Velickovic}).

\begin{theorem}[\cite{Velickovic}]\label{continuouscoloring} Let $X, Y, Z\subseteq [\mathbb{N}]^{\omega}$
be superperfect sets.  Then $o''[X\times Y\times Z]\supseteq 2^{\omega}.$
\end{theorem}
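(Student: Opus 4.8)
The plan is to construct, by a simultaneous fusion argument on the three superperfect trees, a member of $X \times Y \times Z$ realizing any prescribed value in $2^\omega$. Fix $f \in 2^\omega$; I want $(x,y,z) \in X \times Y \times Z$ with $o(x,y,z) = f$. Let $S, T, U$ be superperfect trees with $[S] = X$, $[T] = Y$, $[U] = Z$. The idea is to build $x, y, z$ as increasing unions of finite approximations $s_k \in S$, $t_k \in T$, $u_k \in U$, while at stage $k$ committing a new equivalence class $I_k$ of $\sim_x$ that meets both $y$ and $z$, and arranging $o(x,y,z)(k) = f(k)$ by controlling which of $\min(I_k \cap y)$, $\min(I_k \cap z)$ is smaller.

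First I would set up the bookkeeping. Having built $s_k, t_k, u_k$ with $\max s_k$ large, I pass (inside $S$) to an $\infty$-splitting node above $s_k$; this lets me push the next element of $x$ arbitrarily high, creating a long gap in $x$ above the current $\max(t_k \cup u_k)$. Inside that gap I am free to extend $t$ and $u$: I first go to $\infty$-splitting nodes of $T$ and $U$ above $t_k, u_k$, then choose the next elements of $y$ and $z$ landing in the prescribed order — if $f(k) = 0$ put $\min(I_k \cap y) < \min(I_k \cap z)$, otherwise the reverse — which is possible precisely because $\infty$-splitting allows the next coordinate to be chosen as large (or, after committing one small value, as small subject to the tree) as needed. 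Then I close the gap by selecting the next element of $x$ above everything chosen so far, at the $\infty$-splitting node, and also make sure $y$ and $z$ each get at least one point inside the previous $x$-gap so that the class genuinely "meets both $y$ and $z$." Iterating $\omega$ times and taking unions yields $x \in [S]$, $y \in [T]$, $z \in [U]$ since each was built through the given trees, and by construction the $k$-th relevant class is $I_k$ with $o(x,y,z)(k) = f(k)$, so $o(x,y,z) = f$.

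The main obstacle is the interaction between the three constraints: I must simultaneously keep $x, y, z$ as branches through their respective superperfect trees, keep the gaps of $x$ long enough to host the required points of $y$ and $z$, and make infinitely many classes of $\sim_x$ meet both $y$ and $z$ (so that $t = \omega$ and the enumeration $(I_k)$ really has length $\omega$). The delicate point is that after I commit the first of the two values $\min(I_k \cap y), \min(I_k \cap z)$ to get the order $f(k)$ right, the second coordinate's tree may not have a node exactly where I want; superperfectness only gives me $\infty$-splitting, i.e. the ability to go \emph{high}, not to hit a specified small value. I resolve this by always choosing the \emph{smaller} of the two required points first and freely (any extension in the relevant tree works, as initial-segment closure guarantees a legal next value), then using $\infty$-splitting on the other tree to place its point wherever needed in the same $x$-interval; since the $x$-interval was made arbitrarily long in the previous step, there is always room. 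A careful statement of the recursion — specifying at each stage which tree is extended, to which $\infty$-splitting node, and in which order the $y$- and $z$-points are dropped — makes all of this routine, and the continuity remark on $o$ is not even needed for this direction.
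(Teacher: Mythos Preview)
Your approach is essentially the paper's: build $s_k\in T_1$, $t_k\in T_2$, $u_k\in T_3$ by simultaneous recursion, using $\infty$-splitting at each stage to place the new $y$- and $z$-blocks entirely inside a single $\sim_x$-class in the order dictated by $\alpha(k)$, then cap with the next $x$-block. One small quibble: your ``delicate point'' is resolved by $\infty$-splitting (so that even the \emph{smaller} of the two new points can be pushed above the current $\max(s_k)$), not by initial-segment closure of the tree; otherwise the argument is the intended one.
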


\begin{proof} Let $T_1,T_2,T_3\subseteq [\mathbb{N}]^{<\omega}$ be superperfect trees
such that $X=[T_1],$ $Y=[T_2]$ and $Z=[T_3].$
Given an $\alpha\in 2^{\omega}$,
we build sequences $\langle s_k\rangle_k,$ $\langle t_k\rangle_k,$ $\langle u_k\rangle_k,$
of nodes of $T_1,T_2$ and $T_3$, respectively, such that the following properties hold:

\begin{enumerate}
\item $s_0,t_0,u_0$ are the least $\infty$-splitting node of $T_1,T_2$ and $T_3,$ respectively;
\item $s_0\sqsubset s_1\sqsubset s_2\sqsubset \cdots s_k\sqsubset\cdots;$
\item $t_0\sqsubset t_1\sqsubset t_2\sqsubset \cdots t_k\sqsubset\cdots;$
\item $u_0\sqsubset u_1\sqsubset u_2\sqsubset \cdots u_k\sqsubset\cdots;$
\item $t_i\setminus t_{i-1}, u_i\setminus u_{i-1}< s_i\setminus s_{i-1};$
\item $t_i\setminus t_{i-1}<u_i\setminus u_{i-1},$ if $\alpha(i)=0$ and\\
$u_i\setminus u_{i-1}<t_i\setminus t_{i-1},$ if $\alpha(i)=1.$
\end{enumerate}

If $x=\underset{k<\omega}{\bigcup} s_k,$ $y=\underset{k<\omega}{\bigcup}t_k$ and
$z=\underset{k<\omega}{\bigcup}u_k,$ then $o(x,y,z)=\alpha$ and this completes the proof.
\qed
\end{proof}

\begin{corollary}[\cite{Velickovic}]\label{beta} If $X\subseteq [\mathbb{N}]^{\omega}$ is a superperfect set,
then $o''[X]^3\supseteq 2^{\omega}.$ \qed
\end{corollary}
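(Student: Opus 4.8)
The plan is to derive this from Theorem \ref{continuouscoloring} by taking $X=Y=Z$ in the obvious way, but with one technical adjustment: Theorem \ref{continuouscoloring} produces a surjection onto $2^\omega$ from the product $X\times Y\times Z$, whereas here we need the three coordinates of each triple to be \emph{distinct} (since $o$ is being applied as a coloring of $[X]^3$). So the first step is to observe that the construction in the proof of Theorem \ref{continuouscoloring}, applied with $T_1=T_2=T_3=T$ (where $X=[T]$), can be run so that the three branches $x,y,z$ come out pairwise distinct. Indeed, already at stage $i=1$ condition (6) forces $t_1\setminus t_0\neq u_1\setminus u_0$ (one is strictly below the other), and condition (5) forces $s_1\setminus s_0$ to lie strictly above both; since $s_0=t_0=u_0$, this immediately separates $x$, $y$ and $z$ past their common stem. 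Hence $x,y,z$ are three distinct elements of $X$ with $o(x,y,z)=\alpha$, which is exactly what is needed.

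Concretely, I would argue as follows. Fix a superperfect tree $T$ with $X=[T]$ and fix $\alpha\in 2^\omega$. Run the recursion of Theorem \ref{continuouscoloring} verbatim with $T_1=T_2=T_3=T$: this is legitimate because at each step the only thing used about $T_i$ is that every node extends to an $\infty$-splitting node, so we can always push the next block of $s_i$, then $u_i$ (or $t_i$), then $t_i$ (or $u_i$) into ever-larger integers while staying in $T$. One subtlety to check is that we do need $s_i$, $t_i$, $u_i$ to be \emph{distinct} nodes of $T$ at each finite stage in order for the "gaps" $t_i\setminus t_{i-1}$, $u_i\setminus u_{i-1}$, $s_i\setminus s_{i-1}$ to be nonempty and orderable as in (5)--(6); but this is automatic from the construction, since we choose them one after another to occupy disjoint intervals of integers. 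As in the original proof, setting $x=\bigcup_k s_k$, $y=\bigcup_k t_k$, $z=\bigcup_k u_k$ gives $o(x,y,z)=\alpha$, and by the remark above $x$, $y$, $z$ are pairwise distinct, so $\{x,y,z\}\in[X]^3$ and $\alpha\in o''[X]^3$.

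The only real obstacle is a bookkeeping one: when one writes $o$ as a coloring of the unordered set $[X]^3$ rather than of ordered triples, one must fix a convention for which of the three elements plays the role of "$x$" (the reference coordinate whose gaps dominate), and one must make sure the construction can realize every $\alpha$ under that fixed convention. This is not hard — in the recursion the distinguished coordinate is simply the one whose new block is always chosen \emph{last} (largest), so given $\{x,y,z\}$ we are free to decide in advance which one that will be. With that convention fixed, the argument above goes through unchanged, and since $|2^\omega|=2^{\aleph_0}$ we conclude $o''[X]^3\supseteq 2^\omega$. $\qed$
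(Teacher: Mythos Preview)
Your proposal is correct and follows exactly the intended route: the paper itself gives no proof beyond the \qed, treating the corollary as immediate from Theorem \ref{continuouscoloring} applied with $X=Y=Z$. Your extra care in noting that the recursion automatically produces pairwise distinct $x,y,z$ (since at stage $1$ the three new blocks occupy disjoint intervals above the common stem $s_0=t_0=u_0$), and in flagging the ordered-versus-unordered triple convention for $o$, is more than the paper spells out but entirely in the same spirit.
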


We now apply the previous theorem to prove some results about reals of inner models of set theory.

\begin{theorem}[\cite{Velickovic}]
Let $V,W$ be models of set theory such that $W\subseteq V$.
If there is a superperfect set $X$ in $V$ such that $X\subseteq W$ then $\mathbb{R}^{W}=\mathbb{R}^{V}$.
\end{theorem}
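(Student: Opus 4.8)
The strategy is to show that every real in $V$ already belongs to $W$, using the superperfect set $X \in V$ with $X \subseteq W$ as a ``resource'' that lets us read off arbitrary reals via the oscillation map $o$. Fix a real $\alpha \in 2^\omega \cap V$; we want $\alpha \in W$. Since $X$ is superperfect in $V$, by Corollary \ref{beta} we have $o''[X]^3 \supseteq 2^\omega$ in $V$, so there exist $x,y,z \in X$ (lying in $W$, since $X \subseteq W$) with $o(x,y,z) = \alpha$. Now $x,y,z \in W$, and $o$ is a continuous (in particular, absolutely definable) function on triples of infinite subsets of $\mathbb{N}$ whose first coordinate has infinitely many $\sim_x$-classes; the value $o(x,y,z)$ is computed purely from $x,y,z$ by a simple arithmetical recursion. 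Hence $\alpha = o(x,y,z)$ is definable in $W$ from parameters $x,y,z \in W$, so $\alpha \in W$. This gives $\mathbb{R}^V \subseteq \mathbb{R}^W$, and the reverse inclusion is immediate from $W \subseteq V$.

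The key steps, in order, are: (1) reduce to showing $2^\omega \cap V \subseteq W$, identifying reals with elements of $2^\omega$; (2) fix $\alpha \in 2^\omega \cap V$ and apply Corollary \ref{beta} inside $V$ to the superperfect set $X$ to obtain $x,y,z \in X$ with $o(x,y,z) = \alpha$; (3) observe $x,y,z \in W$ because $X \subseteq W$; (4) argue that $o$ is absolute between $W$ and $V$ — its definition involves only quantification over natural numbers and the recursion defining the equivalence classes $(I_k)_k$ and the bits $o(x,y,z)(k)$, so $W$ computes the same value $\alpha$; conclude $\alpha \in W$.

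The main obstacle is step (4): one must be careful that the domain condition in Theorem \ref{continuouscoloring} is met, namely that $\mathbb{N} \setminus x$ has infinitely many $\sim_x$-classes and that infinitely many of them meet both $y$ and $z$ — otherwise $o(x,y,z)$ would be a finite sequence rather than an element of $2^\omega$. This is automatically guaranteed by the construction in the proof of Theorem \ref{continuouscoloring}, where the $s_k, t_k, u_k$ are chosen so that between consecutive ``blocks'' of $x$ both $y$ and $z$ place new elements; so the $x,y,z$ witnessing $o(x,y,z) = \alpha$ genuinely have $o(x,y,z)$ of length $\omega$. Granting this, the absoluteness of the arithmetical recomputation of $o(x,y,z)$ in $W$ is routine, since $W$ is a transitive model of enough set theory and all the relevant quantifiers range over $\mathbb{N}$. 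One should also note that no assumption on $W$ beyond being an inner model of $V$ (sharing the same $\mathbb{N}$) is needed, which is what makes the conclusion $\mathbb{R}^W = \mathbb{R}^V$ so strong.
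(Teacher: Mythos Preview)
Your proof is correct and follows exactly the approach of the paper, which simply says ``This is trivial by applying Corollary \ref{beta}.'' You have unpacked this one-line proof in full detail: pick $\alpha\in 2^{\omega}\cap V$, use Corollary \ref{beta} in $V$ to find $x,y,z\in X\subseteq W$ with $o(x,y,z)=\alpha$, and then use the arithmetical (hence absolute) nature of $o$ to conclude $\alpha\in W$. Your additional remarks about the domain condition and absoluteness are careful but, as you note, routine.
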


\begin{proof} This is trivial by applying Corollary \ref{beta}.
\qed
\end{proof}

\medskip

\begin{question} Can we replace superperfect by perfect in the previous theorem?
\end{question}

Surprisingly, the answer depends on whether $\mathrm{CH}$ holds in the model $W,$ as it is asserted
in the following theorem due to Groszek and Slaman (see \cite{Groszek}).

\begin{theorem}[\cite{Groszek}]\label{alpha} Suppose that $W$ and $V$ are two models of set theory
such that $W\subseteq V$.  Assume that there is a perfect set $P$ in $V$ such that $P\subseteq W$.
If $\mathrm{CH}$ holds in $W,$ then $\mathbb{R}^{W}=\mathbb{R}^{V}.$
\end{theorem}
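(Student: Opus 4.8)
The plan is to separate a soft cardinal-arithmetic reduction from the genuinely combinatorial part.

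\emph{Step 1: forcing $\omega_1$ and $\mathrm{CH}$ to agree.} Write $P=[T]$ for a suitable perfect tree $T\in V$. Since a nonempty perfect set has cardinality $2^{\aleph_0}$, we have $|P|^V=(2^{\aleph_0})^V$, and since $P\subseteq\mathbb R^W\subseteq\mathbb R^V$ this gives $|\mathbb R^W|^V=(2^{\aleph_0})^V$; in particular $\mathbb R^W$ is uncountable in $V$. On the other hand $W\models\mathrm{CH}$, so $|\mathbb R^W|^W=\aleph_1^W$; were $\omega_1^W$ a countable ordinal in $V$, then $\mathbb R^W$ would be countable in $V$, a contradiction. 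Hence $\omega_1^W=\omega_1^V=:\omega_1$, and then $(2^{\aleph_0})^V=|\mathbb R^W|^V=\aleph_1^V$, so $\mathrm{CH}$ holds in $V$ as well. This is exactly where $\mathrm{CH}$ in $W$ enters: if $\omega_1^W$ were collapsed, $\mathbb R^W$ would be countable in $V$ and could not contain a perfect set at all, so one really does need an inner model in which the reals are $\aleph_1$-many \emph{and} $\omega_1$ is computed correctly.

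\emph{Step 2: recovering the reals.} It remains to show $\mathbb R^V\subseteq\mathbb R^W$. Using $\mathrm{CH}$ in $W$ and Step 1, fix an enumeration $\langle x_\alpha:\alpha<\omega_1\rangle\in W$ of $\mathbb R^W$, and keep the perfect tree $T\in V$ with $[T]\subseteq\mathbb R^W$. One cannot simply copy the superperfect argument (Theorem~\ref{continuouscoloring} and Corollary~\ref{beta}): there the decoding map $o$ is lightface, so as soon as the witnessing branches lie in $W$ so does the decoded real; but a perfect tree supplies only its canonical homeomorphism $h\colon 2^\omega\to[T]$, which is coded by the tree $T$, and $T$ need not belong to $W$ --- only its branches do. The idea of Groszek and Slaman is to replace this missing parameter by the enumeration $\langle x_\alpha\rangle$, which \emph{does} lie in $W$. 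Concretely, assuming for contradiction that some $r\in\mathbb R^V\setminus\mathbb R^W$, one runs along $T$, inside $V$, a fusion-style recursion of length $\omega_1$: at stage $\alpha$ one prunes $T$ to a perfect subtree, consulting $x_\alpha$ (and $r$), so arranged that a branch produced in the limit --- which lies in $[T]\subseteq\mathbb R^W$, hence equals some $x_\gamma$ --- is incompatible with the way it was built. This contradiction gives $\mathbb R^V\subseteq\mathbb R^W$, and hence the theorem.

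\emph{Main obstacle.} The hard point is carrying the recursion through uncountably many stages. Standard fusion is designed for countable length, with stems stabilizing at limit stages; here the recursion has length $\omega_1$ while $[T]$ has only $\aleph_1$ branches, so undisciplined pruning exhausts them all, and one must organize the construction so that the limit subtrees stay perfect (or at least nonempty) while the constraints accumulated from $r$ and from the initial segment $\langle x_\beta:\beta<\alpha\rangle$ of the enumeration remain mutually consistent. This delicate bookkeeping is the technical core of the result, and for its precise implementation I would follow the argument in \cite{Groszek}.
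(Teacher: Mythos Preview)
Your Step~1 is correct and matches the paper: CH in $W$ together with the uncountable perfect set $P\subseteq\mathbb{R}^W$ forces $\omega_1^W=\omega_1^V$.

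Step~2, however, is not a proof. You sketch an $\omega_1$-length fusion, concede that the bookkeeping is ``the technical core,'' and then defer the entire content to \cite{Groszek}. More to the point, you dismiss the superperfect route prematurely, and that is precisely the route the paper takes. From Step~1 the paper extracts the \emph{countable covering property for reals}: any $X\subseteq\mathbb{R}^W$ countable in $V$ is contained in a proper initial segment of the $\omega_1$-enumeration of $\mathbb{R}^W$ (which lives in $W$), hence in some $Y\in W$ countable in $W$. This feeds into Theorem~\ref{theta}: take a countable dense subset of $P$ in $V$, cover it by a countable $D\in W$ with $D\cap P$ dense in $P$, enumerate $D=\{d_n:n<\omega\}$ in $W$, and define $f\colon P\setminus D\to[\mathbb{N}]^\omega$ by $f(x)(n)=\sdiff(x,d_{k_x(n)})$, where $k_x(n)$ picks the first $d_k$ agreeing with $x$ strictly longer than all earlier picks. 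One checks that $f''[P\setminus D]$ is superperfect; since $f$ is coded by $D\in W$ and $P\subseteq W$, this superperfect set is contained in $W$, and Corollary~\ref{beta} finishes.

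Your instinct that the enumeration of $\mathbb{R}^W$ should replace the missing tree parameter is on target, but the mechanism is not an uncountable fusion: it is simply that countable covering lets one pull a countable dense $D\subseteq P$ back into $W$, and $D$ (not $T$) is the only parameter the decoding map needs. Your objection that $T\notin W$ is therefore beside the point, and no recursion of length $\omega_1$ is required.
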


In order to prove this theorem, let us introduce the following notion.

\begin{definition} Given two models of set theory $W$ and $V$ such that $W\subseteq V$
we say that $(W,V)$ satisfies the
 \emph{countable covering property for the reals} if, for all $X$ in $V$ such that
$X\subseteq \mathbb{R}^{W}$ and $X$ is countable in $V$,
there is an $Y$ in $W$ such that $X\subseteq Y$ and $Y$ is countable in $W$.
\end{definition}

We prove first the following theorem.

\begin{theorem}\label{theta} Given two models of set theory $W$ and $V$ such that
$W\subseteq V,$ suppose that there is a perfect set $P$ in $V$ such that $P\subseteq W$.
If $(W,V)$ satisfies the countable covering property for the reals,
then $\mathbb{R}^{W}= \mathbb{R}^{V}$.
\end{theorem}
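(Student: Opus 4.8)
The plan is to reduce the statement to the task of putting a perfect subtree of the tree of $P$ into $W$, and to accomplish that with the countable covering property. Working in Cantor space (the general Polish case is analogous), write $P=[T]$ for a perfect tree $T\subseteq 2^{<\omega}$ in $V$; for $\sigma\in 2^{<\omega}$ let $s_\sigma\in T$ be the $\sigma$-th splitting node of $T$, so that $g(\alpha)=\bigcup_n s_{\alpha\restriction n}$ is a homeomorphism $g\colon 2^\omega\to P$ in $V$. If we manage to find an infinitely splitting subtree $S\in W$ of $T$, we are done: $W$ then contains the canonical homeomorphism $h_S\colon 2^\omega\to[S]$, and for any $\rho\in 2^\omega\cap V$ we get $h_S(\rho)\in[S]^V\subseteq[T]^V=P\subseteq\mathbb{R}^W$, whence $\rho=h_S^{-1}(h_S(\rho))\in W$ and $\mathbb{R}^W=\mathbb{R}^V$.

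First I would fix in $V$ a countable set $X\subseteq P$ which determines $T$ in a way robust to enlargements: for every $\sigma\in 2^{<\omega}$ put $g(\sigma{}^\frown 0^\infty)$ and $g(\sigma{}^\frown 1^\infty)$ into $X$. Then $X$ is dense in $P$ and the pairwise longest common initial segments of members of $X$ are precisely the splitting nodes of $T$. Since $X\subseteq P\subseteq\mathbb{R}^W$ is countable in $V$, the countable covering property yields $Y\in W$, countable in $W$, with $X\subseteq Y$. Inside $W$: $\overline Y$ contains $P$ (because $\overline X=P$), so $\overline Y$ is uncountable in $W$ --- were it countable in $W$ it would be countable in $V$ and contain the uncountable set $P$ --- and transfinite iteration of the Cantor--Bendixson derivative $\delta$ gives a non-empty perfect kernel $K\in W$ of $\overline Y$. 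Moreover $P\subseteq K$, since every point of $P$ is a limit of points of $P$ and therefore survives into $\delta^\alpha(\overline Y)$ for all $\alpha$.

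The hard part will be to descend from $K$ to a perfect subtree of $T$ that still lies in $W$: the covering set $Y$ may contain, besides $X$, arbitrary ``junk'' reals, and their initial segments and meets enlarge the tree read off from $Y$ beyond $T$, so neither $K$ nor that tree need be contained in $T$. The point I would aim to prove is that, because $X$ already pins a perfect subset of $P$ down densely, the junk can be sieved out: one works in $V$, using $P$ to steer the construction, runs a fusion/back-and-forth argument in which each approximation is replaced by a countable part of itself and the covering property is re-applied, and reaches in the limit an infinitely splitting $S\in W$ with $S\subseteq T$. Feeding this $S$ into the reduction above gives $\mathbb{R}^W=\mathbb{R}^V$. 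Controlling the junk accumulated through the successive covers, and arranging that the fusion closes off inside $W$, is what I expect to be the technical core of the argument.
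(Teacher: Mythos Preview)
Your reduction is correct: a perfect subtree $S\in W$ of $T$ would finish the proof, since $h_S$ is coded in $W$ and $[S]^V\subseteq[T]^V=P\subseteq W$. The gap is that you never construct such an $S$, and the devices you propose do not produce one. The perfect kernel $K$ of $\overline{Y}$ goes the wrong way: its tree $T_K\in W$ is a \emph{supertree} of $T$ (since $P\subseteq K$), whereas you need a \emph{subtree}. Your fusion sketch does not explain how the limit tree lands in $W$: each refinement step would have to use $P$ (available as a set only in $V$) to discard junk, but the covering property only lets you \emph{enlarge} countable subsets of $W$, never shrink a cover back down to its intersection with $P$. Iterating covers accumulates more junk, not less, and nothing in your outline forces the construction to close off inside $W$.

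The paper avoids this problem by not seeking a subtree of $T$ in $W$ at all. Having covered a countable dense $X\subseteq P$ by a countable $D\in W$, it uses the $W$-enumeration of $D$ to define a continuous map $f:2^\omega\setminus D\to[\mathbb{N}]^\omega$, recording for each $x$ how fast the $d_n$ approximate it. Since $f$ is coded in $W$ and each $x\in P$ lies in $W$, the image $f''[P\setminus D]$ is contained in $W$; a short argument in $V$, using only that $P$ is perfect and $D\cap P$ is dense in $P$, shows this image is superperfect. Corollary~\ref{beta} then finishes: every real in $V$ equals $o(x,y,z)$ for some triple from this image, and $o$ is absolute. The key move you are missing is that one does not need the tree of $P$ (or any subtree of it) inside $W$; a single $W$-coded function that spreads $P$ out into a superperfect set suffices, and it is the oscillation coloring, not a homeomorphism with $2^\omega$, that converts this into $\mathbb{R}^W=\mathbb{R}^V$. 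Your route tries to bypass the oscillation machinery, and that is precisely why it stalls.
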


\begin{proof} Work in $V$ and fix a perfect subset $P$ of $(2^{\omega})^{W}.$
Let $X$ be a countable dense subset of $P$. By the countable covering property for
the reals we can cover $X$ by some set $D\in W$ such that $D$ is countable
in $W$, is a dense subset of $2^{\omega}$ and $D\cap P$ is dense in $P$.
In $W$, fix an enumeration
$\{ d_n ; n <\omega \}$ of $D$. For $x,y\in 2^\omega$ with $x\neq y$ let
$$
\sdiff (x,y)=\min \{ n : x(n)\neq y(n)\}.
$$
Given $x\in 2^{\omega}\setminus D$ first define a sequence $\langle k_x(n); n <\omega\rangle$
by induction as follows
$$
k_x(n)= \min \{ k : \sdiff(x,d_k) > \sdiff(x,d_{k_x(i)}), \mbox{ for all } i <n\}.
$$
Note that $k_x(0)=0$. Since $D$ is dense in $P$ and $x\in P\setminus D$ then
$k_x(n)$ is defined, for all $n$.
Now define $f:P\setminus D\to [\mathbb{N}]^{\omega}$ by setting
$$
f(x)(n)=\sdiff(x,d_{k_x(n)}).
$$
Clearly, $f$ is continuous and
$f(x)$ is a strictly increasing function, for all $x\in 2^{\omega}\setminus D$.
Since $D\in W$ then $f$ is coded in $W$. We can now prove that $f''[P\setminus D]$ is superperfect.
Let $T=\{f(x)\restr n : x\in P\setminus D\land n\in\omega\}$.
First note that $f''[P\setminus D]$ is closed, i.e. it is equal to $[T]$.
To see this, note that if $b\in [T]$, then for every $i$ there is
$x_i\in P\setminus D$ such that $b\restriction i =f(x_i)\restriction i$.
Since $P$ is compact, it follows that the sequence $(x_i)_i$ converges to some $x\in P$.
Note then $k_x(n)=k_{x_m}(n)$, for all $m >n$, in particular, $k_x(n)$ is defined, for all $n$.
It follows that $x\notin D$. Since $f(x)=b$ it follows that
$b\in f''[P\setminus D]$, as desired.

Now, we show that every node of $T$ is $\infty$-splitting.
Let $s\in T$ and suppose $n=|s|$. Then there is some $x\in P\setminus D$ such that
$s\sqsubseteq f(x)$. Therefore, $s(i)=\sdiff(x,d_{k_x(i)})$, for all $i<n$.
Let $l=k_x(n)$. Since $P$ is perfect we can find, for every $j\geq \sdiff(x,d_l)$, some
$x_j\in P\setminus D$ such that $\sdiff(x_j,d_l)\geq j$. It follows
that $f(x_j)\restriction n =s$ and $f(x_j)(n)\geq j$. This shows
that $s$ is $\infty$-splitting.

Since $P\subseteq W$ and $f$ is coded in $W$ we have $f''[P\setminus D]\subseteq W$,
 that is $W$ contains a superperfect set.
 By Corollary \ref{beta}, we have $\mathbb{R}^{W}=\mathbb{R}^{V}$ and this completes the proof.
\qed
\end{proof}

\begin{proof}(of {\bf Theorem \ref{alpha}}).
By the previous theorem, it is enough to prove that $(W,V)$ satisfies
the countable covering property for the reals. By assumption, $W$ satisfies $\mathrm{CH}$,
so we can fix in $W$ a well-ordering on $(\mathbb{R})^W$ of height $(\omega_1)^W$.
Since every perfect set is uncountable and $P\subseteq W,$ then ${\omega_1}^W={\omega_1}^V$.
Therefore, any $X\subseteq (\mathbb{R})^W$ which is countable in $V,$ is contained in a proper
initial segment $Y$ of the well-ordering. Then $Y\in W$ and $Y$ is countable in $W.$ This completes the proof.
\qed
\end{proof}

In particular we can state the following corollary.

\begin{corollary}[\cite{Groszek}] If there is a perfect set of constructible reals,
then $\mathbb{R}\subseteq L$.
\qed
\end{corollary}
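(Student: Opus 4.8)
The plan is to instantiate Theorem~\ref{alpha} with the inner model $W=L$ and $V$ the ambient universe of set theory. First I would observe that $L\subseteq V$ is a model of $\mathrm{ZFC}$, and that, by G\"odel's theorem, $\mathrm{CH}$ holds in $L$; indeed $L$ satisfies $\mathrm{GCH}$. So the hypothesis ``$\mathrm{CH}$ holds in $W$'' of Theorem~\ref{alpha} is met with $W=L$.

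Next I would unpack the hypothesis of the Corollary. ``There is a perfect set of constructible reals'' means precisely that there is a perfect set $P$ (in $V$) with $P\subseteq L$. After fixing a recursive homeomorphism between $\mathbb{R}$ and $2^{\omega}$ (or whichever copy of the reals Theorem~\ref{alpha} is phrased for), this is exactly the statement that there is a perfect set $P$ in $V$ with $P\subseteq W$, where $W=L$. Thus both hypotheses of Theorem~\ref{alpha} are satisfied.

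Applying Theorem~\ref{alpha} we conclude $\mathbb{R}^{L}=\mathbb{R}^{V}$, that is, every real belongs to $L$, which is the desired conclusion $\mathbb{R}\subseteq L$.

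There is essentially no obstacle here beyond bookkeeping: the only points deserving a word are that $L$ is an honest inner model closed enough for Theorem~\ref{alpha} to apply (true for $L$), and that $\mathrm{CH}$ in $L$ is exactly what licenses the use of the Groszek--Slaman theorem rather than the a priori stronger Veli\v{c}kovi\'{c}--Woodin theorem, which would require a \emph{superperfect} set of constructible reals. The content is entirely carried by Theorem~\ref{alpha}; the corollary is just the special case $W=L$. \qed
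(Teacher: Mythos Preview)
Your argument is correct and is exactly the intended one: the paper marks the corollary with an immediate \qed, since it is the special case $W=L$ of Theorem~\ref{alpha}, using that $L\models \mathrm{CH}$. There is nothing to add.
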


Is the countable covering condition necessary to obtain this result? Theorem \ref{gamma}
below (see \cite{Velickovic}) gives a partial answer to this question.

\begin{theorem}[\cite{Velickovic}]\label{gamma} There is a pair $(W,V)$ of generic extensions of $L$ with
$W\subseteq V$, such that $\aleph_1^{W}=\aleph_1^{V}$ and $V$ contains a perfect set of $W$-reals,
 but $\mathbb{R}^{W}\neq \mathbb{R}^{V}$.
\end{theorem}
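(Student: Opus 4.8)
The plan is to obtain $(W,V)$ as a two-step forcing extension $L\subseteq W\subseteq V$. Note first that $W$ cannot satisfy $\mathrm{CH}$: otherwise, since $V$ is to contain a perfect set of $W$-reals, Theorem \ref{alpha} would give $\mathbb{R}^W=\mathbb{R}^V$, contrary to what we want. So the first step of the iteration should force $2^{\aleph_0}=\aleph_2$ in $W$ while supplying a rich, generic stock of reals for later use; concretely I would take $W=L[\langle c_\alpha:\alpha<\aleph_2\rangle]$, where $\langle c_\alpha:\alpha<\aleph_2\rangle$ is an $L$-generic sequence of mutually Cohen reals.

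For the second step I want a poset $\mathbb{Q}\in W$, with $V=W[g]$ for a $\mathbb{Q}$-generic filter $g$, enjoying three properties: (i) $\mathbb{Q}$ adds a new real -- this alone gives $\mathbb{R}^W\neq\mathbb{R}^V$; (ii) $\mathbb{Q}$ preserves $\aleph_1$; (iii) in $V$ there is a perfect tree $T^\ast$ all of whose branches \emph{as computed in $V$} are members of $\mathbb{R}^W$. Granting such a $\mathbb{Q}$, the triple $(W,V,[T^\ast])$ witnesses the theorem. Two constraints guide the choice of $\mathbb{Q}$. First, by Theorem \ref{theta} the pair $(W,V)$ must fail the countable covering property for the reals, so $\mathbb{Q}$ cannot be ccc: in a ccc extension any name for a countable set of ground-model reals is confined to a ground-model countable set, so every ccc extension has the countable covering property for the reals. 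Second, $T^\ast$ cannot lie in $W$, and cannot carry a $W$-coded homeomorphism onto $2^\omega$: if $h\colon 2^\omega\to[T^\ast]$ were such a homeomorphism coded in $W$ and $r\in\mathbb{R}^V\setminus\mathbb{R}^W$, then $h(r)$ would be a branch of $T^\ast$ in $V$ lying outside $W$, contradicting (iii). So $\mathbb{Q}$ must \emph{manufacture} $T^\ast$ from scratch inside $V$ out of the raw reals $\langle c_\alpha\rangle$, rather than recycle a ground-model perfect set.

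Accordingly I would look for $\mathbb{Q}$ whose conditions are finite objects specifying at once a finite binary splitting tree, a finite amount of data attaching to each node of it a ground-model real (meant to be the branch-real along that node) together with a finite approximation to that real, and a finite piece of a side real $d$; the generic then produces the perfect tree $T^\ast$, whose branches are the attached reals, together with the new real $d$. The hard part -- the heart of the construction -- is to tune the notion of condition and an associated fusion so that (i), (ii) and (iii) all hold: the delicate point is that $\mathbb{Q}$ adds a new real $d$, so one must ensure that the perfect tree $T^\ast$ nonetheless attaches genuinely \emph{old} reals to its branches even though the attachment is generic, and that these reals are pairwise distinct. Beyond that the main work is to show $\mathbb{Q}$ preserves $\aleph_1$; I expect this to need a fusion argument on the tree parts of the conditions, using the mutual genericity of the $c_\alpha$'s to absorb potential names for new $\omega_1$-sequences. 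Once $\aleph_1$-preservation and the old-ness of the branch-reals are secured, the remaining verifications are routine. The full construction is carried out in \cite{Velickovic}.
\qed
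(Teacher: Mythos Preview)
The paper itself gives no proof of this theorem: it merely states the result and refers the reader to \cite{Velickovic}. Your proposal therefore already goes further than the paper does, by outlining the shape such a construction must take.

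Your heuristic analysis is correct and worth recording. From Theorem~\ref{alpha} you rightly conclude that $W$ must fail $\mathrm{CH}$; from Theorem~\ref{theta} you correctly deduce that the pair $(W,V)$ must fail the countable covering property for the reals, and hence that the second-step forcing cannot be ccc; and you identify the essential obstruction, namely that the perfect tree $T^\ast$ cannot lie in $W$ nor carry a $W$-coded parametrization, so it must be manufactured generically. These constraints do match the construction in \cite{Velickovic}.

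Where your sketch is genuinely incomplete is in the description of $\mathbb{Q}$. The phrase ``finite objects specifying a finite binary splitting tree with attached ground-model reals and a finite piece of a side real'' is suggestive but not a definition, and the two hard points you name---that the generic tree has only $W$-branches even though the forcing adds a new real, and that $\aleph_1$ is preserved---are precisely the content of the argument, not something one can wave at with ``fusion'' and ``mutual genericity''. Since you explicitly defer to \cite{Velickovic} for the actual construction, this is acceptable as a motivated summary, and it is more than the present paper offers; but it should be understood as an annotated reference rather than a proof.
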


On the other hand, in \cite{Velickovic} we have also the following theorem.

\begin{theorem}[\cite{Velickovic}]\label{delta} Suppose that $M$ is an inner model
of set theory and $\mathbb{R}^{M}$ is analytic,
then either $\aleph_1^{M}$ is countable, or all reals are in $M$.
\end{theorem}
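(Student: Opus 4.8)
The plan is to argue by contradiction. Suppose $M$ is an inner model, $\mathbb{R}^M$ is analytic (as a subset of $2^\omega$ in $V$), $\aleph_1^M$ is uncountable in $V$, and yet $\mathbb{R}^M \neq \mathbb{R}^V$. Since $\aleph_1^M$ is uncountable, $M$ correctly computes $\aleph_1$, so there is a perfect set of $M$-reals is exactly what we must rule out by Corollary \ref{beta}; hence the real content is to show that an uncountable analytic set $\mathbb{R}^M$ with $\aleph_1^M = \aleph_1^V$ must contain a superperfect subset lying in $M$. The classical dichotomy for analytic sets (the perfect set property) gives us that $\mathbb{R}^M$, being uncountable and analytic, contains a perfect set $P$ in $V$; but $P$ need not lie in $M$, so this alone is not enough — we need to exploit the analytic representation to produce a superperfect set that is definable, and hence present, inside $M$.

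First I would fix in $M$ a tree $S$ on $\omega \times \omega$ (or on $2 \times \omega$) such that $\mathbb{R}^M = p[S] = \{x : \exists f\, (x,f) \in [S]\}$, the projection of the body of $S$. Because $\mathbb{R}^M$ is uncountable in $V$ and $\aleph_1^M = \aleph_1^V$, a Shoenfield-style absoluteness argument should show that $S$ is "uncountable" in $M$ as well — more precisely, the set $p[S]$ computed in $M$ is uncountable in $M$, since if $M$ thought $p[S]$ were countable it would have an injection $\omega_1^M \to$ (a countable set), impossible, and $\Sigma^1_1$ statements about $S$ are absolute between $M$ and $V$. So inside $M$ we have an uncountable analytic set, and by the effective perfect set theorem (relativized to $M$) there is, in $M$, a perfect subset $Q$ of $\mathbb{R}^M$. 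The point is that $Q \in M$ and $Q \subseteq \mathbb{R}^M$.

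Now I would invoke the machinery of Theorem \ref{theta}: given the perfect set $Q \subseteq \mathbb{R}^M$ with $Q \in M$, I want to run the $\sdiff$-coding construction \emph{inside $M$} — choosing a countable dense $D \subseteq Q$ with $D \in M$, forming the continuous map $f$ sending $x \in Q \setminus D$ to its sequence of splitting levels against the enumeration of $D$, and concluding that $f''[Q\setminus D]$ is a superperfect subset of $[\mathbb{N}]^\omega$ coded in $M$. Since all of this is carried out within $M$, we get a superperfect set $X \in M$ with $X \subseteq \mathbb{R}^M$. Applying Corollary \ref{beta} (whose statement is absolute: $o''[X]^3 \supseteq 2^\omega$ holds in $V$ for the superperfect $X$), every real of $V$ appears as an oscillation value $o(x,y,z)$ for $x,y,z \in X \subseteq \mathbb{R}^M$, hence every real of $V$ is computed from reals of $M$ and lies in $M$. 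Thus $\mathbb{R}^V \subseteq \mathbb{R}^M$, contradicting $\mathbb{R}^M \neq \mathbb{R}^V$.

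The main obstacle is the absoluteness bookkeeping in the middle step: one must be careful that the analytic set $\mathbb{R}^M$, presented in $V$ by some tree, is presented in $M$ by a tree whose projection $M$ also sees as uncountable — this is where $\aleph_1^M = \aleph_1^V$ is essential, since without it $M$ could (wrongly from $V$'s perspective) believe $\mathbb{R}^M$ is a countable union of pieces or believe the relevant tree has no perfect subtree. I would handle this via Shoenfield absoluteness together with the fact that "there is a perfect subtree of $S$" is itself $\Sigma^1_2$ (hence absolute), so $M$ must witness it once $V$ does; the remaining work — extracting the superperfect set from the perfect set via the $\sdiff$-construction and verifying $\infty$-splitting — is exactly the argument of Theorem \ref{theta}, now merely relativized to $M$, and needs no new idea.
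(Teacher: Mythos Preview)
Your approach has a genuine gap at the very first step. You write ``fix in $M$ a tree $S$'' with $p[S]=\mathbb{R}^M$, but the hypothesis only provides such a tree in $V$: the real coding $S$ may lie in $V\setminus M$, and nothing in the setup rules this out. Every absoluteness argument you invoke thereafter---Shoenfield for ``$p[S]$ contains a perfect set'', upward $\Pi^1_1$-absoluteness for ``$[Q]\subseteq p[S]$''---uses $S$ as a parameter and so only transfers between $M$ and $V$ when $S\in M$. You flag ``absoluteness bookkeeping'' as the main obstacle, but you locate it in the wrong place: the difficulty is not whether $M$ sees $p[S]$ as uncountable (that would indeed follow once $S\in M$), but whether \emph{any} analytic code for $\mathbb{R}^M$ lives in $M$ at all. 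Without this you cannot produce a perfect tree $Q\in M$ whose $V$-branches are all $M$-reals, and Theorem~\ref{gamma} shows that merely having a perfect set of $M$-reals coded in $V$, together with $\aleph_1^M=\aleph_1^V$, does not suffice to conclude $\mathbb{R}^M=\mathbb{R}^V$.

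The paper sidesteps this by never attempting to transfer statements between $M$ and $V$. It proves the purely $V$-internal Lemma~\ref{epsilon}: if $A$ is analytic with $\sup\{\omega_1^{{\rm CK},x}:x\in A\}=\omega_1$, then every real is $\Delta^1_1$ in four elements of $A$. The argument forces over $V$ with Levy collapses, builds an $\omega_1^V$-superperfect family of interpretations of a name for a cofinal $\omega$-sequence in $\omega_1^V$, applies the generalized oscillation $o_{\lambda}$ to code an arbitrary real into $L[x_1,x_2,x_3]$ with $x_i\in A$, and then reflects a $\Sigma^1_2(r,T)$ statement back to $V$---all with the tree $T$ safely available as a parameter, since the absoluteness is between $V$ and its own generic extensions. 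Theorem~\ref{delta} then follows because $\aleph_1^M=\aleph_1^V$ guarantees the admissibility hypothesis for $A=\mathbb{R}^M$, and inner models are closed under $\Delta^1_1$-in.
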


In order to prove Theorem \ref{delta}, let us introduce a generalization of the notion of a superperfect set.

\begin{definition} Suppose $\lambda$ is a limit ordinal and $T$
is a subtree of $[\lambda]^{<\omega}$.
We say that $t\in T$ is \emph{$\lambda$-splitting} if for all $\xi <\lambda$
 there exists $u\in T$ such that $t\sqsubseteq u$
and $u(|t|)>\xi$.
\end{definition}

\begin{definition} Suppose $\lambda$ is a limit ordinal and let $T$ be
a subtree of $[\lambda]^{<\omega}$.
We say $T$ is \emph{$\lambda$-superperfect} if for all $s\in T$ there exists
$t\in T$ such that $s\sqsubseteq t$ and $t$ is $\lambda$-splitting.
\end{definition}

\begin{definition}\label{superperfect} A set $P\subseteq [\lambda]^{\omega}$ is \emph{$\lambda$-superperfect}
if there is a $\lambda$-superperfect tree $T\subseteq [\lambda]^{<\omega}$
such that $P=\{x\in [\lambda]^{\omega} : \forall n<\omega (x\restriction n \in T)\}$.
Here $x\restriction n$ denotes the set of the first $n$ elements of $x$ in the natural order.
\end{definition}

The definition of $o:([\mathbb{N}]^\omega)^3\to \{0,1\}^{\omega}$ can be trivially generalized
to a coloring
$$
o_{\lambda}:([\lambda]^{\omega})^3\to \{0,1\}^{\omega}.
$$
As for $o$ one can easily check that for all $\lambda$-superperfect $P$,
we have $o_{\lambda}''[P^3]\supseteq \{0,1\}^{\omega}$
(the proof is the same as for Theorem \ref{continuouscoloring}).
Moreover, we have $o_{\lambda}(x,y,z)\in L[x,y,z],$ for all $x,y,z\in [\lambda]^{\omega}$.
To complete the proof of Theorem \ref{delta} it suffices
to prove the following lemma.

\begin{lemma}\label{epsilon} Suppose that $A$ is an analytic set such that
$\sup \{ \omega _1^{{\rm CK},x}: x\in A \}=\omega _1$. Then every real
is hyperarithmetic in a quadruple of elements of $A$.
\end{lemma}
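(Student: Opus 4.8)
The plan is to reduce the statement to Corollary \ref{beta} (or rather its $\lambda$-superperfect analogue, $o_\lambda''[P^3]\supseteq\{0,1\}^\omega$) by producing, inside $L[\vec x]$ for a suitable triple $\vec x$ of elements of $A$, a $\lambda$-superperfect set whose branches are all of the form (something built from elements of $A$). Concretely, fix a real $r$ we wish to capture; we want to find $x_0,x_1,x_2\in A$ such that $r\in L[x_0,x_1,x_2,x_3]$ for some fourth $x_3\in A$. The idea is that since $A$ is analytic, $A=p[U]$ for some tree $U$ on $\omega\times\omega^{<\omega}$ (or on $\omega\times\kappa$), so membership in $A$ is witnessed by a branch through an associated tree; and the hypothesis $\sup\{\omega_1^{\mathrm{CK},x}:x\in A\}=\omega_1$ says that the ranks of well-founded sections of the Shoenfield-type tree are cofinal in $\omega_1$. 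This cofinality is exactly what lets us run a tree-building argument of unbounded ``height.''

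First I would set up the relativized tree: for a parameter $x$ coding $A$ as $\Sigma^1_1$, and for each $y$, let $T^x_y$ be the recursive-in-$(x,y)$ tree on $\omega$ whose well-foundedness is equivalent to $y\in A$, with the well-founded ones having ordinal rank $<\omega_1^{\mathrm{CK},x,y}$. The hypothesis gives, for every countable ordinal $\beta$, some $y\in A$ with $\omega_1^{\mathrm{CK},y}>\beta$, hence plenty of room to diagonalize. Next I would build, by recursion on $n$, three sequences $\langle s_n\rangle,\langle t_n\rangle,\langle u_n\rangle$ of finite increasing sequences of ordinals below $\omega_1$, together with witnesses from $A$, arranged so that at stage $n$ we force a prescribed oscillation pattern $\alpha(n)\in\{0,1\}$ — copying verbatim the gap-placement of clauses (5)–(6) in the proof of Theorem \ref{continuouscoloring}, but now the ``$\infty$-splitting'' is replaced by $\omega_1$-splitting, which is available precisely because ordinals of the relevant Church–Kleene type are unbounded in $\omega_1$. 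Choosing at each step the witnessing element of $A$ with large enough $\omega_1^{\mathrm{CK}}$ shows the resulting branches $x=\bigcup s_n$, $y=\bigcup t_n$, $z=\bigcup u_n$ lie in a set $P\subseteq[\omega_1]^\omega$ that is $\omega_1$-superperfect, and $P$ (together with the tree witnessing it) is definable from countably many elements of $A$, hence from a single element $x_3\in A$ by a coding argument. Applying the generalized Corollary \ref{beta} to $o_{\omega_1}$ restricted to $P^3$ then yields that $o_{\omega_1}(x,y,z)$ ranges over all of $\{0,1\}^\omega$; since $o_{\omega_1}(x,y,z)\in L[x,y,z]$ and $x,y,z$ come from $P\subseteq L[x_3]$-definable data with the branches being elements of $A$, every real is computed in $L[x_0,x_1,x_2,x_3]$ for appropriate $x_i\in A$, i.e. is hyperarithmetic in a quadruple from $A$ (the fourth coordinate absorbing the parameter coding $A$ and the enumeration used in the recursion).

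I expect the main obstacle to be the bookkeeping that makes the branches genuinely lie in $A$ while simultaneously witnessing unbounded splitting: one must interleave the choice of $x_i$-approximations with the choice of the $A$-witnesses so that the limit reals are in $A$, not merely in some superperfect overset, and one must check that the ``fourth element'' can indeed encode all the side information (the $\Sigma^1_1$ code for $A$, the scale/rank witnesses, and the recursive enumeration driving the construction) as a single hyperarithmetic parameter from $A$. The analogy with Theorem \ref{theta} — where a continuous, $W$-coded map sends a perfect set into a superperfect set — is the right template: here the continuous map is implicit in the assignment $\alpha\mapsto(x_y,y_y,z_y)$, and the role of the countable covering property is played by the cofinality-of-Church–Kleene-ordinals hypothesis, which is exactly what guarantees the tree we build is $\omega_1$-superperfect rather than merely $\aleph_1^M$-superperfect for a collapsed $\aleph_1^M$.
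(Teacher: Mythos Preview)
Your proposal has the right intuition---build an $\omega_1$-superperfect set and apply the oscillation coloring---but it misses the mechanism that actually makes the argument go through, and in two places the types do not match.

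First, the branches $x=\bigcup s_n$, $y=\bigcup t_n$, $z=\bigcup u_n$ you construct are elements of $[\omega_1]^\omega$, not reals, and certainly not elements of $A$. You say both that they ``lie in a set $P\subseteq[\omega_1]^\omega$'' and later that they are ``elements of $A$''; these cannot both hold. What is needed is a \emph{link} of the form ``each branch $s$ of $P$ lies in $L[x]$ for some $x\in A$.'' The paper obtains this link by forcing: collapse $\aleph_1$ with $\mathcal P=\mathrm{Coll}(\aleph_0,\aleph_1)$; in the extension there is $x\in p[T]$ with $\omega_1^{\mathrm{CK},x}>\omega_1^V$, hence a cofinal $\sigma\in(\omega_1^V)^\omega$ with $\sigma\in L[x]$. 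Names $\dot x,\sigma$ for these objects live in $V$, and one shows that the set of interpretations $\sigma_G$ (as $G$ ranges over $V$-generics, viewed inside a further collapse $\mathrm{Coll}(\aleph_0,\aleph_2)$) contains an $\omega_1^V$-superperfect set. The ``$\infty$-splitting'' you want comes not from unboundedness of Church--Kleene ordinals per se, but from the fact that $\sigma$ is forced to be cofinal in $\omega_1^V$ (this is the content of the paper's Claim~1).

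Second, your plan to carry out the construction directly in $V$ and then ``code the tree by a single $x_3\in A$'' cannot work as stated: an $\omega_1$-superperfect subtree of $[\omega_1]^{<\omega}$ has $\aleph_1$ nodes, so it is not coded by a real, let alone by an element of $A$. The paper sidesteps this by doing the entire construction in a generic extension and then observing that the \emph{conclusion} ``there exist $x_1,x_2,x_3,y\in p[T]$ with $r\in\Delta^1_1(x_1,x_2,x_3,y)$'' is $\Sigma^1_2(r,T)$, hence reflects to $V$ by Shoenfield absoluteness. The forcing is not a convenience; it is what replaces the impossible ``code the whole tree into one real'' step.

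Finally, the role of the fourth element is not to absorb a parameter coding $A$. Once one has $r\in L[x_1,x_2,x_3]$ (via $o_{\omega_1^V}$ and the link $s_{b_i}\in L[x_i]$), one picks $\delta$ with $r\in L_\delta[x_1,x_2,x_3]$ and then chooses $y\in A$ with $\omega_1^{\mathrm{CK},y}>\delta$; this is what upgrades ``constructible from'' to ``hyperarithmetic in.''
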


\begin{proof} Let $T \subset (\omega \times \omega)^{<\omega}$
be a tree such that $A=p[T]$. Note that the statement
 sup$\{ \omega _1^{{\rm CK},x}:x\in p[T]\}=\omega_1$ is $\Pi _2^1(T)$ and
thus absolute.

For an ordinal $\alpha$ let $Coll(\aleph _0,\alpha)$
be the usual collapse of $\alpha$ to $\aleph _0$ using finite
conditions.
Let ${\cal P}$ denote $Coll(\aleph _0,\aleph _1)$.
 If $G$ is $V$-generic over ${\cal P}$,
by Shoenfield's absoluteness theorem,
in $V[G]$ there is $x \in p[T]$ such that $\omega _1^{{\rm CK},x}>\omega _1^V$.
In $V$ fix a name $\dot{x}$ for $x$ and a name $\sigma$ for a cofinal
$\omega$-sequence in $\omega _1^V$ such that the maximal condition
in ${\cal P}$ forces that $\dot{x}\in p[T]$ and $\sigma \in L[\dot{x}]$.


\begin{cl}\label{one} For every $p \in {\cal P}$ there is $k<\omega$
such that for every $\alpha <\omega _1$ there is $q\leq p$ such that
$q\forces \sigma (k)>\alpha$.
\end{cl}

\begin{proof} Assume otherwise and fix $p$ for which the claim is false.
Then for every $k$ there is $\alpha _k<\omega _1$ such that
$p \forces \sigma (k)<\alpha _k$. Let $\alpha =\sup \{ \alpha _k:k<\omega \}$.
Then $p\forces \mbox{ran}(\sigma)\subset \alpha$, contradicting the fact
that $\sigma$ is forced to be cofinal in $\omega _1^V$.
\qed
\end{proof}

Let ${\cal Q}$ denote $Coll(\aleph _0,\aleph _2)$ as defined in $V$.
 Suppose $H$ is $V$-generic over ${\cal Q}$.
Work for a moment in $V[H]$. If $G$ is a $V$-generic filter over ${\cal P}$
let $\sigma _G$ denote the interpretation of $\sigma$ in $V[G]$.
Let $B$ be the set of all $\sigma _G$ where $G$ ranges over all $V$-generic
filters over ${\cal P}$.

\medskip
\begin{cl}\label{two} $B$ contains an $\omega _1^V$-superperfect set
 in $(\omega _1^V)^{\omega}$.
\end{cl}

\begin{proof} Let $\{ D_n:n<\omega \}$ be an enumeration of all dense subsets
of ${\cal P}$ which belong to the ground model.   For each
 $t\in (\omega _1^V)^{<\omega}$
we define a condition $p_t$ in the regular open algebra of ${\cal P}$
as computed in $V$ and $s_t\in (\omega _1^V)^{<\omega}$ inductively on the length
of $t$ such that

\begin{enumerate}
\item $p_t\in D_{lh(t)}$
\item $p_t \forces s_t \subset \sigma$
\item if $t\subseteq r$ then $p_r\leq p_t$ and $s_t \subset s_r$
\item if $t$ and $r$ are incomparable then $s_t$ and $s_r$ are incomparable
\item for every $t$ the set $\{ \alpha <\omega_1^V: \mbox{there is}\ q \leq p\ q\forces s_t\wh \alpha \subset \sigma\}$
is unbounded in $\omega _1^V$.
\end{enumerate}

Suppose $p_t$ and $s_t$ have been defined. Using 5. choose in $V$
a 1-1 order preserving function $f_t:\omega _1^V\rightarrow \omega _1^V$
and for every $\alpha$  $q_{t,\alpha}\leq p_t$ such that
 $q_{t,\alpha}\forces s_t\wh f_t({\alpha})\subset \sigma$.
By extending $q_{t,\alpha}$ if necessary, we may assume that it belongs to $D_{lh(t)+1}$.
Now, by applying Claim 1, we can find a condition $p\leq	q_{t,\alpha}$
and $k>lh(s_t)+1$ such that for some $s\in (\omega _1^V)^k$\
$p\forces s\subset \sigma$ and for every  $\gamma <\omega _1^V$
there is $q\leq p$ such that $q\forces \sigma (k)>\gamma$.
Let then $s_{t\  \wh \alpha}=s$ and $p_{t\ \wh \alpha}=p$.
This completes the inductive construction.

Now if $b\in (\omega _1^V)^{\omega}$ then $\{ p_{b\restriction n}:n<\omega\}$
generates a filter $G_b$ which is $V$-generic over ${\cal P}$.
The interpretation of $\sigma$ under $G_b$ is $s_b=\bigcup _{n<\omega}s_{b\restriction n}$.
Since the set $R=\{ s_b:b\in (\omega _1^V)^{\omega}\}$ is $\omega _1^V$-superperfect,
this proves Claim \ref{two}.
\qed
\end{proof}

Now, using the remark following Definition \ref{superperfect}, for any real $r\in \{ 0,1\}^{\omega}$
we can find $b_1,b_2,b_3 \in (\omega _1^V)^{\omega}$
 such that $r \in L[s_{b_1},s_{b_2},s_{b_3}]$.
Let $x_i$ be the interpretation of $\dot{x}$ under $G_{b_i}$.
Then it follows that $x_i\in p[T]$ and $s_{b_i}\in L[x_i]$, for $i=1,2,3$.
Thus $r\in L[x_1,x_2,x_3]$.
Pick a countable ordinal $\delta$ such that $r\in L_{\delta}[x_1,x_2,x_3]$.
Using the fact that in $V[H]$
$\sup \{ \omega _1^{{\rm CK},x}:x\in p[T]\} =\omega _1$,
we can find $y\in p[T]$ such that $\omega _1^{{\rm CK},y}>\delta$.
Then we have that $r$ is $\Delta _1^1(x_1,x_2,x_3,y)$.
Note that the statement that there are $x_1,x_2,x_3,y \in p[T]$ such that
$r\in \Delta _1^1(x_1,x_2,x_3,y)$ is $\Sigma _2^1(r,T)$. Thus for any real $r \in V$,
by Shoenfield absoluteness again, it must be true in $V$. This proves Lemma \ref{epsilon}.
\qed
\end{proof}

We complete this section by stating some related results.

\begin{theorem}[\cite{Velickovic}] There is a pair of generic extensions of
$L,$ $W\subseteq V$ such that $\mathbb{R}^W$ is an uncountable $F_{\delta}$
set in $V,$ and $\mathbb{R}^W\neq \mathbb{R}^V$.
\end{theorem}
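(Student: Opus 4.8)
The plan is to produce a pair of generic extensions $W \subseteq V$ of $L$ for which $\mathbb{R}^W$ forms an uncountable $F_\sigma$ (the statement says $F_\delta$, but the natural witness is a countable union of compact sets) set in $V$ that is nonetheless a proper subset of $\mathbb{R}^V$. The guiding principle is contrapositive to Theorem \ref{gamma}: there the perfect set of $W$-reals was upgraded to give a full real-equality failure while preserving $\aleph_1$; here I want $\mathbb{R}^W$ itself to be topologically large (an uncountable Borel set) yet still not all reals, which by Corollary \ref{beta} forces that $\mathbb{R}^W$ contains \emph{no} superperfect set, hence in particular no perfect set. So the target object $\mathbb{R}^W$ must be an uncountable $F_\sigma$ set with no perfect subset --- a ``thin'' uncountable Borel set. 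Since every uncountable analytic set has a perfect subset, $\mathbb{R}^W$ cannot be analytic; the escape is that in $V$ the set $\mathbb{R}^W$ will fail to be $\Sigma^1_1$, only $F_\sigma$ after the outer forcing collapses enough cardinals, and its uncountable-but-perfect-free character is arranged combinatorially.

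First I would set up the two-step iteration. Start in $L$. The inner model $W$ should be a generic extension of $L$ in which $\mathbb{R}^W$ is, from the vantage point of a suitable further extension, an increasing union $\bigcup_{n<\omega} K_n$ of compact (equivalently, closed bounded-branching, e.g. $\sigma$-compact subsets of $2^\omega$) sets. A clean way to do this: force over $L$ with a finite-support product or iteration that adds, for each $n$, a generic ``slalom'' or generic compact set $K_n \subseteq 2^\omega$ coded by a branch through a tree, arranged so that every real of $W$ eventually lands in some $K_n$ and each $K_n$ has empty interior and no perfect subset of $W$-reals inside it --- the latter because each $K_n$ is, in $W$, a carefully thinned-out tree (for instance a tree all of whose branches are eventually determined by a single ``generic'' parameter, so that any perfect subtree would give two mutually generic reals, contradicting a genericity/mutual-generic-incompatibility argument). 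Then let $V$ be a further generic extension of $W$ by $\mathrm{Coll}(\aleph_0,\kappa)$ for an appropriate $\kappa$ (or a Levy collapse making $\aleph_1^W$ countable would be wrong --- we want to keep things controlled; instead collapse some cardinal above so that the indexing of the $K_n$'s becomes definable and $\mathbb{R}^W$ becomes a genuine $F_\sigma$ set with an absolute code). Crucially $V$ adds new reals, so $\mathbb{R}^W \subsetneq \mathbb{R}^V$, and $\mathbb{R}^W$ stays uncountable because $|2^\omega|^W$ is uncountable and collapses only large cardinals.

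The key steps, in order: (1) describe the forcing over $L$ adding the family $(K_n)_n$ and verify in $W$ that $\mathbb{R}^W = \bigcup_n K_n$ --- this uses a density/genericity argument showing every $W$-real is captured; (2) verify each $K_n$ is closed (a branch body of a tree coded in $W$) and has no perfect subset consisting of $W$-reals --- this is where a mutual-genericity or rank argument enters, showing a perfect set would decode ``too much'' genericity; (3) pass to $V = W[G]$ for the outer collapse, check $\mathbb{R}^W \subsetneq \mathbb{R}^V$ and that $\mathbb{R}^W$ remains uncountable; (4) check that in $V$ the sequence of trees $(T_n)_n$ coding the $K_n$ is coded by a single real, so $\bigcup_n [T_n] = \mathbb{R}^W$ is honestly $F_\sigma$ in $V$ --- here the outer forcing is used precisely to make the previously ``scattered'' sequence appear as one countable object; and (5) optionally invoke Corollary \ref{beta} to double-check consistency, namely that $\mathbb{R}^W$ indeed cannot contain a superperfect set, which it doesn't since it has no perfect subset at all. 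The main obstacle is step (2): guaranteeing that the compact pieces $K_n$ are simultaneously uncountable (collectively) and perfect-set-free in $W$ requires a delicate forcing design --- one wants each $K_n$ to have size $\aleph_1$ (or at least the union to be uncountable) while every perfect subtree is killed, which is reminiscent of building an uncountable set with no perfect subset but now ``by forcing'' and coherently across infinitely many pieces; getting the genericity bookkeeping right so that $\mathbb{R}^W$ does not accidentally acquire a perfect subset (e.g. from a subforcing adding a perfect set of mutually generic reals) is the crux, and I expect it to rely on an argument that any perfect tree of $W$-reals inside $K_n$ would, by a fusion/genericity analysis, yield a condition forcing a contradiction with the way the generic was added.
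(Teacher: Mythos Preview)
The paper does not prove this theorem; it is merely stated, along with Theorems \ref{Gitik} and \ref{Caicedo}, under the heading ``We complete this section by stating some related results,'' with a citation to \cite{Velickovic} for the actual argument. So there is no paper-proof to compare against.

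That said, your proposal contains a genuine error that would make the plan unworkable. You write that Corollary~\ref{beta} ``forces that $\mathbb{R}^W$ contains no superperfect set, hence in particular no perfect set.'' This implication is backwards: every superperfect set is perfect, not the other way around, so ``no superperfect subset'' is strictly \emph{weaker} than ``no perfect subset.'' Your target object --- an uncountable $F_\sigma$ set with no perfect subset --- does not exist: an uncountable $F_\sigma$ set has an uncountable closed piece, and Cantor--Bendixson gives a perfect subset immediately. The same mistake surfaces a few lines later when you say ``$\mathbb{R}^W$ cannot be analytic \ldots\ only $F_\sigma$''; every $F_\sigma$ set is Borel and hence analytic, so this sentence is self-contradictory.

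The correct picture is exactly the opposite of what you are aiming for. In the desired pair $(W,V)$, the set $\mathbb{R}^W$ \emph{will} contain a perfect subset (this is unavoidable once it is uncountable $F_\sigma$), and that is fine: a perfect set of $W$-reals only forces $\mathbb{R}^W=\mathbb{R}^V$ under the extra hypotheses of Theorem~\ref{alpha} (CH in $W$) or Theorem~\ref{theta} (countable covering), and Theorem~\ref{gamma} already shows those hypotheses can fail. What must be avoided is a \emph{superperfect} set of $W$-reals, and the construction should be designed so that $\mathbb{R}^W$, while $\sigma$-compact in $V$, is $\leq_*$-bounded on each compact piece, blocking any superperfect subtree. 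Your step~(2), trying to kill all perfect subtrees of the $K_n$, is both impossible and unnecessary; what needs to be controlled is the growth rate, not the branching, of the reals in $W$.
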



\begin{theorem}[\cite{Gitik}]\label{Gitik}
Suppose that $W\subseteq V$ are two models of set theory, $\kappa>\omega_1^V$
and there exists  $C\subseteq [\kappa]^{\omega}$ which is a club in $V$ such that $C\subseteq W$.
 Then $\mathbb{R}^W=\mathbb{R}^V.$
\end{theorem}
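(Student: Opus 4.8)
The plan is to route everything through the $o_{\lambda}$-machinery of Section~\ref{part1}. Recall from the discussion after Definition~\ref{superperfect} that for every limit ordinal $\lambda$ and every $\lambda$-superperfect $P\subseteq[\lambda]^{\omega}$ we have $o_{\lambda}''[P^{3}]\supseteq\{0,1\}^{\omega}$, and that $o_{\lambda}(x,y,z)\in L[x,y,z]$ for all $x,y,z\in[\lambda]^{\omega}$. Hence it suffices to exhibit, working in $V$, a limit ordinal $\lambda$ and a $\lambda$-superperfect set $P\subseteq[\lambda]^{\omega}$ with $P\subseteq W$: given any $r\in\{0,1\}^{\omega}\cap V$ we then pick $x,y,z\in P$ with $o_{\lambda}(x,y,z)=r$ and conclude $r\in L[x,y,z]\subseteq W$, since $x,y,z\in W$; combined with the automatic inclusion $\mathbb{R}^{W}\subseteq\mathbb{R}^{V}$ this yields $\mathbb{R}^{W}=\mathbb{R}^{V}$. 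So the whole theorem reduces to: \emph{build a $\lambda$-superperfect set inside $W$ out of the club $C$.}

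I would take $\lambda=\omega_{1}^{V}$ and build, in $V$, an $\omega_{1}^{V}$-superperfect tree $T\subseteq[\omega_{1}^{V}]^{<\omega}$ whose branches all lie in $W$, by a recursion designed so that each branch $x$ of $T$ is of the form $a\cap\omega_{1}^{V}$ for a suitable $a\in C$ (then $x\in W$ because $a\in C\subseteq W$ and $\omega_{1}^{V}$ is an ordinal, so $P:=[T]\subseteq W$). During the recursion one maintains a coherent tower of members of $C$ against which the finite branch-approximations are checked, and one uses that $C$ is cofinal and closed under $\subseteq$-increasing $\omega$-unions to take limits; the crucial use of the hypothesis $\kappa>\omega_{1}^{V}$ is that, at each node $s$ where the construction needs a $\lambda$-splitting extension, the ``upper part'' $a\cap[\omega_{1}^{V},\kappa)$ of the members of $C$ provides enough room to produce, for every prescribed $\xi<\omega_{1}^{V}$, a member of $C$ whose $\omega_{1}^{V}$-trace extends $s$ with next value above $\xi$. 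Once $T$ and $P=[T]\subseteq W$ are in hand, the first paragraph closes the argument.

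The step I expect to be the real obstacle is exactly this construction: one must simultaneously keep $T$ genuinely $\omega_{1}^{V}$-superperfect --- cofinally many successors at every splitting node, so that the realisation argument of Theorem~\ref{continuouscoloring} (in its $o_{\lambda}$-version) can hit an arbitrary $\alpha\in\{0,1\}^{\omega}$ --- and pin each of the continuum-many branches of $T$ down, via a genuinely $W$-absolute reading, to a single member of $C$. The hypothesis $\kappa>\omega_{1}^{V}$ is indispensable here: if $\kappa=\omega_{1}^{V}$ then the set of countable limit ordinals is already a club on $[\kappa]^{\omega}$ all of whose members are ordinals and hence automatically in $W$, so the hypothesis is vacuous and the conclusion false; only for $\kappa>\omega_{1}^{V}$ is $C$ forced to contain countable sets reaching above $\omega_{1}^{V}$, and it is the portion of such sets lying in $[\omega_{1}^{V},\kappa)$ that supplies the flexibility needed to realise the splitting while keeping the branches inside $W$. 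Granting this combinatorial lemma, everything else is routine given Theorem~\ref{continuouscoloring} and the facts about $o_{\lambda}$ quoted above.
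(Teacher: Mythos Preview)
The paper does not actually prove Theorem~\ref{Gitik}; it is only stated, with attribution to \cite{Gitik}, among the ``related results'' at the end of \S\ref{part1}. So there is no proof in the paper to compare your proposal against.

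On its own merits: your reduction in the first paragraph is correct and entirely in the spirit of \S\ref{part1} --- once one has a $\lambda$-superperfect $P\subseteq W$ (for some limit $\lambda$), the $o_{\lambda}$-machinery and the remark after Definition~\ref{superperfect} finish the job exactly as you describe. The issue is that your proposal is, by your own admission, only a plan: the construction of the tree $T$ is not carried out, and you flag this yourself as ``the real obstacle''. So what you have written is a correct reduction together with an incomplete sketch of the remaining step, not a proof.

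There is also a concrete worry about the specific shape you propose. You suggest arranging that each branch $x$ of $T$ equals $a\cap\omega_{1}^{V}$ for some $a\in C$. But for a club $C\subseteq[\kappa]^{\omega}$ the set $\{a\in[\kappa]^{\omega}: a\cap\omega_{1}^{V}\in\omega_{1}^{V}\}$ is itself a club, so for club-many $a\in C$ the trace $a\cap\omega_{1}^{V}$ is an \emph{ordinal}, not a set of order type $\omega$; such traces cannot serve as branches through an $\omega_{1}^{V}$-superperfect subtree of $[\omega_{1}^{V}]^{<\omega}$. You would therefore have to work off-club inside $C$ in a controlled way, and it is not at all clear that enough $a\in C$ with ``thin'' $\omega_{1}^{V}$-trace exist to produce $\omega_{1}^{V}$-splitting everywhere. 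Your intuition that the portion of $a$ above $\omega_{1}^{V}$ is what supplies the freedom is sound, but the bookkeeping that converts this freedom into an honest $\lambda$-superperfect set with branches in $W$ is the entire content of Gitik's argument, and it is missing here.
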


\begin{theorem}[\cite{Caicedo}]\label{Caicedo}
 Suppose that $W\subseteq V$ are two models of set theory such that
 $V,W\models {\rm PFA}$ and $\aleph_2^W=\aleph_2^V.$
 Then $\mathbb{R}^W=\mathbb{R}^V,$ in fact $\pow{\omega_1}^W=\pow{\omega_1}^V.$
\end{theorem}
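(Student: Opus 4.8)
The plan is to derive Theorem \ref{Caicedo} from the Caicedo--Veli\v{c}kovi\'c coding theorem \cite{Caicedo}, which is itself a deep consequence of the (bounded) proper forcing axiom and whose proof is precisely an application of oscillations of pairs of countable ordinals, in the spirit of \S 5 (after Todor\v{c}evi\'c and Moore). Recall that PFA implies BPFA and also $2^{\aleph_0}=2^{\aleph_1}=\aleph_2$. The input I would take from \cite{Caicedo} is: for any ladder system $P=\langle C_\delta : \delta<\omega_1,\ \delta \text{ limit}\rangle$ there is a $\Delta_1$ formula $\Phi$ in the language of $(H(\omega_2),\in)$ such that BPFA implies that $\{(\alpha,A): H(\omega_2)\models\Phi(\alpha,A,P)\}$ is the graph of a bijection $G:\omega_2\to\pow{\omega_1}$. (Equivalently: BPFA implies every $A\subseteq\omega_1$ is ``coded'' by a real via $P$ in an absolutely $\Delta_1$ fashion; sorting subsets of $\omega_1$ by least code and using $2^{\aleph_1}=\aleph_2$ produces such a $G$.)

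Granting this, here is the argument. First one checks $\omega_1^W=\omega_1^V$: otherwise $\omega_1^W$ would be countable in $V$, and since $\omega_2^W=(\omega_1^W)^{+}$ as computed in $W$, every ordinal below $\omega_2^W$ injects into $\omega_1^W$ inside $W\subseteq V$ and is therefore countable in $V$, giving $\omega_2^W\le\omega_1^V<\omega_2^V$ and contradicting the hypothesis. Together with $\omega_2^W=\omega_2^V$ this shows $H(\omega_2)^W$ is a transitive substructure of $H(\omega_2)^V$, so $\Sigma_1$ formulas over $H(\omega_2)$ with parameters in $H(\omega_2)^W$ are upward absolute from $W$ to $V$ and $\Pi_1$ ones downward, whence $\Delta_1$ formulas are absolute. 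Now fix a ladder system $P\in W$; then $P\in V$ too. Applying the coding theorem in $W$ gives a bijection $G^W:\omega_2^W\to\pow{\omega_1}^W$ defined over $H(\omega_2)^W$ by $\Phi(\cdot,\cdot,P)$, and applying it in $V$ gives $G^V:\omega_2^V\to\pow{\omega_1}^V$ defined by the same formula and parameter over $H(\omega_2)^V$; the domains coincide since $\omega_2^W=\omega_2^V$. For each $\alpha<\omega_2$ the value $G^W(\alpha)$ is a subset of $\omega_1$, hence lies in $H(\omega_2)^W$, so the $\Delta_1$ statement ``$G(\alpha)=G^W(\alpha)$'' holds in $H(\omega_2)^W$ and therefore in $H(\omega_2)^V$; since $G^V$ is a function this forces $G^V(\alpha)=G^W(\alpha)$. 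Thus $G^V=G^W$, so $\pow{\omega_1}^V=\mathrm{ran}(G^V)=\mathrm{ran}(G^W)=\pow{\omega_1}^W$. As every real is a subset of $\omega\subseteq\omega_1$ and $\omega_1^W=\omega_1^V$, this also yields $\mathbb R^W=\mathbb R^V$.

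The soft ingredients here --- the computation $\omega_1^W=\omega_1^V$, the absoluteness bookkeeping, and the ``same formula, same parameter, matching domains'' transfer --- are routine. The genuine obstacle, which I would import from \cite{Caicedo} rather than reprove, is the coding theorem itself: constructing under BPFA an \emph{absolutely} $\Delta_1$-definable bijection $\omega_2\to\pow{\omega_1}$ (equivalently, a well-ordering of $\mathbb R$ that is $\Delta_1$ over $H(\omega_2)$ from a subset of $\omega_1$). One fixes the ladder system $P$ and, given a target $A\subseteq\omega_1$, designs a proper forcing adding a club together with a real $r$ so that at each relevant countable level $\delta$ the oscillation pattern attached to $r$, $P$ and $\delta$ records whether $\delta\in A$; BPFA then reflects the resulting $\Sigma_1$ assertion down, so that $A$ becomes readable from $r$ and $P$ by a formula that does not depend on the forcing. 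Verifying properness and that the decoding is genuinely $\Delta_1$ (so that $A$ and its complement each receive a $\Sigma_1$ definition) is the technical heart, and it is exactly the kind of oscillation argument for pairs of countable ordinals of \S 5; I would refer to \cite{Caicedo} for it.
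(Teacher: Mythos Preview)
The paper does not actually prove Theorem~\ref{Caicedo}; it is merely stated at the end of \S\ref{part1} under the heading ``We complete this section by stating some related results,'' alongside Theorem~\ref{Gitik} and the $F_\delta$ example, with a citation to \cite{Caicedo} and no accompanying argument. There is therefore no proof in the paper to compare your proposal against.

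That said, your sketch is a faithful outline of the argument in \cite{Caicedo}. The substantive input is indeed the coding theorem---under BPFA, a ladder system parameter yields a $\Delta_1$-over-$H(\omega_2)$ well-ordering of $\mathcal P(\omega_1)$---and once that is granted, the transfer between $W$ and $V$ proceeds exactly as you describe via $\Delta_1$-absoluteness between the transitive structures $H(\omega_2)^W\subseteq H(\omega_2)^V$. Your derivation of $\omega_1^W=\omega_1^V$ from $\omega_2^W=\omega_2^V$ is correct, and you are right to import the coding theorem rather than reprove it. One small caveat: while the coding in \cite{Caicedo} is indeed in the spirit of the oscillation techniques surveyed here, its details are closer to earlier coding-by-oscillation work than to the specific Moore $L$-space construction of \S5, so the phrase ``exactly the kind of oscillation argument \ldots\ of \S5'' slightly overstates the parallel.
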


\section{Oscillations of Real Numbers - part 2}

The results of this section are taken from \cite{Todorcevic}.
We look increasing sequences of integers and slightly change the definition of oscillation.
Given $s,t\in (\omega)^{\leq \omega}$ we define
$$
\osc(s,t)=\vert \{n<\omega :\spazio s(n)\leq t(n) \land s(n+1)>t(n+1)\}\vert.
$$

In the next picture, $s$ and $t$ are two functions in $(\omega)^{<\omega}$ with $\osc(s,t)=2.$\\

$$\setlength{\unitlength}{0.65cm}
\begin{picture}(8,7)
\put(-0.2,1){\vector(1,0){7.5}}
\put(0,0.8){\vector(0,1){6}}
\put(7.5,0.9){$\omega$}
\put(-0.5,6.6){$\omega$}
\put(-0.5,1.5){$s$}
\put(-0.5,2){$t$}
\qbezier(0,1.5)(0.6,1.5)(0.8,2.5)
\qbezier(0.8,2.5)(1,3.7)(3.7,3.7)
\qbezier(3.7,3.7)(4.7, 3.8)(5,4.7)
\qbezier(5,4.7)(5.25,5.25)(6,6)
\qbezier(0,2)(2.4, 2)(3.5, 3.5)
\qbezier(3.5,3.5)(4,4.6)(5.5,5)
\put(0.64,2.05){\circle{0.3}}
\put(5.05,4.85){\circle{0.3}}
\end{picture}
$$
We now define two orders $\leq_m$ and $\leq_*$ on $(\omega)^{\omega}$:

$$
f\leq_m g\iff \forall n\geq m( f(n)\leq g(n));
$$
$$
f\leq_* g \iff \exists m (f\leq_m g).
$$
Given $X\subseteq (\omega)^{\omega}$ and $s\in (\omega)^{<\omega}$ we let
$X_s=\{f\in X : s\sqsubseteq f\}$ and
$$
T_X=\{s\in (\omega)^{<\omega}: X_s \mbox{ is unbounded under }\leq_*\}.
$$

\begin{lemma}\label{claim} Suppose that $X\subseteq (\omega)^{\omega}$ is unbounded under
$\leq_*$ and $X=\underset{n<\omega}{\bigcup} A_n$. Then there exists $n$ such that $A_n$ is unbounded.
\end{lemma}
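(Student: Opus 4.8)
If $X \subseteq (\omega)^\omega$ is unbounded under $\leq_*$ and $X = \bigcup_{n<\omega} A_n$, then some $A_n$ is unbounded under $\leq_*$.

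The plan is to argue by contraposition: assuming that \emph{every} $A_n$ is bounded under $\leq_*$, I will construct a single $g\in(\omega)^{\omega}$ which is an upper bound for all of $X$, contradicting the hypothesis that $X$ is unbounded. So fix, for each $n<\omega$, a function $g_n\in(\omega)^{\omega}$ witnessing that $A_n$ is bounded, i.e.\ $f\leq_* g_n$ for every $f\in A_n$.

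The key step is a diagonalization. I define $g\in(\omega)^{\omega}$ by recursion on $k$, setting
$$
g(k)=\max\bigl(\{g_i(k):i\leq k\}\cup\{g(k-1)+1\}\bigr)
$$
(with the convention $g(-1)+1=0$). The term $g(k-1)+1$ guarantees that $g$ is strictly increasing, so indeed $g\in(\omega)^{\omega}$; and for each fixed $n$, every $k\geq n$ satisfies $g(k)\geq g_n(k)$, so $g_n\leq_n g$ and in particular $g_n\leq_* g$.

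It then remains to observe that $g$ bounds $X$. Given any $f\in X$, pick $n$ with $f\in A_n$; then $f\leq_* g_n$ and $g_n\leq_* g$, so $f\leq_* g$ because $\leq_*$ is transitive (if $f\leq_m g_n$ and $g_n\leq_{m'}g$, then $f\leq_{\max(m,m')}g$). Hence $g$ is an upper bound for $X$ under $\leq_*$, the desired contradiction. I do not expect a genuine obstacle here; the only point requiring a little care is the bookkeeping needed to keep the diagonal function strictly increasing, so that it lies in $(\omega)^{\omega}$ rather than merely in $\omega^{\omega}$.
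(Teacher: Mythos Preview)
Your proof is correct and follows essentially the same argument as the paper: assume each $A_n$ is $\leq_*$-bounded by some $g_n$, then diagonalize via $g(k)=\max\{g_i(k):i\leq k\}$ to obtain a single $\leq_*$-bound for $X$. The only difference is your extra term $g(k-1)+1$ to force strict monotonicity, which is in fact unnecessary, since $\max\{g_i(k):i\leq k\}$ is already strictly increasing in $k$ when each $g_i\in(\omega)^{\omega}$.
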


\begin{proof} Suppose that every $A_n$ is bounded and, for all $n,$ let $g_n$
be such that $f\leq_* g_n,$ for all $f\in A_n.$ If we define
$g(n)=\sup \{g_k(n) : k\leq n\},$ then $X$ is bounded by $g$ with respect to $\leq_*$.
This leads to a contradiction. \qed
\end{proof}

\begin{lemma}\label{aaaaa} Suppose that $X\subseteq (\omega)^{\omega}$ is unbounded under $\leq_*$.
Then $T_X$ is superperfect.
\end{lemma}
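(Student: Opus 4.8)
The plan is to show that $T_X$ is a subtree of $(\omega)^{<\omega}$ and that every node of $T_X$ can be extended within $T_X$ to an $\infty$-splitting node; in fact I expect that every node of $T_X$ is already $\infty$-splitting, which is even stronger. First I would check that $T_X$ is closed under initial segments: if $s \sqsubseteq t$ and $X_t$ is unbounded under $\leq_*$, then $X_t \subseteq X_s$, so $X_s$ is unbounded as well. The nonemptiness of $T_X$ (i.e. $\emptyset \in T_X$) is exactly the hypothesis that $X$ itself is unbounded under $\leq_*$.

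Next I would fix $s \in T_X$ with $|s| = n$ and verify that $s$ is $\infty$-splitting. Suppose not: then there is a bound $k$ such that for every $t \in T_X$ with $s \sqsubseteq t$ and $|t| = n+1$ we have $t(n) \leq k$. Equivalently, writing $X_s^j = \{ f \in X_s : f(n) = j \}$ for each $j$, only finitely many of the sets $X_s^0, X_s^1, \dots, X_s^k$ lie in $T_X$, and none of the $X_s^j$ for $j > k$ does. But $X_s = \bigcup_{j} X_s^j$, and since $X_s$ is unbounded under $\leq_*$, Lemma \ref{claim} gives some $j$ with $X_s^j$ unbounded under $\leq_*$. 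If $f \in X_s^j$ then $s {}^{\frown} j \sqsubseteq f$ (using $j > s(n-1)$, which holds for all but finitely many $f$; those can be thrown away without affecting unboundedness), so $X_{s {}^{\frown} j} \supseteq X_s^j$ up to a finite set and is therefore unbounded, i.e. $s {}^{\frown} j \in T_X$ with $(s {}^{\frown} j)(n) = j$. If $j > k$ this already contradicts the choice of $k$; and the witness $j$ produced by Lemma \ref{claim} can be taken arbitrarily large, since discarding the finitely many small indices $\{0, \dots, k\}$ leaves $X_s$ still written as a countable union of the remaining $X_s^j$, so some $X_s^j$ with $j > k$ is unbounded. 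This contradiction shows $s$ is $\infty$-splitting.

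The only mild subtlety — and the step I would be most careful with — is the bookkeeping around the requirement that the successor $j$ of $s$ in $(\omega)^{<\omega}$ must exceed $\max s = s(n-1)$, so that $s {}^{\frown} j$ is genuinely an increasing sequence. This is handled by the observation that for any fixed $f$, $f(n) > f(n-1) = s(n-1)$, so $X_s^j$ is empty for $j \leq s(n-1)$ anyway; thus the decomposition of $X_s$ into the pieces $X_s^j$ only ever involves admissible values of $j$, and Lemma \ref{claim} applied to this decomposition automatically returns an admissible $j$, as large as we please. Once $\infty$-splitting of every node is established, superperfectness is immediate (taking $t = s$ in the definition), which completes the proof.
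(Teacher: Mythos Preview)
Your proof has a genuine gap: the stronger claim that every node of $T_X$ is $\infty$-splitting is false. Take $X=\{f\in(\omega)^\omega: f(0)=5\}$; this $X$ is $\leq_*$-unbounded, so $\emptyset\in T_X$, but the only immediate successor of $\emptyset$ in $T_X$ is $\langle 5\rangle$, so $\emptyset$ is not $\infty$-splitting (though $\langle 5\rangle$ is, which is all superperfectness requires). The error lies in the sentence ``discarding the finitely many small indices $\{0,\dots,k\}$ leaves $X_s$ still written as a countable union of the remaining $X_s^j$'': discarding those pieces gives $\{f\in X_s: f(n)>k\}$, a \emph{proper} subset of $X_s$, and nothing prevents this subset from being bounded while $X_s$ is not---all the unboundedness of $X_s$ may live in a single piece $X_s^j$ with $j\leq k$, as in the example above. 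Lemma~\ref{claim} only hands you \emph{some} unbounded $X_s^j$, not one with $j$ arbitrarily large.

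The paper argues under the correct contradiction hypothesis: assume $s\in T_X$ has \emph{no} $\infty$-splitting extension in $T_X$, not merely that $s$ itself fails to split. From this one sees (by induction on $n$) that $g_s(n)=\sup\{t(n):t\in(T_X)_s\land n\in dom(t)\}$ is finite for every $n$, and then combines $g_s$ with a single $\leq_*$-bound $g$ for $\bigcup\{X_t:t\notin T_X\}$ (obtained via Lemma~\ref{claim}) to bound $X_s$ by $\max(g,g_s)$, contradicting $s\in T_X$. The global hypothesis on all extensions of $s$ is what makes $g_s$ finite-valued; looking only one level down, as you do, cannot suffice.
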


\begin{proof} Suppose, by way of contradiction, that there is a node $s\in T_X$ with
no $\infty$-splitting extensions in $T_X$. We define a function $g_s:\omega\to\omega$ as follows:

$$
g_s(n)=\sup \{t(n):  t\in (T_X)_s\land n\in dom(t)\},
$$
where $(T_X)_s=\{f\in T_X:  s\sqsubseteq f\}$.
First note that $g_s(n)<\omega$, for all $n<\omega$.
Let $Q=\{ t\in (\omega)^{<\omega} : X_t \mbox{ is bounded under } \leq_*\}$.
By Lemma \ref{claim}, $\bigcup \{ X_t: t\in Q\}$ is bounded under $\leq_*$ by
some function $g$. Now let $h=\max (g,g_s)$. It follows that $X_s$
is $\leq_*$-bounded by $h$, a contradiction.
\qed
\end{proof}


We first consider oscillations of elements of $(\omega)^{<\omega}$.
Our first goal is to prove that if $T$
is a superperfect subtree of $(\omega)^{<\omega}$
then $\osc''[T]^2=\omega$. In fact, we prove
a slightly stronger lemma.

\begin{lemma}\label{estendereinodi} Let $S$ and $T$ be two superperfect subtrees of
$(\omega)^{<\omega}$ and let $s$ and $t$ be $\infty$-splitting nodes of $S$ and $T$
respectively. Then for all $n$ there are  $\infty$-splitting nodes $s'$ in $S$ and
$t'$ in $T$ such that $s\sqsubseteq s',$ $t\sqsubseteq t'$ and
$$\osc(s',t')=\osc(s,t)+n.$$
\end{lemma}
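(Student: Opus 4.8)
The plan is to prove the lemma by induction on $n$, the case $n=0$ being trivial (take $s'=s$, $t'=t$). For the inductive step it suffices to show that we can always increase the oscillation by exactly $1$, i.e.\ given $\infty$-splitting nodes $s \in S$, $t \in T$ we can find $\infty$-splitting extensions $s' \sqsupseteq s$ in $S$ and $t' \sqsupseteq t$ in $T$ with $\osc(s',t') = \osc(s,t)+1$; iterating this $n$ times and using superperfectness (which guarantees that above any node there is an $\infty$-splitting node, so we can always continue) yields the full statement. The key observation is that with the "graph-crossing" definition of $\osc$ used in this section, a new oscillation is created by first letting one sequence shoot up high above the other over a new coordinate, then letting the second sequence catch up and overtake — this is exactly what the two circled crossing points in the picture illustrate.

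Concretely, here is how I would produce the single extra oscillation. Let $m = \max(|s|,|t|)$ and first extend both $s$ and $t$ up to length $m$ (this is possible along any branch of a superperfect tree and, by choosing the new values carefully, can be done without changing the oscillation count since we control the last two coordinates). Say the common length is now $m$ and let $a = s(m-1)$, $b = t(m-1)$ be the last values; without loss of generality assume $s$ is the "low" sequence at coordinate $m-1$ in the relevant sense. Using that $s$ has an $\infty$-splitting extension in $S$, choose $s'' \sqsupseteq s$ with $|s''| = m+1$ and $s''(m)$ larger than any value $t$ currently attains and larger than what $T$ will be forced to do in the next step; then, using that $t$ has an $\infty$-splitting extension in $T$, choose $t'' \sqsupseteq t$ with $|t''| = m+1$ and $t''(m)$ chosen so that at coordinate $m$ we have $t'' \le s''$ but then arrange via one further coordinate that $t$ overtakes $s$: extend to length $m+2$ with $t$ jumping above $s$'s value. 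This creates exactly one new index $k$ with $s(k) \le t(k)$ and $s(k+1) > t(k+1)$ (or the symmetric pattern), hence $\osc$ goes up by $1$; and since all extensions were taken through $\infty$-splitting nodes, we can finally extend once more to reach honest $\infty$-splitting nodes $s', t'$ without disturbing the count (the oscillation is determined by the coordinates already fixed, provided the tails are chosen monotone enough, e.g.\ both increasing and $t'$ staying below $s'$ thereafter, or vice versa).

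The main obstacle I anticipate is bookkeeping the oscillation count while making the extensions: one must check that the coordinates added purely to align lengths or to reach $\infty$-splitting nodes do not accidentally create or destroy crossings. I would handle this by always padding with values chosen so that one sequence stays strictly below the other on all "auxiliary" coordinates (so $s(k) \le t(k)$ holds but $s(k+1) > t(k+1)$ fails, contributing nothing), and isolating the single genuine crossing to one carefully chosen pair of consecutive coordinates. A second, minor point is the base of the induction on lengths: one should note that every $\infty$-splitting node of a superperfect tree lies below arbitrarily long nodes of the tree, and that $\infty$-splitting nodes are cofinal along branches, so the recursion never gets stuck. Once the "$+1$" step is established, the lemma follows immediately by iteration, and in particular $\osc''[T]^2 = \omega$ for any single superperfect $T$ by taking $S=T$ and starting from $\osc(s_0,s_0)=0$ at the least $\infty$-splitting node.
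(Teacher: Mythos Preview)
Your overall strategy---induction on $n$, reducing to a single ``$+1$'' step achieved by alternating large jumps---is exactly the paper's approach. The gap is in the execution of that step.

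The only control a superperfect tree gives you at an $\infty$-splitting node is that the \emph{next} value can be made arbitrarily \emph{large}; you cannot force it to be small, and between one $\infty$-splitting node and the next the values are whatever the tree dictates. So two moves in your sketch are not justified as written. First, ``extend both to the same length $m$ without changing the oscillation'': when you extend the shorter sequence by a large first jump, all further coordinates stay above (sequences are increasing), which may or may not create a new crossing at coordinate $|s|-1$ depending on the sign there---you do not get to choose. Second, ``choose $t''(m)$ so that $t'' \le s''$'': you cannot make $t''(m)$ small, and you proposed choosing $s''(m)$ before knowing $t''(m)$, which is circular.

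The paper sidesteps both issues by \emph{not} equalizing lengths. It maintains the asymmetric invariant $|s_i|<|t_i|$ and $s_i(|s_i|-1)\le t_i(|s_i|-1)$, and at each stage: (i) extend $s_i$ to an $\infty$-splitting $u$ with $u(|s_i|)>t_i(|t_i|-1)$ and $|u|>|t_i|+1$ (this creates exactly one new crossing, at $|s_i|-1$, and since $u$ is increasing it stays above $t_i$ on all overlap); then (ii) extend $t_i$ to an $\infty$-splitting $v$ with $v(|t_i|)>u(|u|-1)$ and $|v|>|u|+1$ (this restores the invariant and adds no crossing, since the transition is from $u>v$ to $u\le v$, which the definition of $\osc$ does not count). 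The point is that every choice only requires a value to be \emph{large}, and each extension is made after the one it must dominate. If you reorganize your argument around this invariant, your proof goes through.
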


\begin{proof} We may assume without
loss of generality that $\vert s\vert<\vert t\vert$ and
$s(\vert s\vert-1)\leq t(\vert s\vert -1).$
We can recursively pick some $\infty$-splitting extensions
$s_i\in S$ and $t_i\in T$, for $i\leq n$ such that:

\begin{itemize}
\item $s_0=s$ and $t_0=t$;
\item $s_0\sqsubset s_1\sqsubset \cdots \sqsubset s_n$;
\item $t_0\sqsubset t_1\sqsubset \cdots \sqsubset t_n$;
\item $\osc(s_i, t_i)=\osc(s,t)+i,$ for all $i;$
\item $\vert s_i\vert<\vert t_i\vert$ and $s_i(\vert s_i\vert-1)\leq t_i(\vert s_i\vert-1)$.
\end{itemize}

Given $s_i$ and $t_i$, since $S$ is superperfect and $s_i$ is $\infty$-splitting
in $S$, we can find some $\infty$-splitting extension $u$ of $s_i$ in $S$
such that $u(\vert s_i\vert)>t_i(\vert t_i\vert-1)$ and such that $\vert u\vert>\vert t_i\vert +1$.
In the same way, we can take an $\infty$-splitting extension $v$ of $t_i$ in $T$
such that $v(\vert t_i\vert)>u(\vert u\vert-1)$ and $\vert v\vert>\vert u\vert +1$.
Since $u$ and $v$ are strictly increasing, we have $\osc(u,v)=\osc(s_i,t_i)+1$,
so we can define $s_{i+1}=u$ and $t_{i+1}=v.$

Finally, $\osc(s_n, t_n)=osc(s,t)+n$ and this completes the proof.
\qed
\end{proof}

\begin{corollary}\label{cor} If $T$ is a superperfect subtree of $(\omega)^{<\omega}$
then $\osc''[T]^2=\omega.$
\qed
\end{corollary}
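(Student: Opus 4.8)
The plan is to deduce the corollary directly from Lemma \ref{estendereinodi}, applied with $S=T$. First I would exploit superperfectness twice: applying the definition to the root of $T$ produces an $\infty$-splitting node $r\in T$, and then applying it to $r$ (after extending $r$ to some $u\in T$ with $u(|r|)$ defined, so $r\sqsubsetneq u$) produces an $\infty$-splitting node $r'\in T$ with $r\sqsubsetneq r'$. Since $r$ and $r'$ are comparable finite sequences, no index $n$ can satisfy $r(n)\leq r'(n)\wedge r(n+1)>r'(n+1)$, hence $\osc(r,r')=0$; in particular $0\in\osc''[T]^2$.

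Next, for each fixed $n\geq 1$ I would invoke Lemma \ref{estendereinodi} with $S=T$, taking $s=r$ (an $\infty$-splitting node of $S$) and $t=r'$ (an $\infty$-splitting node of $T$); note $|r|<|r'|$ and $r(|r|-1)\leq r'(|r|-1)$ since $r\sqsubseteq r'$, so the lemma applies. It yields $\infty$-splitting nodes $s',t'\in T$ with $r\sqsubseteq s'$, $r'\sqsubseteq t'$ and $\osc(s',t')=\osc(r,r')+n=n$. Since $n\geq 1>0=\osc(s',s')$, the nodes $s'$ and $t'$ are distinct, so $\{s',t'\}$ is a genuine element of $[T]^2$ and $n\in\osc''[T]^2$. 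Combined with the previous paragraph this gives $\omega\subseteq\osc''[T]^2$; the reverse inclusion is immediate because every node of $T$ is a finite sequence, so $\osc$ takes only natural-number values on $[T]^2$. Hence $\osc''[T]^2=\omega$.

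The only point needing care is the legitimacy of the instance of Lemma \ref{estendereinodi}: the proof of that lemma presupposes two $\infty$-splitting nodes of distinct lengths, the shorter dominated by the longer at the relevant coordinate, so one cannot simply feed it a single node twice. This is precisely why I pass to the comparable pair $r\sqsubsetneq r'$ instead of using $r$ alone, and why the value $0$ is handled separately. Everything else is routine bookkeeping.
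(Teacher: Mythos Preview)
Your proof is correct and follows the paper's intended approach: the corollary is deduced immediately from Lemma~\ref{estendereinodi} with $S=T$, and the paper records no further argument beyond the \qed. Your extra care in starting from a comparable pair $r\sqsubsetneq r'$ of $\infty$-splitting nodes (so that the base oscillation is $0$ and the WLOG hypothesis $|s|<|t|$ in the lemma's proof is genuinely satisfied) is a reasonable way to make that one-line deduction precise.
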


We now turn to oscillations of elements of $(\omega)^{\omega}$.
We will need the following definition.

\begin{definition}A subset $X$ of $(\omega)^{\omega}$
is  $\sigma$-directed under $\leq_*$ if, and only if,
for all countable $D\subseteq X$ there is $f\in X$ such that $d\leq_* f$,
for all $d\in D.$ \end{definition}

\begin{lemma}\label{estendereinXY} Suppose $X\subseteq (\omega)^{\omega}$
and $\sigma$-directed and unbounded under $\leq_*$ and $Y\subseteq (\omega)^{\omega}$
is such that for every $a\in X$ there is $b\in Y$ such that $a\leq_*b$.
There is an integer $n_0$ such that for all $k<\omega$
there is $f\in X$ and $g\in Y$ such that $\osc(f,g)=n_0+k.$
\end{lemma}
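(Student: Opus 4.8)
The plan is to mimic the proof of Lemma~\ref{estendereinodi}, but with a twist that accounts for the asymmetry between $X$ and $Y$: we no longer control $Y$ as a tree, only as a family cofinal over $X$ with respect to $\leq_*$. First I would dispose of the ``starting value'' $n_0$. Since $X$ is unbounded under $\leq_*$, by Lemma~\ref{aaaaa} the tree $T_X$ is superperfect; pick an $\infty$-splitting node $s_*$ of $T_X$, extend it inside $T_X$ to some $f_0\in X$ with $s_*\sqsubseteq f_0$, and choose $g_0\in Y$ with $f_0\leq_* g_0$. Set $n_0=\osc(f_0,g_0)$; more honestly, $n_0$ will be determined by the first stage of the construction below, i.e.\ the oscillation count already forced before we start ``adding ones''. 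The real content is the inductive step: given $f\in X$ and $g\in Y$ agreeing up to some coordinate $m$ with, say, $f(m)\leq g(m)$, I want to produce $f'\in X$, $g'\in Y$ extending this common behaviour with $\osc(f',g')=\osc(f,g)+1$ (and the analogous ``$\geq$'' configuration at the new end so the induction can continue).

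To take one inductive step I would argue as follows. We have $f,g$ with a known initial segment. Because $X$ is $\sigma$-directed and $f\in X$, and because (after fixing the finite initial data) the set of members of $X$ extending that initial segment is still unbounded under $\leq_*$ --- this is where superperfectness of $T_X$ via Lemma~\ref{aaaaa} is used, together with the $\sigma$-directedness --- I can find $f'\in X$ which dominates $g$ everywhere past coordinate $m+1$ (first use directedness to get a member of $X$ above $g$ eventually, then, using that $T_X$ is superperfect, splice it above the required initial segment). Concretely: extend the common stem by one coordinate keeping $f\leq g$ there, then jump $f'$ high above $g$, creating exactly one new oscillation point (a coordinate $n$ with $f'(n)\leq g(n)$ and $f'(n+1)>g(n+1)$). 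Now $f'(k)>g(k)$ for all large $k$, so the roles are reversed; apply the hypothesis on $Y$ to get $g''\in Y$ with $f'\leq_* g''$, hence $g''(k)\geq f'(k)>g'(k)$ eventually — wait, more carefully: choose $g'\in Y$ with $f'\leq_* g'$, so eventually $g'$ dominates $f'$, producing the reversed configuration $f'(n)\leq g'(n)$ at the tail, which is exactly the shape we need to re-enter the induction with the oscillation count increased by one. Iterating $k$ times yields $f,g$ with $\osc(f,g)=n_0+k$.

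The main obstacle, which I expect to occupy the bulk of the proof, is bookkeeping the ``one new oscillation per step'' count while only having one-sided control: each time we pass from an $f$-dominates configuration to a $g$-dominates configuration we must invoke the cofinality of $Y$ over $X$, but that only gives $\leq_*$-domination, i.e.\ domination past some unspecified coordinate, so we lose control of a finite but unbounded initial stretch of $g'$. The fix is to always choose the next $f'\in X$ (using $\sigma$-directedness) to dominate that stretch, then re-stabilize; carrying both ``$f$ ends above'' and ``$g$ ends above'' versions of the configuration through the induction, and checking that no \emph{extra} oscillation points are accidentally created in the uncontrolled middle segments, is the delicate part. Because we only claim the existence of $n_0$ rather than its value, all such uncontrolled finite initial oscillations can be absorbed into $n_0$, which is why the statement is phrased the way it is. Once the inductive step is set up cleanly, the conclusion ``for all $k$ there are $f\in X$, $g\in Y$ with $\osc(f,g)=n_0+k$'' follows by iterating the step $k$ times starting from the base configuration. \qed
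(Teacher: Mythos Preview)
Your iterative scheme has a genuine gap. The inductive step asks you, given $f\in X$ and $g\in Y$ with $\osc(f,g)=n_0+k$, to produce $f'\in X$ and $g'\in Y$ with $\osc(f',g')=n_0+k+1$. But when you invoke the hypothesis on $Y$ to pick $g'\in Y$ with $f'\leq_* g'$, the only information you have about $g'$ is that it eventually dominates $f'$; you have \emph{no} control over $g'$ on the finite initial segment where the previous $n_0+k$ oscillations between $f$ and $g$ were recorded. So $\osc(f',g')$ is unrelated to $\osc(f,g)$: the count does not increase by one, it resets. Your proposed fix --- absorbing the damage into $n_0$ --- cannot work, because $n_0$ must be fixed in advance of $k$, whereas the uncontrolled contribution recurs at every one of the $k$ steps. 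The ``splice'' you describe (take $h\in X$ above $g$ by directedness, then graft it onto a prescribed stem in $T_X$) also fails: $X_s$ being $\leq_*$-unbounded does not mean it contains an element $\leq_*$-above a given $g$.

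The paper avoids iteration entirely by a single reduction that you are missing. Fix a countable dense $D\subseteq X$; by $\sigma$-directedness there is $a\in X$ with $d\leq_* a$ for every $d\in D$. Now the set $Y'=\{g\in Y: a\leq_* g\}$ is $\leq_*$-unbounded (this uses the cofinality hypothesis on $Y$), and by Lemma~\ref{claim} so is some $Y_{m_0}=\{g\in Y': a\leq_{m_0} g\}$. Thus $T_{Y_{m_0}}$ is superperfect, and you can apply Lemma~\ref{estendereinodi} directly to the pair $T_X,\,T_{Y_{m_0}}$ to obtain $\infty$-splitting nodes $s,t$ with $\osc(s,t)=n_0+k$, where $n_0$ is the oscillation of the least splitting nodes. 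Only now do you lift once: take $f\in D$ with $s\sqsubseteq f$, find $m\geq m_0$ with $f\leq_m a$, and pick $g\in Y_{m_0}$ extending $t\,\wh\, i$ for some $i>f(m)$. The key point is that $f(n)\leq a(n)\leq g(n)$ for all $n\geq m$, so the tail creates no oscillations, and the jump to $i$ handles the stretch between $|t|$ and $m$. The role of $\sigma$-directedness is thus not to dominate $g$ inside $X$ at each step, but to produce a single $a$ that simultaneously caps a dense set $D$ and carves out an unbounded, superperfect-tree-generating slice of $Y$.
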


\begin{proof} Fix a countable dense subset $D$ of $X$.
Since $X$ is $\sigma$-directed, there is a function $a\in X$ such that
$d\leq_* a$, for all $d\in D$.
The set $Y'=\{g\in Y:  a\leq_* g \}$ is unbounded under $\leq_*$.
We define $Y_m=\{g\in Y':  a\leq_m g\}$, for all $m<\omega$.
By Lemma \ref{claim} and the fact that $Y'=\bigcup \{ Y_m: m<\omega\}$,
there exists $m_0<\omega$ such that $Y_{m_0}$ is also $\leq_*$-unbounded.
Let $s_0\in T_X$ and $t_0\in T_{Y_{m_0}}$ be the two least
$\infty$-splitting nodes of $T_X$ and $T_{Y_{m_0}}$ respectively.
Let $n_0=\osc(s_0,t_0)$.
Now, fix $k<\omega$. By Lemma \ref{estendereinodi}, there are two $\infty$-splitting $s\in T_X$
and $t\in T_{Y_{m_0}}$  such that $\osc(s,t)=n_0+k$.
We may assume without loss of generality that $\vert t\vert\leq \vert s\vert$ and
$t(\vert t\vert-1)>s(\vert t\vert -1).$ Since $D$ is dense, there is $f\in D$
such that $s\sqsubseteq f\leq_* a$ Fix $m\geq m_0$ such that $f\leq_m a$.
Since $t$ is $\infty$-splitting in $T_{Y_{m_0}}$, we can pick $i>f(m)$ and $g\in Y_{m_0}$,
such that $t\, \wh i\sqsubseteq g$. We know that for all $k\geq m_0$, $a(k)\leq g(k)$,
so for all $k\geq m,$ $f(k)\leq g(k)$.
Moreover, $f$ and $g$ are increasing and $t\, \wh i \sqsubseteq g$,
so for all $k$ between $\vert t\vert$ and $m$ we have $g(k)>f(k)$.
It follows that $\osc(f,g)=\osc(s,t)=n_0+k$ and this completes the proof.
\qed
\end{proof}

The following theorem is due to S. Todor\v{c}evi\'c (see \cite{Todorcevic}).

\begin{theorem}[\cite{Todorcevic}] Suppose $X\subseteq (\omega)^{\omega}$ is unbounded under $\leq_*$
and $\sigma$-directed, then $osc''[X]^2=\omega.$
\end{theorem}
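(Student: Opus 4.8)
The plan is to run the argument of Lemma~\ref{estendereinXY} with $Y$ taken to be $X$ itself. This is legitimate: since $X$ is $\sigma$-directed (indeed already since $a\leq_* a$), for every $a\in X$ there is $b\in X$ with $a\leq_* b$, so the hypothesis of Lemma~\ref{estendereinXY} is satisfied with $Y=X$. Concretely, fix a countable dense $D\subseteq X$ and, by $\sigma$-directedness, an $a\in X$ with $d\leq_* a$ for all $d\in D$; set $Y'=\{g\in X:a\leq_* g\}$ and $Y_m=\{g\in Y':a\leq_m g\}$, and use Lemma~\ref{claim} to pick $m_0$ with $Y_{m_0}$ unbounded under $\leq_*$. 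By Lemma~\ref{aaaaa}, $T_X$ and $T_{Y_{m_0}}$ are superperfect.

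The only extra observation needed, compared with the general situation of Lemma~\ref{estendereinXY}, is that here $Y_{m_0}\subseteq X$, hence $T_{Y_{m_0}}\subseteq T_X$; consequently every $\infty$-splitting node of $T_{Y_{m_0}}$ is already $\infty$-splitting in $T_X$. So, letting $t_0$ be the least $\infty$-splitting node of $T_{Y_{m_0}}$ and setting $s_0=t_0$ (now viewed inside $T_X$), the base oscillation is $\osc(s_0,t_0)=\osc(t_0,t_0)=0$ rather than the possibly positive $n_0$ of Lemma~\ref{estendereinXY}. Now fix an arbitrary $k<\omega$. Applying Lemma~\ref{estendereinodi} to the superperfect trees $T_X$, $T_{Y_{m_0}}$ and the $\infty$-splitting nodes $s_0,t_0$, we obtain $\infty$-splitting nodes $s\in T_X$ and $t\in T_{Y_{m_0}}$ with $s_0\sqsubseteq s$, $t_0\sqsubseteq t$ and $\osc(s,t)=k$.

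It then remains to lift $s$ and $t$ to genuine members of $X$ without changing the oscillation, exactly as in the last paragraph of the proof of Lemma~\ref{estendereinXY}. We may normalize so that $|t|\leq|s|$ and $t(|t|-1)>s(|t|-1)$. By density of $D$ there is $f\in D$ with $s\sqsubseteq f$, and we fix $m\geq m_0$ with $f\leq_m a$. Since $t$ is $\infty$-splitting in $T_{Y_{m_0}}$, we may choose $i$ larger than $f(\max(m,|t|))$ together with some $g\in Y_{m_0}$ with $t\,\wh i\sqsubseteq g$. Because $f$ and $g$ are strictly increasing, $g$ stays strictly above $f$ on every coordinate from $|t|$ up to $m$, while $f\leq_m a\leq g$ from $m$ on; hence $g(n)\geq f(n)$ for all $n\geq|t|$, so every coordinate $n$ with $f(n)\leq g(n)$ and $f(n+1)>g(n+1)$ must satisfy $n<|t|$, where $f$ and $g$ agree with $s$ and $t$. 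Therefore $\osc(f,g)=\osc(s,t)=k$ with $f,g\in X$, and since $k$ was arbitrary we conclude $\osc''[X]^2=\omega$.

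The step requiring the most care is this last lifting: extending the finite nodes $s,t$ to infinite members of $X$ must not create new oscillations past $\max(|s|,|t|)$, and this is precisely what $\sigma$-directedness provides (through the dominating $a$ and the stabilized family $Y_{m_0}$); for a merely $\leq_*$-unbounded $X$ this control is not available. One should also verify that the normalization $|t|\leq|s|$, $t(|t|-1)>s(|t|-1)$ can always be arranged, which it can — by extending whichever of $s,t$ is shorter along an $\infty$-splitting branch and, if necessary, interchanging their roles — just as in the proof of Lemma~\ref{estendereinXY}.
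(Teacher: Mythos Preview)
Your proof is correct and follows essentially the same route as the paper: run Lemma~\ref{estendereinXY} with $Y=X$ so that the initial nodes can be taken equal, giving $n_0=0$. Your explicit remark that $T_{Y_{m_0}}\subseteq T_X$ (hence any $\infty$-splitting node of $T_{Y_{m_0}}$ is already $\infty$-splitting in $T_X$, so one may legitimately set $s_0=t_0$) is exactly the justification the paper's terse ``thus $s_0=t_0$'' is relying on.
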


\begin{proof} The proof is the same as for Lemma \ref{estendereinXY}, by assuming $Y=X$.
Thus $s_0=t_0$ and, consequently, $n_0=0$ in the previous proof.
Hence, for all $k<\omega$ there are $f,g\in X$ such that $\osc(f,g)=k.$ This completes the proof.
\qed
\end{proof}

We recall that $\mathfrak{b}$ is the least cardinal of an $\leq_*$-unbounded subset of
$(\omega)^{\omega}.$ Fix an unbounded $\mathscr{F}\subseteq (\omega)^{\omega}$ of
cardinality $\mathfrak{b}$. We may assume that
$\mathscr{F}$ is well ordered under $\leq_*$ and $(\mathscr{F},\leq_*)$ has order type $\mathfrak{b}$.

\begin{remark}\label{four} Every unbounded subset of $\mathscr{F}$ is $\sigma$-directed
and cofinal in $\mathscr{F}$ under $\leq_*$.
\end{remark}

\begin{corollary}\label{ggggg} Let $X,Y \subseteq \mathscr{F}$ be unbounded under $\leq_*$.
There exists $n_0< \omega$ such that for all $k<\omega$ there exist $f\in X$
and $g\in Y$ such that $\osc(f,g)=n_0+k$.
\end{corollary}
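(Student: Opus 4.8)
The plan is to reduce Corollary \ref{ggggg} directly to Lemma \ref{estendereinXY} by exploiting the special properties of $\mathscr{F}$ recorded in Remark \ref{four}. Since $X \subseteq \mathscr{F}$ is unbounded under $\leq_*$, Remark \ref{four} tells us that $X$ is $\sigma$-directed and cofinal in $\mathscr{F}$; likewise $Y \subseteq \mathscr{F}$ is unbounded, hence cofinal in $\mathscr{F}$. In particular $X$ is $\sigma$-directed and unbounded under $\leq_*$, which is exactly the first hypothesis of Lemma \ref{estendereinXY}.

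The one thing left to check is the compatibility hypothesis of Lemma \ref{estendereinXY}, namely that for every $a \in X$ there is $b \in Y$ with $a \leq_* b$. But this is immediate: $a \in X \subseteq \mathscr{F}$, and since $Y$ is unbounded in $\mathscr{F}$ it is cofinal in $(\mathscr{F}, \leq_*)$ by Remark \ref{four}, so there is $b \in Y$ with $a \leq_* b$. With both hypotheses of Lemma \ref{estendereinXY} verified, that lemma produces an integer $n_0 < \omega$ such that for every $k < \omega$ there are $f \in X$ and $g \in Y$ with $\osc(f,g) = n_0 + k$, which is precisely the statement of the corollary.

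There is essentially no obstacle here: the corollary is a direct specialization of Lemma \ref{estendereinXY} once one observes, via Remark \ref{four}, that subsets of the fixed $\leq_*$-well-ordered unbounded family $\mathscr{F}$ automatically enjoy $\sigma$-directedness and cofinality. If I wanted to be careful about one point, I would note why Remark \ref{four} holds: since $(\mathscr{F}, \leq_*)$ has order type $\mathfrak{b}$, which is a regular cardinal (being the least size of an unbounded family, any $\leq_*$-unbounded subfamily has size $\mathfrak{b}$), an unbounded subset of $\mathscr{F}$ is cofinal in it, and $\sigma$-directedness then follows because any countable subset of $\mathscr{F}$ is bounded in $\mathscr{F}$ (as $\mathfrak{b} > \omega$) and hence dominated by some member of the cofinal subset. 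But the excerpt already grants Remark \ref{four}, so the proof is just the two-line invocation of Lemma \ref{estendereinXY} described above.
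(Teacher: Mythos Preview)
Your proof is correct and follows exactly the paper's own approach: the paper's proof is the single line ``Trivial by Remark \ref{four} and Lemma \ref{estendereinXY},'' and you have simply unpacked that invocation. Your verification that $Y$ being cofinal in $\mathscr{F}$ supplies the domination hypothesis of Lemma \ref{estendereinXY} is precisely the intended use of Remark \ref{four}.
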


\begin{proof} Trivial by Remark \ref{four} and Lemma \ref{estendereinXY}.
\qed
\end{proof}

\noindent In \cite{Todorcevic}, Todor\v{c}evi\'c proved a more general result:

\begin{theorem}[\cite{Todorcevic}]\label{h}
Suppose $\mathscr{F}$ is $\leq_*$-unbounded and well ordered by $\leq_*$
in order type $\mathfrak{b}$. Suppose $\mathfrak{A}\subseteq [\mathscr{F}]^n$,
 $\vert\mathfrak{A}\vert=\mathfrak{b}$ and $\mathfrak{A}$ consists of
 pairwise disjoint $n$-tuples.
 Then there exists $h: n\times n\to \omega$ such that for all $k<\omega$
 there exist $A,B\in \mathfrak{A}$ such that $A\neq B$ and
 $\osc(A(i), B(j))=h(i,j)+k$, for all $i,j<n$.
 Here $A(i)$ denotes the i-th element of $A$ in increasing order,
 and similarly $B(j)$ denotes the $j$-th element of $B$.
\end{theorem}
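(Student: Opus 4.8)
The plan is to mimic the proof of Lemma~\ref{estendereinXY}, but with an $n$-dimensional bookkeeping of oscillation values. First I would set up the analogue of the preliminary reductions: by Remark~\ref{four} every unbounded subset of $\mathscr{F}$ is $\sigma$-directed and cofinal, so I can pass freely to unbounded subfamilies of $\mathfrak{A}$. The key structural fact I want is that one can find, for each coordinate $i<n$, an $\leq_*$-unbounded set $X_i$ consisting of $i$-th coordinates $A(i)$ of tuples $A$ ranging over some fixed unbounded $\mathfrak{A}'\subseteq\mathfrak{A}$, together with a single function $a\in\mathscr{F}$ and a single modulus $m_0$ such that, after restricting, $a\leq_{m_0}A(i)$ for \emph{all} $i<n$ and all $A\in\mathfrak{A}'$, and each $X_i$ remains unbounded. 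This is obtained by finitely many applications of Lemma~\ref{claim}: intersect the ``$\leq_{m}$'' slices over the $n$ coordinates simultaneously, using that a finite intersection of co-bounded-modulus conditions still leaves an unbounded piece because $\mathfrak{b}$ is regular and the tuples are pairwise disjoint so the coordinates vary independently.

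Next I would define the superperfect trees $T_{X_i}\subseteq(\omega)^{<\omega}$ via Lemma~\ref{aaaaa}, and take $s_i^0$ to be the least $\infty$-splitting node of $T_{X_i}$. Set $h(i,j)=\osc(s_i^0,s_j^0)$. The heart of the argument is then a simultaneous extension lemma generalizing Lemma~\ref{estendereinodi}: given $\infty$-splitting nodes $s_i\in T_{X_i}$, for any prescribed increment $k$ one can find $\infty$-splitting extensions $s_i'\sqsupseteq s_i$ with $\osc(s_i',s_j')=\osc(s_i,s_j)+k$ for all pairs $i,j$ \emph{at once}. I would prove this by induction on $k$, at each step choosing the extensions coordinate by coordinate in a carefully staggered order on the integer axis — arrange the lengths $|s_0|<|s_1|<\cdots<|s_{n-1}|$ and the last values $s_0(\cdot)\le s_1(\cdot)\le\cdots$ in a fixed ``staircase'' pattern, then push all of them up by one $\infty$-splitting step in the same round-robin fashion used in Lemma~\ref{estendereinodi}. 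Because each single round adds exactly $1$ to every pairwise oscillation count simultaneously when the extensions are chosen to dominate each other in the right cyclic order, this gives the uniform increment.

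Finally I would perform the density/pull-back step exactly as in Lemma~\ref{estendereinXY}, but done for all $n$ coordinates together. Fix countable dense subsets $D_i$ of $X_i$; by $\sigma$-directedness choose $a$ dominating them all; run the tree construction inside $T_{X_i}$ (using $Y_{m_0}$-style slices relative to $a$) to get, for the target increment $k$, $\infty$-splitting $s_i\in T_{X_i}$ with $\osc(s_i,s_j)=h(i,j)+k$; then realize each $s_i$ as an initial segment of some actual $f_i\in D_i$, and correct the finitely many ``tail'' disagreements beyond $m_0$ using the $\infty$-splitting of $T_{X_i}$ to pick values $g_i$ large enough that the oscillation contributed by the tail is zero. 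The upshot is functions $f_0,\dots,f_{n-1}$ that are the coordinates of two (or the correcting moves produce two) distinct tuples $A\ne B$ in $\mathfrak{A}$ with $\osc(A(i),B(j))=h(i,j)+k$. The main obstacle, and the part requiring the most care, is the simultaneous extension lemma: controlling $\binom{n}{2}$ oscillation counts at once means the ``who dominates whom on which interval'' pattern must be globally consistent, and one must verify that a single round-robin sweep really increments every pair by exactly one and never accidentally creates or destroys an extra oscillation between a pair not currently being touched; this is where the staircase arrangement of lengths and terminal values has to be set up and maintained invariantly throughout the induction.
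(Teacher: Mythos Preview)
Your outline has a structural gap: by passing to \emph{separate} trees $T_{X_i}$, one for each coordinate $i<n$, you lose the constraint that the $n$ initial segments you build must be simultaneously realized by a \emph{single} tuple $A\in\mathfrak{A}$. Having $s_i\in T_{X_i}$ means only that $\{A\in\mathfrak{A}:s_i\sqsubseteq A(i)\}$ is unbounded for each $i$ individually; there is no reason the intersection $\{A\in\mathfrak{A}:\forall i\ s_i\sqsubseteq A(i)\}$ should be nonempty, let alone unbounded. (Think of a ``diagonal'' $\mathfrak{A}$ in which $A(0)$ determines $A(1)$.) So the pull-back step at the end, where you ``realize each $s_i$ as an initial segment of some actual $f_i$'' and expect these $f_i$ to assemble into a tuple of $\mathfrak{A}$, does not go through. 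The paper avoids this by working from the outset with a single tree $T_{\mathfrak{A}_{m_0}}$ whose nodes are $n$-tuples $\vec t\in(\omega^{<\omega})^n$ that are joint initial segments of some $B\in\mathfrak{A}_{m_0}$; the notion of $\infty$-splitting is then formulated for such $n$-tuples, and superperfectness is proved for this coupled tree (this is where the ``everywhere unbounded'' reduction is used).

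There is a second, related problem: your simultaneous extension lemma controls $\osc(s_i',s_j')$ for a \emph{single} family of nodes, but the theorem requires $\osc(A(i),B(j))$ with $A\neq B$. You need two $n$-tuples $\vec s,\vec t$ (both extending the minimal $\infty$-splitting $\vec r$) and you must increment all \emph{cross} oscillations $\osc(s_i,t_j)$ by~$1$ per round; this is what the paper does in its Claim following the definition of $h$. Your round-robin sketch could plausibly be adapted to this two-tuple setting once the coupled tree is in place, but as written it is aimed at the wrong target and, because of the decoupling above, cannot be completed to yield genuine $A,B\in\mathfrak{A}$.
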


\begin{proof} For any $A,B\in [\mathscr{F}]^n,$ we will write $A<_m B$ if,
and only if, $a<_m b$ for all $a\in A$ and $b\in B.$ Similarly, with $A\leq_* B$
we mean that  $a\leq_* b,$ for all $a\in A$ and $b\in B.$
Finally, if $A\in \mathfrak{A}$ and $m<\omega,$ we denote by
$A\restr m$ the sequence $\langle A(i)\restr m\rangle_{i<n}.$

We may assume that $\mathfrak{A}$ is \emph{everywhere unbounded},
that is for all $m<\omega$ and $A\in \mathfrak{A},$ the set
$\{B\in \mathfrak{A}:  B\restr m= A\restr m\}$ is also unbounded in $((\omega)^{\omega})^n$
under $\leq_*$. Take a countable dense $\mathfrak{D}\subseteq \mathfrak{A}$.
There is $A\in \mathfrak{A}$ such that $D\leq_* A,$ for all $D\in \mathfrak{D}$.
For all $m<\omega,$ let $\mathfrak{A}_m=\{B\in \mathfrak{A}:  A<_m B\}$.
As before, there is $m_0<\omega$ such that $\mathfrak{A}_{m_0}$ is everywhere unbounded.


Given any $\vec{t}\in (\omega^{<\omega})^n,$ we denote by $t_i$ the
$i$-th element of $\vec{t}$ in increasing order. If $B\in (\omega^{\omega})^n,$
then $\vec{t}\sqsubseteq B$ means $t_i\sqsubseteq B(i),$ for all $i<n.$ Now, we define

$$
T_{\mathfrak{A}_{m_0}}=\{\vec{t}\in (\omega^{<\omega})^n:
\forall i<n(\vert t_i\vert<\vert t_{i+1}\vert)\land \exists B\in \mathfrak{A}_{m_0}(\vec{t}\sqsubseteq B)\}.
$$

For any sequence $\vec{s}\in T_{\mathfrak{A}_{m_0}},$ we say that \emph{$\vec{s}$ is
$\infty$-splitting} if for all $l<\omega,$ there is $\vec{t}\in T_{\mathfrak{A}_{m_0}}$
such that $\vec{s}\sqsubseteq \vec{t}$ and $t_i(\vert s_i\vert)>l$, for all $i<n$.

\begin{cl}\label{mu} $T_{\mathfrak{A}_{m_0}}$ is superperfect, that is for all
$\vec{s}\in T_{\mathfrak{A}_{m_0}}$, there is an $\infty$-splitting sequence
$\vec{t}\in T_{\mathfrak{A}_{m_0}}$ such that $\vec{s}\sqsubseteq \vec{t}$.
\end{cl}

\begin{proof}
Given $\vec{s}\in T_{\mathfrak{A}_{m_0}},$ define $t_0$ as the least
$\infty$-splitting extension of $s_0$ in $T_{Z(0)},$ where
$Z(0):=\{B(0):  B\in \mathfrak{A}_{m_0}\}$. Assume that $\vec{t}\restr i$ is defined, the set
$Z(i):=\{ B(i); \spazio B\in \mathfrak{A}_{m_0}\textrm{ and }B\restr i=\vec{t}\restr i\}$
is unbounded (because $\mathfrak{A}_{m_0}$ is everywhere unbounded).
Put $t_i$ any $\infty$-splitting extension of $s_i$ in $T_{Z(i)},$
such that $\vert t_i\vert>\vert t_{i-1}\vert.$ The sequence $\vec{t}$,
so defined, is $\infty$-splitting in $T_{\mathfrak{A}_{m_0}}.$
This completes the proof of the claim.
\qed
\end{proof}

Now, let $\vec{r}\in T_{\mathfrak{A}_{m_0}}$ be the least $\infty$-splitting sequence, we define for all $i,j<n,$
$$
h(i,j)= \osc(r_i, r_j).
$$

\begin{cl}\label{nu} For all $k<\omega,$ there are two $\infty$-splitting sequences
$\vec{s}, \vec{t}\in T_{\mathfrak{A}_{m_0}}$ such that $\vec{r}\sqsubseteq \vec{s}, \vec{t}$ and
$osc(s_i, t_j)=osc(r_i, r_j)+k,$ for all $i,j<n$.
\end{cl}

\begin{proof} We prove this by induction on $k< \omega$.
The case $k=0$ is trivial. Let $\vec{s}, \vec{t}\in T_{\mathfrak{A}_{m_0}}$ be $\infty$-splitting,
such that  $\vec{r}\sqsubseteq \vec{s}, \vec{t}$ and $osc(s_i, t_j)=osc(r_i, r_j)+k$, for all $i,j$.
Assume without loss of generality that $\vert s_i\vert<\vert t_j\vert$ and
$s_i(\vert s_i\vert-1)\leq t_j(\vert s_i\vert-1),$ for all $i,j$.
Since $\vec{s}$ is $\infty$-splitting, there is an $\infty$-splitting sequence
$\vec{u}\in T_{\mathfrak{A}_{m_0}}$
such that $\vec{s}\sqsubseteq \vec{u}$ and $u_i(\vert s_i\vert)>t_j(\vert t_j\vert-1)$,
for all $i,j.$
We ask, also, for $\vert u_i\vert>\vert t_j\vert+1,$ for all $i,j$.
Similarly, we can find an $\infty$-splitting sequence $\vec{v}\in T_{\mathfrak{A}_{m_0}}$
such that $\vec{t}\sqsubseteq \vec{v}$ and $v_i(\vert t_i\vert)>u_j(\vert u_j\vert-1)$,
for all $i,j.$
It follows that $\osc(u_i, v_j)=osc(s_i, t_j)+1$ for all $i,j.$ This completes the proof of the claim.
\qed
\end{proof}

Fix $\vec{s}$ and $\vec{t}$ as in Claim \ref{nu}, assume without loss of generality
that $\vert s_i\vert\leq \vert t_j\vert$ and $s_i(\vert s_i\vert-1)>t_j(\vert s_i\vert-1)$,
for all $i,j$.
Consider, now, the families
$X=\{B\in \mathfrak{A}:  \vec{t}\sqsubseteq B\}$ and $\mathfrak{D}'=\mathfrak{D}\cap X$.
We have that $X$ is everywhere unbounded and $\mathfrak{D}'$ is dense in $X$.
Take any $D\in \mathfrak{D}',$ then $\vec{t}\sqsubseteq D<_m A$ for some $m>m_0$.
Since $\vec{s}$ is $\infty$-splitting there is $l\geq D(n-1)(m)$ and
$B\in \mathfrak{A}_{m_0}$ such that
$\vec{s}\, \wh l:=\langle s_i\, \wh l\rangle_{i<n}\sqsubseteq B$.
By construction, $\osc(D(i), B(j))=\osc(s_i, t_j)=h(i,j)+k,$ for all $i,j<n$.
This completes the proof.
\qed
\end{proof}


Sometimes we need to improve $\osc$ to get an even better coloring.
First we want to get rid of the function $h$ of Theorem \ref{h}.
We fix a bijection $\omega\overset{e}{\to}\omega\times\omega.$
We define a new partial function $o$ on pairs of elements of
$(\omega)^{< \omega}$ or $(\omega)^{\omega}$ as follows.
Suppose $\osc(f,g)=2^{i_0}+2^{i_1}+\cdots+2^{i_k}$ for $i_0>i_1>\cdots >i_k$,
be the binary expansion of $\osc(f,g).$
We define $o(f,g)=\pi_0\circ e(i_0)$ where $\pi_0$ is the projection on the first component.

\begin{lemma}\label{sette}
Suppose $\mathscr{F}$ and $\mathfrak{A}\subseteq [\mathscr{F}]^n$ are as
in Theorem \ref{h}. Then for all $k<\omega$ there exists $A,B\in \mathfrak{A}$
such that $A\neq B$ and $o(A(i), B(j))=k,$ for all $i,j<n$.
\end{lemma}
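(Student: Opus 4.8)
The plan is to mimic the proof of Theorem \ref{h}, but instead of controlling the exact value $\osc(A(i),B(j))$ for each pair, to control only the position $i_0$ of the leading bit of the binary expansion of $\osc(A(i),B(j))$, and to run the construction long enough that this leading position can be pushed to any prescribed value of the form $e^{-1}(k,\ell)$. Concretely, fix $k<\omega$. I want to find $A,B\in\mathfrak{A}$ such that for all $i,j<n$ the integer $\osc(A(i),B(j))$ has the same leading binary exponent $i_0$, and that $i_0$ satisfies $\pi_0(e(i_0))=k$; then by definition $o(A(i),B(j))=k$ for all $i,j$, as required.

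First I would observe that the conclusion of Theorem \ref{h} is exactly the tool needed, but it is slightly too rigid: it delivers $\osc(A(i),B(j))=h(i,j)+k'$, and the offsets $h(i,j)$ need not all be equal, so the leading exponents of $h(i,j)+k'$ need not coincide across pairs. The fix is to choose $k'$ very large compared to $\max_{i,j}h(i,j)$ and of a special form. Pick any $m$ large enough that $2^m>\max_{i,j}h(i,j)$, and pick $\ell<\omega$ with $\pi_0(e(m))=k$ — here I invoke the fact that $e:\omega\to\omega\times\omega$ is a bijection, so some index $m$ maps to a pair with first coordinate $k$, and in fact, since there are infinitely many such indices (one for each second coordinate), I may take $m$ arbitrarily large, hence larger than the logarithm of every $h(i,j)$. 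Now set $k'=2^m$ and apply Theorem \ref{h} with this $k'$: we get $A\neq B$ in $\mathfrak{A}$ with $\osc(A(i),B(j))=h(i,j)+2^m$ for all $i,j<n$. Since $0\le h(i,j)<2^m$, the binary expansion of $h(i,j)+2^m$ has leading term exactly $2^m$, so its leading exponent $i_0$ equals $m$ for every pair $(i,j)$. Therefore $o(A(i),B(j))=\pi_0(e(m))=k$ for all $i,j<n$.

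The main (and essentially only) obstacle is the bookkeeping point that one must be able to choose the leading exponent $m$ simultaneously (a) large enough to dominate all the finitely many offsets $h(i,j)$ and (b) mapped by $\pi_0\circ e$ to the target value $k$; this is where the surjectivity of $e$ onto $\omega\times\omega$, together with $\omega\times\{k\}$ being infinite, is used. Everything else is immediate from Theorem \ref{h} and the definition of $o$. I would also note that the same argument works uniformly whether $\mathfrak{A}\subseteq[\mathscr{F}]^n$ consists of finite or infinite sequences, since Theorem \ref{h} is stated for $\mathscr{F}\subseteq(\omega)^{\omega}$ regardless, and the definition of $o$ on pairs from $(\omega)^{\omega}$ is the relevant one here.
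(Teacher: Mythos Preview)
Your proof is correct and follows essentially the same approach as the paper's own proof: apply Theorem~\ref{h} to obtain the offset function $h$, then choose $m$ large enough that $2^m$ dominates every $h(i,j)$ and satisfies $\pi_0(e(m))=k$ (using that $e^{-1}(\{k\}\times\omega)$ is infinite), and finally invoke Theorem~\ref{h} with $k'=2^m$ so that the leading binary exponent of each $\osc(A(i),B(j))=h(i,j)+2^m$ is exactly $m$. The only cosmetic difference is that the paper phrases the largeness condition on $m$ via the exponents $l_{i,j}=\lfloor\log_2 h(i,j)\rfloor$ rather than directly as $2^m>\max_{i,j}h(i,j)$, but these are equivalent.
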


\begin{proof} Given $k$, consider the function $h: n\times n\to\omega$
of Theorem \ref{h}. For all $i,j<n,$ let $l_{i,j}$ be the largest integer
such that $2^{l_{i,j}}\leq h(i,j)$ and let $l=\max \{l_{i,j}; i,j<n\}$.
The set $\{m:  \exists p(e(m)=(k,p))\}$ is infinite so we can find $m>l$
such that $\pi_0\circ e(m)=k.$ By definition of $h$ there exist two different
$A,B\in \mathfrak{A}$ such that $\osc(A(i),B(j))=h(i,j)+2^m,$ for all $i,j<n$.
It follows that $o(A(i),B(j))=\pi_0\circ e(m)=k,$ for all $i,j<n.$ This completes the proof.
\qed
\end{proof}

Finally we want to be able to choose the color of $\{A(i), B(j)\}$
independently for all $i,j$. First we need the following lemma.

\begin{lemma}\label{these} Given $\mathfrak{A}\subseteq [\mathscr{F}]^n$ an unbounded family of parwise disjoint sets.
There are $k<\omega$ and $\mathfrak{A}^*\subseteq \mathfrak{A}$ unbounded such that, for all $i<n$,
there exists $a_i\in (\omega)^k$ such that $A(i)\restr k=a_i,$ for all $A\in \mathfrak{A}^*$ and
$a_i\neq a_j,$ for all $i\neq j<n.$
\end{lemma}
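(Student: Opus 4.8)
The plan is to extract the common initial part of the $n$-tuples by a compactness-cum-pigeonhole argument. For each $A \in \mathfrak{A}$, the tuple $A = \langle A(0), \dots, A(n-1)\rangle$ consists of pairwise disjoint members of $\mathscr{F}$. First I would observe that for each fixed $k$, the map $A \mapsto \langle A(i)\restr k\rangle_{i<n}$ takes values in the countable set $((\omega)^{\leq k})^n$; hence for every $k$ there is a single tuple $\langle a_0^k, \dots, a_{n-1}^k\rangle$ such that $\mathfrak{A}_k := \{A \in \mathfrak{A} : A(i)\restr k = a_i^k \text{ for all } i<n\}$ is unbounded (under $\leq_*$). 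This uses Lemma \ref{claim} $n$ times, or rather the obvious fact that an unbounded family cannot be split into countably many bounded pieces. Note the $a_i^k$ are coherent: $a_i^{k'} \restr k = a_i^k$ whenever $k' \geq k$ and we keep passing to subfamilies, so we may fix an increasing-in-$k$ chain of unbounded subfamilies $\mathfrak{A} \supseteq \mathfrak{A}_1 \supseteq \mathfrak{A}_2 \supseteq \cdots$ with $\mathfrak{A}_k$ deciding the first $k$ coordinates of every $A(i)$.

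The key point is then that the limits $a_i := \bigcup_k a_i^k$ are \emph{distinct} as elements of $[\mathbb{N}]^{\omega}$ (or at least become distinct at some finite level), because each $A \in \bigcap_k \mathfrak{A}_k$ — if this intersection were nonempty — would have $A(i) = a_i$, and the $A(i)$ are pairwise disjoint. More carefully, since $a_i \sqsubseteq A(i)$ for every $A \in \mathfrak{A}_k$ of length $\geq k$, and any two members of a disjoint tuple differ already on a finite initial segment, I would argue: pick any $A$ in $\mathfrak{A}_k$ for $k$ large; then $A(i) \cap A(j) = \emptyset$, so $A(i)\restr k$ and $A(j)\restr k$ are already distinct finite sequences once $k$ exceeds the (finite) first point of disagreement, which for a disjoint pair of infinite sets is at most $\min(A(i)(0), A(j)(0)) + 1$. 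Since there are only finitely many pairs $i \neq j < n$, and $\mathfrak{A}_k$ is shrinking, I need to make sure a single $k$ works for all pairs simultaneously: this follows by first shrinking $\mathfrak{A}$ to an unbounded subfamily on which $\max_i A(i)(0)$ is bounded by a fixed integer $N$ (again by Lemma \ref{claim}, splitting $\mathfrak{A}$ into countably many pieces according to the value of that max), and then taking $k = N+1$. On that subfamily all $A(i)\restr k$ are determined (pass to the unbounded sub-subfamily deciding them) and pairwise distinct.

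So the final recipe is: shrink $\mathfrak{A}$ once to control the first elements, shrink again to fix $A(i)\restr k$ for the resulting $k$, set $\mathfrak{A}^* $ to be the last unbounded subfamily and $a_i = A(i)\restr k$. The main obstacle is the bookkeeping that makes a \emph{single} $k$ work uniformly for all $n$ tuple-coordinates and all $\binom{n}{2}$ pairs at once; this is handled by the preliminary "bound the first coordinates" step, after which everything is finite and the pigeonhole via Lemma \ref{claim} closes the argument. No deep machinery is needed beyond Lemma \ref{claim} and the disjointness hypothesis. \qed
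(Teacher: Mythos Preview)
Your argument rests on a misreading of the hypothesis. The phrase ``pairwise disjoint'' qualifies the family $\mathfrak{A}$: distinct $A,B\in\mathfrak{A}$ satisfy $A\cap B=\emptyset$ as $n$-element subsets of $\mathscr{F}$. It does \emph{not} say that within a single $A$ the coordinates $A(0),\ldots,A(n-1)$ are disjoint as subsets of $\omega$; they are only distinct elements of $(\omega)^{\omega}$. Your assertion ``$A(i)\cap A(j)=\emptyset$'' is therefore unjustified, and with it the claim that the first point of disagreement between $A(i)$ and $A(j)$ is controlled by $\max_i A(i)(0)$. Two distinct strictly increasing sequences can agree on an arbitrarily long initial segment regardless of where they start, so bounding the first values gives no handle on the separation level $k$.

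The repair is easy and stays within your pigeonhole framework: for each $A\in\mathfrak{A}$ let $m(A)$ be the least $m$ such that $A(0)\restr m,\ldots,A(n-1)\restr m$ are pairwise distinct (finite, since the $A(i)$ are distinct functions); apply Lemma~\ref{claim} to pass to an unbounded $\mathfrak{A}'\subseteq\mathfrak{A}$ on which $m(A)$ is a constant $k$, then pigeonhole once more on the countably many possible values of $\langle A(i)\restr k\rangle_{i<n}$ to obtain the desired $\mathfrak{A}^*$ and $a_i$'s, which are now automatically distinct. For comparison, the paper proceeds by induction on $n$, using the superperfectness of $T_X$ (Lemma~\ref{aaaaa}) for $X=\{A(n):A\in\mathfrak{B}\}$ to choose an $a_n$ with $a_n(k-1)$ strictly above the earlier $a_i(k-1)$; your corrected approach is more direct, but the step you actually wrote does not go through.
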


\begin{proof}We prove it by induction on $n<\omega$. It is trivial for $n=1.$ Assume that the statement
is true for $n,$ we prove it for $n+1.$ Given $\mathfrak{A}\subseteq [\mathscr{F}]^{n+1},$ let be
$k<\omega$, $\mathfrak{A}'\subseteq \mathfrak{A}\restr n,$ and $\{ a_i\}_{i<n}$ as in
the conclusion of the lemma for $\mathfrak{A}\restr n.$
The set $\mathfrak{B}=\{A\in \mathfrak{A}:  A\restr n\in \mathfrak{A}'\}$ is unbounded,
hence $X=\{A(n):  A\in \mathfrak{B}\}$ is also unbounded. By Lemma \ref{aaaaa}, we have that
$T_{X}$ is superperfect, so let $b$ be the least $\infty$-splitting node of $T_{X}.$ We can assume without
loss of generality that $\vert b\vert<k$.
Take any $a_n\sqsupseteq b$ in $T_X$ such that $\vert a_n\vert=k$ and $a_n(k -1)> \max \{ a_i(k -1):  i<n \}$.
Then $a_n\neq a_i,$ for all $i<n.$
Recall that $T_X=\{s\in (\omega)^{<\omega}:  \{f\in X:  s\sqsubseteq f\}\textrm{ is unbounded}\}$,
thus $\mathfrak{A}^*=\{B\in \mathfrak{B}:  a_n\sqsubseteq B(n)\}$ is unbounded. This completes the proof.
\qed
\end{proof}


Consider all finite functions $t: D\times E\to \omega$ where $D,E\subseteq (\omega)^k$ and $k$ is an integer.
Let $\{(t_n, D_n, E_n, k_n)\}_{n<\omega}$ be any enumeration of such functions. 
We define $c:[\mathscr{F}]^2\to \omega$ as follows: given $f,g\in \mathscr{F}$ and letting $n=o(f,g)$, we define
$$c(f,g)=\left\{ \begin{array}{ll}
t_n(f\restr k_n, g\restr k_n)&\textrm{if $f\restr k_n\in D_n$ and $g\restr k_n\in E_n$}\\
0&\textrm{otherwise}
\end{array}\right. $$

\begin{theorem}[\cite{Todorcevic}]
Given an unbounded family $\mathfrak{A}\subseteq [\mathscr{F}]^n$ of pairwise disjoint sets,
and an arbitrary $u: n\times n\to \omega,$ there are two different $A,B\in \mathfrak{A}$ such that $c(A(i), B(j))=u(i,j)$,
for all $i,j<n$.
\end{theorem}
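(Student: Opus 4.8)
The plan is to chain together Lemma~\ref{these} and Lemma~\ref{sette}: use Lemma~\ref{these} to freeze the initial segments of the $n$-tuples along a suitable unbounded subfamily, use Lemma~\ref{sette} to gain control over the auxiliary coloring $o$ on that subfamily, and then pick the bookkeeping function $t$ out of the enumeration so that the definition of $c$ decodes to exactly $u$.

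First I would apply Lemma~\ref{these} to $\mathfrak{A}$ to obtain an integer $k$, an unbounded subfamily $\mathfrak{A}^{*}\subseteq\mathfrak{A}$, and sequences $a_{0},\dots,a_{n-1}\in(\omega)^{k}$ with $a_{i}\neq a_{j}$ for $i\neq j<n$ and $A(i)\restr k=a_{i}$ for every $A\in\mathfrak{A}^{*}$ and every $i<n$. Note that $\mathfrak{A}^{*}$ is still an unbounded family of pairwise disjoint $n$-tuples from $\mathscr{F}$, so Lemma~\ref{sette} (and hence Theorem~\ref{h}) applies to it. Now set $D=E=\{a_{i}:i<n\}\subseteq(\omega)^{k}$ and define $t\colon D\times E\to\omega$ by $t(a_{i},a_{j})=u(i,j)$; this is well defined precisely because the $a_{i}$ are pairwise distinct. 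Since $(t,D,E,k)$ is one of the finite functions being enumerated, we have $(t,D,E,k)=(t_{m},D_{m},E_{m},k_{m})$ for some $m<\omega$.

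Next I would apply Lemma~\ref{sette} to the family $\mathfrak{A}^{*}$ with the value $m$: there are distinct $A,B\in\mathfrak{A}^{*}$ with $o(A(i),B(j))=m$ for all $i,j<n$. It remains to unwind the definition of $c$. Fix $i,j<n$. Since $o(A(i),B(j))=m$, by definition $c(A(i),B(j))=t_{m}(A(i)\restr k_{m},B(j)\restr k_{m})$ provided $A(i)\restr k_{m}\in D_{m}$ and $B(j)\restr k_{m}\in E_{m}$. But $k_{m}=k$, and membership in $\mathfrak{A}^{*}$ forces $A(i)\restr k=a_{i}\in D=D_{m}$ and $B(j)\restr k=a_{j}\in E=E_{m}$, so those side conditions hold automatically. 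Hence $c(A(i),B(j))=t(a_{i},a_{j})=u(i,j)$, which is exactly what is required.

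All the combinatorial work sits inside the two lemmas already established, so the main obstacle here is purely organizational: checking that the subfamily $\mathfrak{A}^{*}$ delivered by Lemma~\ref{these} still satisfies the hypotheses of Theorem~\ref{h} so that Lemma~\ref{sette} can be invoked on it, and observing that the domain/range guards in the definition of $c$ are satisfied for free because $\mathfrak{A}^{*}$ was chosen so that every $A(i)$ starts with the prescribed $a_{i}$. No growth estimate on $m$ is needed, since Lemma~\ref{sette} realizes every value of $o$ on $\mathfrak{A}^{*}$.
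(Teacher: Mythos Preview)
Your proof is correct and follows essentially the same approach as the paper's own proof: apply Lemma~\ref{these} to obtain $k$, $\mathfrak{A}^{*}$ and the $a_i$'s, define $t$ on $D\times D$ by $t(a_i,a_j)=u(i,j)$, locate its index $m$ in the enumeration, and then invoke Lemma~\ref{sette} on $\mathfrak{A}^{*}$ with the value $m$. Your write-up is slightly more explicit about why the side conditions in the definition of $c$ are met, but the argument is identical in substance.
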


\begin{proof} Take $k<\omega,$ $\mathfrak{A}^*$ and $\{a_i\}_{i<n}$ as in the conclusion of Lemma \ref{these}
and let $D=\{a_i:  i<n\}$. Consider the function $t:D\times D\to \omega$ defined by $t(a_i,a_j)=u(i,j)$, for all $i,j<n$.
 Assume that $(t_m, D_m, E_m, k_m)$ is the corresponding sequence in the previous enumeration.
 By Lemma \ref{sette} there exist different $A,B\in \mathfrak{A}^*$ such that $o(A(i),B(j))=m,$ for all $i,j<n.$
 It follows that $u(i,j)=t(a_i,a_j)=t_m(A(i)\restr k_m, B(j)\restr k_m)=c(A(i),B(j)).$ This completes the proof.
 \qed
\end{proof}

\begin{corollary} There is a $\mathfrak{b}-c.c.$ partial order whose square is not $\mathfrak{b}-c.c.$
\qed
\end{corollary}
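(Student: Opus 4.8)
The plan is to construct a $\mathfrak{b}$-chain condition poset $\mathbb{P}$ from the coloring $c:[\mathscr{F}]^2\to\omega$ of the preceding theorem in such a way that the square $\mathbb{P}\times\mathbb{P}$ fails to be $\mathfrak{b}$-c.c., and then verify both properties. The natural candidate is a finitely-branching ``coloring poset'': conditions should be finite subsets $p\subseteq\mathscr{F}$ that are $0$-homogeneous for $c$, i.e. such that $c(f,g)=0$ for all $f\neq g$ in $p$, ordered by reverse inclusion. More precisely, to make the square argument work I would use two colors, say splitting the intended value set of $c$ into a "left" piece and a "right" piece: let $\mathbb{P}_0$ consist of finite $p\subseteq\mathscr{F}$ with $c''[p]^2\subseteq\{0\}$ and $\mathbb{P}_1$ consist of finite $p\subseteq\mathscr{F}$ with $c''[p]^2\subseteq\{1\}$, each ordered by $\supseteq$. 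I expect one can arrange, via the freedom in choosing the enumeration $\{(t_n,D_n,E_n,k_n)\}$, that $\mathbb{P}_0\cong\mathbb{P}_1$ (or that a single poset $\mathbb{P}$ plays both roles), so it suffices to show $\mathbb{P}_0$ is $\mathfrak{b}$-c.c.\ while $\mathbb{P}_0\times\mathbb{P}_1$ is not.

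\textbf{Step 1: the square is not $\mathfrak{b}$-c.c.} I would exhibit $\mathfrak{b}$-many pairwise incompatible conditions in $\mathbb{P}_0\times\mathbb{P}_1$. Take a family $\mathfrak{A}\subseteq[\mathscr{F}]^2$ of cardinality $\mathfrak{b}$ consisting of pairwise disjoint pairs $A=\{A(0),A(1)\}$; for each $A\in\mathfrak{A}$ consider the pair of conditions $\big(\{A(0)\},\{A(1)\}\big)$. Given two distinct $A,B\in\mathfrak{A}$, compatibility of $(\{A(0)\},\{A(1)\})$ and $(\{B(0)\},\{B(1)\})$ in the product would force $\{A(0),B(0)\}$ to be $0$-homogeneous and $\{A(1),B(1)\}$ to be $1$-homogeneous simultaneously; but one should be able to choose $\mathfrak{A}$ (and the coloring) so that $c(A(0),B(0))$ and $c(A(1),B(1))$ can never be $0$ and $1$ respectively for all pairs — or, dually, invoke the last theorem to find, for the specific $u$ one wants, pairs realizing incompatible demands. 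Actually the cleanest route is: if the product were $\mathfrak{b}$-c.c., then among any $\mathfrak{b}$-many such product conditions two would be compatible, giving $A\neq B$ with $c(A(0),B(0))=0=c(A(1),B(1))=1$ after relabelling — a contradiction once we set things up so these color constraints are genuinely contradictory. I would make this precise by choosing the two posets to demand color $0$ on the first coordinate and color $1$ on the second, and using that by the final theorem every $u:2\times2\to\omega$ is realized, so in particular one can never avoid the bad configuration.

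\textbf{Step 2: $\mathbb{P}_0$ is $\mathfrak{b}$-c.c.} Suppose $\{p_\xi:\xi<\mathfrak{b}\}$ is a family of conditions; by a $\Delta$-system / counting argument using $\mathrm{cf}(\mathfrak{b})=\mathfrak{b}$ (and refining so all $p_\xi$ have the same size $n$, the same "root" of agreement, and the same restrictions $p_\xi(i)\restr k$ below some fixed level) I reduce to a family $\mathfrak{A}\subseteq[\mathscr{F}]^n$ of pairwise disjoint $n$-tuples of size $\mathfrak{b}$. Now apply the final theorem with the function $u\equiv 0$: there are distinct $A,B\in\mathfrak{A}$ with $c(A(i),B(j))=0$ for all $i,j<n$. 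Together with $c''[A]^2\subseteq\{0\}$ and $c''[B]^2\subseteq\{0\}$ (since $A,B$ came from conditions), this shows $A\cup B$ is $0$-homogeneous, i.e. $p_A\cup p_B\in\mathbb{P}_0$ is a common extension. Hence no $\mathfrak{b}$-sized antichain exists.

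\textbf{The main obstacle} is the bookkeeping in Step 1: one must set up the two posets (or the single coloring) so that the product incompatibility is truly unavoidable rather than merely "not obviously satisfiable". The honest way is to define $\mathbb{P}$ using a coloring $c$ into $\omega$ as above and consider $\mathbb{P}\times\mathbb{P}$ with conditions required to be $0$-homogeneous on both factors but where the two factors are linked: take product conditions $(\{f\},\{g\})$ ranging over a disjoint family, and note that a common extension of $(\{A(0)\},\{A(1)\})$ and $(\{B(0)\},\{B(1)\})$ needs $c(A(0),B(0))=c(A(1),B(1))=0$ \emph{and} (by incorporating a "diagonal" demand into $c$ via the enumeration) $c(A(0),B(1))\neq c(A(1),B(0))$ or some such asymmetric constraint that the last theorem shows cannot hold uniformly. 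I would choose the concrete asymmetric requirement so that realizing $u\equiv 0$ on an $n=2$ family already refutes $\mathfrak{b}$-c.c.\ of the square while the same realization gives $\mathfrak{b}$-c.c.\ of each factor — the delicacy is purely in selecting that requirement and checking the product antichain really has size $\mathfrak{b}$. Once the setup is fixed, both verifications are immediate applications of the final theorem with $u\equiv 0$ (for the factor) and the impossibility of a certain $u$ (for the square), modulo the standard $\Delta$-system refinement.
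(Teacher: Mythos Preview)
Your Step~2 is correct and is exactly the intended application of the theorem: given $\mathfrak{b}$-many conditions in $\mathbb{P}_0$, a $\Delta$-system refinement (using that $\mathfrak{b}$ is regular) produces an unbounded pairwise-disjoint family $\mathfrak{A}\subseteq[\mathscr{F}]^n$, and the theorem with $u\equiv 0$ yields two compatible conditions. The same works for $\mathbb{P}_1$.

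Step~1, however, is where you lose the thread. You take pairs $A=\{A(0),A(1)\}$ with $A(0)\neq A(1)$ and then struggle to make ``$c(A(0),B(0))=0$ and $c(A(1),B(1))=1$'' into a contradiction --- but it is not one, and no amount of ``diagonal demands'' built into the enumeration will manufacture one. The trick you are missing is far simpler: take the \emph{diagonal} family
\[
\bigl\{\,(\{f\},\{f\}) : f\in\mathscr{F}\,\bigr\}\subseteq \mathbb{P}_0\times\mathbb{P}_1.
\]
If $(\{f\},\{f\})$ and $(\{g\},\{g\})$ were compatible in $\mathbb{P}_0\times\mathbb{P}_1$, then $\{f,g\}\in\mathbb{P}_0$ forces $c(f,g)=0$ while $\{f,g\}\in\mathbb{P}_1$ forces $c(f,g)=1$. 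This is an honest contradiction, so the diagonal is an antichain of size $\mathfrak{b}$. No appeal to the theorem is needed here at all.

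Finally, you gesture at arranging $\mathbb{P}_0\cong\mathbb{P}_1$ to get a single $\mathbb{P}$ whose square fails $\mathfrak{b}$-c.c., but there is no reason these posets should be isomorphic. The standard device is the lottery sum: let $\mathbb{P}$ be the disjoint union of $\mathbb{P}_0$ and $\mathbb{P}_1$ under a common top element. Any antichain in $\mathbb{P}$ splits into an antichain in $\mathbb{P}_0$ and one in $\mathbb{P}_1$, hence has size $<\mathfrak{b}$; but $\mathbb{P}\times\mathbb{P}$ contains a copy of $\mathbb{P}_0\times\mathbb{P}_1$ and therefore the diagonal antichain above. This is presumably why the paper records the corollary without proof.
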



The following question is still open.

\begin{question} Can we do the same for some other cardinal invariant such as $\mathfrak{t}$ or $\mathfrak{p}$?
\end{question}

\section{Partitions of countable ordinals}

Oscillations provide the main tool for constructing partitions of pairs of countable ordinals
with very strong properties. The goal of this section is to present the construction of an $L$-space
due to Moore \cite{Moore} which uses oscillations in an ingenious way.
In order to motivate this construction we start with a simple example.

For each limit $\alpha<\omega_1$, fix $c_{\alpha}\subseteq \alpha$ cofinal of order type $\omega$.
As before, we will view $c_{\alpha}$ both as a set and as an $\omega$-sequence which enumerates it
in increasing order. Thus, we will write $c_{\alpha}(n)$ for the $n$-th element of $c_{\alpha}$.
The sequence $\langle c_{\alpha}:  \alpha<\omega_1, \lim(\alpha)\rangle$ is called a $\vec c$-sequence.

We can generalize the definition of $\osc$ as follows: for $f,g\in (\omega_1)^{\leq \omega}$,
$$
\osc(f,g)=\vert \{n <\omega :  f(n)\leq g(n) \land f(n+1)>g(n+1)\}\vert.
$$
Given a subset $S$ of $\omega_1$ consisting of limit ordinals, let
$$
U_S=\{ s \in [\omega_1]^{<\omega}: \{\alpha\in S : s\sqsubseteq c_{\alpha}\} \textrm{ is stationary}\}.
$$

\begin{lemma}\label{pdl} Assume $S\subseteq \omega_1$ is stationary. Then $U_S$ is an $\omega_1$-superperfect tree.
\end{lemma}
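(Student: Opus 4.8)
The plan is to verify the two defining properties of an $\omega_1$-superperfect tree for $U_S$: that it is closed under initial segments (so is genuinely a subtree of $[\omega_1]^{<\omega}$), and that above every node there is an $\omega_1$-splitting node. Closure under initial segments is immediate: if $s\sqsubseteq t$ and $\{\alpha\in S: t\sqsubseteq c_\alpha\}$ is stationary, then this set is contained in $\{\alpha\in S: s\sqsubseteq c_\alpha\}$, which is therefore stationary as well; also $\emptyset\in U_S$ since $S$ itself is stationary. So the content of the lemma is the splitting property.

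For the splitting property, fix $s\in U_S$, so $T=\{\alpha\in S: s\sqsubseteq c_\alpha\}$ is stationary. First I would argue that there is an extension $t\sqsupseteq s$ in $U_S$ that is $\omega_1$-splitting. The natural idea is a pressing-down argument. Suppose toward a contradiction that no extension of $s$ in $U_S$ is $\omega_1$-splitting. Consider, for each $\alpha\in T$, the node $c_\alpha\restriction(|s|+1)$ — its $|s|$-th entry is $c_\alpha(|s|)$, an ordinal $<\alpha$. By Fodor's lemma applied on $T$ (refining if necessary along $c_\alpha(|s|)$, or rather: observing that for each fixed ordinal $\xi$, $\{\alpha\in T: c_\alpha(|s|)=\xi\}$ being stationary would put $s\wh\xi\in U_S$), we get that the function $\alpha\mapsto c_\alpha(|s|)$ is not bounded on a stationary subset by any single value, yet if $s$ has no $\omega_1$-splitting extension then for every $t\sqsupseteq s$ in $U_S$ the set of values $t(|t|)$ over extensions of $t$ in $U_S$ is bounded below some $\gamma_t<\omega_1$. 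One then builds, level by level, bounds $\gamma_t$ for all $t\sqsupseteq s$ in $U_S$; since there are only countably many such $t$ of each relevant height and we can take suprema, one obtains a single ordinal $\gamma<\omega_1$ with the property that every $c_\alpha$, $\alpha\in T$, satisfies $c_\alpha(n)<\gamma$ for all $n$ — contradicting that $c_\alpha$ is cofinal in $\alpha$ for $\alpha>\gamma$ (and there are stationarily many such $\alpha$ in $T$).

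I expect the main obstacle to be making the last bookkeeping precise: one must be careful that the splitting one needs is along a \emph{single} node $t$ (the value $t(|t|)$ ranges over an unbounded set of ordinals), not merely that the tree as a whole has unboundedly large entries. The clean way is the contrapositive: if for a fixed $t\sqsupseteq s$ in $U_S$ and every $\xi<\omega_1$ the set $\{\alpha\in S: t\wh\xi\sqsubseteq c_\alpha\}$ is nonstationary, pick club $C_\xi$ avoiding it; then $\triangle_{\xi<\omega_1}C_\xi$ is club, and for $\alpha$ in it with $\alpha>\sup(t)$ we get $t\wh c_\alpha(|t|)\not\sqsubseteq c_\alpha$ whenever $c_\alpha(|t|)<\alpha$ — but $c_\alpha(|t|)<\alpha$ always, a contradiction with $t\sqsubseteq c_\alpha$. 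Hence every $t\sqsupseteq s$ in $U_S$ is already $\omega_1$-splitting, and in particular $s$ itself has an $\omega_1$-splitting extension (namely $s$, or any extension in $U_S$). This diagonal-intersection formulation sidesteps the level-by-level construction entirely and is the version I would write up. The only remaining point is to note that $U_S$ is nonempty above $s$ in the first place, which holds because $T$ is stationary and, by the same diagonal argument, some one-step extension $s\wh\xi$ lies in $U_S$.
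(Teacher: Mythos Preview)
Your diagonal-intersection argument does not prove what you claim. Under the hypothesis ``for every $\xi<\omega_1$ the set $\{\alpha\in S:t\wh\xi\sqsubseteq c_\alpha\}$ is nonstationary'', the contradiction you derive shows only that every $t\in U_S$ has \emph{some} immediate successor in $U_S$, not that $t$ is $\omega_1$-splitting (which requires $t\wh\xi\in U_S$ for cofinally many $\xi<\omega_1$). Your sentence ``Hence every $t\sqsupseteq s$ in $U_S$ is already $\omega_1$-splitting'' is a non sequitur, and the conclusion is in fact false: if the $\vec c$-sequence is chosen with $c_\alpha(0)=0$ for every limit $\alpha$, then the root $\emptyset\in U_S$ has $\langle 0\rangle$ as its unique immediate successor and is not $\omega_1$-splitting. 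The diagonal intersection cannot be repaired to give the stronger conclusion, because for $\alpha$ in the intersection you only control $C_\xi$ for $\xi<\alpha$, and nothing forces $c_\alpha(|t|)$ to exceed a prescribed bound $\eta$; if $c_\alpha(|t|)\leq\eta$ on a stationary set there is no contradiction.

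The paper's proof is essentially your first, abandoned, approach. One shows that for every $s\in U_S$ there is $n$ with $\sup\{t(n):s\sqsubseteq t\in U_S\}=\omega_1$: otherwise let $\alpha$ be the (countable) supremum of all these suprema, and on the stationary set of $\delta>\alpha$ with $s\sqsubseteq c_\delta$ press down on $\delta\mapsto c_\delta\restriction(n_\delta+1)$, where $n_\delta$ is least with $c_\delta(n_\delta)>\alpha$, to produce $t\in U_S$ with $s\sqsubseteq t$ and $\max t>\alpha$, a contradiction. Taking the least such $n$, the length-$n$ extensions of $s$ in $U_S$ have all coordinates bounded by the $\alpha_{s,m}<\omega_1$ for $m<n$, hence there are only countably many of them; since the union of their immediate-successor values is unbounded, one of them is $\omega_1$-splitting. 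So the bookkeeping you flagged is exactly what is needed, and there is no shortcut via diagonal intersection.
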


\begin{proof}
Given $s\in U_S$ let $(U_S)_s = \{ t\in U_S: s\sqsubseteq t\}$ and let
$\alpha_{s,n}= \sup \{ t(n): t\in (U_S)_s\}$. Then there is $n$ such that
$\alpha_{s,n}=\omega_1$. To see this, assume otherwise and  let $\alpha=\sup \{ \alpha_{s,n}: n<\omega\}$.
Then $\alpha <\omega_1$. For each $\delta \in S\setminus (\alpha +1)$ such that $s\sqsubseteq c_{\delta}$
let $n_{\delta}$ be the least integer such that $c_{\delta}(n_\delta)>\alpha$.
By the Pressing Down Lemma, there is $t\in [\omega_1]^{<\omega}$ such that
$s\sqsubseteq t$ and
the set $\{ \delta \in S:   c_{\delta}\restriction (n_{\delta}+1)=t\}$
is stationary. It follows that $s\sqsubseteq t\in U_S$ and $\max(t) >\alpha$, a contradiction.
\qed
\end{proof}

\begin{lemma} Given two stationary sets $S,T\subseteq \omega_1$,
there is $n_0<\omega$ such that for all $k<\omega$ there exist $\alpha\in S$ and $\beta\in T$ such that
$\osc (c_{\alpha}, c_{\beta})=n_0+k$.
\end{lemma}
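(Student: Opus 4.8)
The plan is to run, in the trees $U_S$ and $U_T$, the same sort of argument used for Lemma~\ref{estendereinXY}, and then to convert a pair of $\omega_1$-splitting nodes carrying the prescribed oscillation into an actual pair $c_\alpha, c_\beta$ via stationarity.

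I would begin by recording the ingredients. By Lemma~\ref{pdl}, both $U_S$ and $U_T$ are $\omega_1$-superperfect subtrees of $[\omega_1]^{<\omega}$, and every node of $U_S$ or $U_T$ is an initial segment of a strictly increasing $\omega$-sequence $c_\gamma$. Next I would observe that the proof of Lemma~\ref{estendereinodi} transfers verbatim to $\omega_1$-superperfect subtrees of $[\omega_1]^{<\omega}$, reading ``$\omega_1$-splitting'' for ``$\infty$-splitting'': the construction only uses that from a splitting node one may extend so that the value at the next coordinate exceeds any prescribed ordinal below $\omega_1$, and the oscillation bookkeeping only uses that the sequences are strictly increasing. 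Running that construction from the least $\omega_1$-splitting nodes $s_0 \in U_S$, $t_0 \in U_T$, and adding one further controlled extension (on whichever of the two nodes is shorter, pushing its length and its next value past those of the other) to arrange the length inequality the right way, I obtain a constant $n_0 < \omega$ such that: for every $k < \omega$ there are $\omega_1$-splitting nodes $s \in U_S$ and $t \in U_T$ with $|t| \le |s|$ and $\osc(s,t) = n_0 + k$. (This last extension may shift the additive constant by $1$; harmless, since only the existence of \emph{some} $n_0$ is asserted.)

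Given $k < \omega$, fix $s$ and $t$ as above. Since $s \in U_S$, the set $\{\alpha \in S : s \sqsubseteq c_\alpha\}$ is stationary, hence nonempty; fix such an $\alpha$. Since $t$ is $\omega_1$-splitting in $U_T$, there is $u \in U_T$ with $t \sqsubseteq u$ and $u(|t|) > \alpha$; and since $u \in U_T$, the set $\{\beta \in T : u \sqsubseteq c_\beta\}$ is stationary, so fix such a $\beta$. Because $c_\alpha \subseteq \alpha$, every value of $c_\alpha$ is below $\alpha < u(|t|) = c_\beta(|t|)$, and $c_\beta$ is increasing, so $c_\beta(m) > c_\alpha(n)$ for all $m \ge |t|$ and all $n < \omega$. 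Hence no index $n \ge |t|-1$ contributes to $\osc(c_\alpha, c_\beta)$, while for $n \le |t|-2$ one has $c_\alpha(n) = s(n)$ and $c_\beta(n) = t(n)$ (this is where $|t| \le |s|$ is used). Thus the contributing indices for $\osc(c_\alpha, c_\beta)$ are exactly those for $\osc(s,t)$, so $\osc(c_\alpha, c_\beta) = \osc(s,t) = n_0 + k$, as desired.

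The only delicate point I anticipate is the length bookkeeping: Lemma~\ref{estendereinodi} naturally produces $\omega_1$-splitting nodes with the \emph{shorter} one at the bottom, whereas to read $\osc(c_\alpha, c_\beta)$ off $\osc(s,t)$ I need $|t| \le |s|$, so that all coordinates of $c_\beta$ that matter for the oscillation are already determined by $t$ (those of $c_\alpha$ beyond $s$ being uncontrolled). This is exactly what the extra controlled extension fixes, at the cost of absorbing a $\pm 1$ into $n_0$. Everything else is a routine transcription of the $(\omega)^{<\omega}$-arguments of this section, together with the defining stationarity property of $U_S$ and $U_T$.
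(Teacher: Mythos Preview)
Your proof is correct and follows essentially the same approach as the paper. The only difference is that the paper keeps the inequality $|s'|\le |t'|$ with $s'(|s'|-1)\le t'(|s'|-1)$, first picks $\beta\in T$ with $t'\sqsubseteq c_\beta$, then uses the $\omega_1$-splitting of $s'$ to find $\alpha\in S$ with $c_\alpha(|s'|)>\beta$, which yields $\osc(c_\alpha,c_\beta)=\osc(s',t')+1$; you instead arrange $|t|\le |s|$, pick $\alpha$ first, then $\beta$ with $c_\beta(|t|)>\alpha$, and read off $\osc(c_\alpha,c_\beta)=\osc(s,t)$ directly. These are symmetric variants of the same argument, and your extra half-step to flip the length inequality (absorbing a $\pm 1$ into $n_0$) is exactly what the paper avoids by working on the other side; either way the bookkeeping is routine.
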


\begin{proof} By Lemma \ref{pdl} both $U_S$ and $U_T$ are $\omega_1$-superperfect.
Let $s$ and $t$ be the least $\omega_1$-splitting nodes of $U_S$ and $U_T$ respectively.
We may assume that $|s|\leq |t|$ and $s(|s|-1)\leq t(|s|-1)$.
Let $n_0=\osc(s,t)+1$. Now, as in the proof of Lemma \ref{estendereinodi},
given an integer $k$ we can find $\omega_1$-splitting
nodes $s'$ and $t'$ of $U_S$ and $U_T$ respectively, such that $s\sqsubseteq s'$,
$t\sqsubseteq t'$ and $\osc(s',t')=n_0+k-1$. Moreover, we can arrange that
$|s'|\leq |t'|$ and $s'(|s'|-1)\leq t'(|s'|-1)$. Now, pick any $\beta \in T$
such that $t'\sqsubseteq c_\beta$. Since $s'$ is an $\omega_1$-splitting node
of $U_S$, there is $\gamma >\beta$ such that $s'\, \wh \, \gamma \in U_S$.
Pick $\alpha \in S$ such that $s'\, \wh \, \gamma \sqsubseteq c_\alpha$.
It follows that $\osc (c_\alpha,c_\beta)=\osc(s',t')+1=n_0+k$, as desired.
\qed
\end{proof}

We can then improve $\osc$ as before to get some better coloring.
We know that our coloring cannot be as strong as in the case of $\mathfrak{b}$, since $\mathrm{MA}_{\aleph_1}$ implies
that the countable chain condition is productive, so we have to give up some of
the properties of our coloring.

We now present a construction of Moore \cite{Moore} of a coloring
of pairs of countable ordinals witnessing $\omega_1 \not\to [\omega_1 ; \omega_1]^2_{\omega}$
and use it to construct an $L$-space.
As before we fix  a sequence $\langle C_{\alpha} : \alpha<\omega_1\rangle$ such that
\begin{itemize}
\item if $\alpha= \xi+1$, then $C_{\alpha}= \{\xi\}$;
\item if $\alpha$ is limit, then $C_{\alpha}\subseteq \alpha$ is cofinal and of order type $\omega$.
\end{itemize}
Given $\alpha<\beta$ we define the \emph{walk} from $\beta$ to $\alpha$.
We first define a sequence $\beta_0>\beta_1\cdots>\beta_l=\alpha$ as follows:
\begin{itemize}
\item $\beta_0=\beta$;
\item $\beta_{i+1}=\min (C_{\beta_i}\setminus \alpha)$.
\end{itemize}
Then we define $\xi_0\leq \xi_1 \cdots\leq \xi_{l-1}$ by setting
$$
\xi_k=\max \overset{k}{\underset{j=0}{\bigcup}} (C_{\beta_j}\cap \alpha),
$$
for all $k\leq l-1$.
We call $\mathrm{Tr}(\alpha,\beta)=\{\beta_0,...,\beta_l\}$ the \emph{upper trace} and
$L(\alpha,\beta)=\{\xi_0,...,\xi_{l-1}\}$ the \emph{lower trace} of the walk from $\beta$
to $\alpha$.

$$\setlength{\unitlength}{1cm}
\begin{picture}(7.5,6.5)
\linethickness{0.4mm}
\put(0.4,2.7 ){\line(1,0){5.3}}
\put(5.3,5.5 ){\line(1,0){0.4}}
\thinlines
\put(5.8,2.6){$\alpha$}
\put(5.8,5.5){$\beta=\beta_0$}
\put(5.5,1){\line(0,1){4.5}}
\linethickness{0.4mm}
\put(5.3,3.5 ){\line(1,0){0.4}} 
\put(5.8,3.5){$\beta_1$}
\put(5.4,1.8 ){\line(1,0){0.2}}
\put(5.8,1.6){$\xi_0$}
\thinlines
\multiput(3.5,3.5)(0.4,0){5}{\line(1,0){0.2}}
\multiput(1.8,1.8)(0.4,0){10}{\line(1,0){0.2}}
\put(1.2, 1.8){$\xi_0$}
\put(2.75,3.5){$\beta_1$}
\put(1.5,1.5){\circle*{0.1}}
\put(1.8,1.8){\circle*{0.1}}
\put(3.5,3.5){\circle*{0.1}}
\put(4,4){\circle*{0.1}}
\put(4.6,4.6){\circle*{0.1}}
\put(4.9,4.9){\circle*{0.1}}
\put(5.1,5.1){\circle*{0.1}}
\qbezier(6,4)(6.5,4.6)(6,5.2)
\put(6,4){\line(0,1){0.2}}
\put(6,4){\line(1,0){0.2}}
\put(1.3,1.5){\line(1,1){4}}
\put(1.5,1.3){\line(1,1){4.2}}
\qbezier(1.3,1.5)(1.3,1.3)(1.5,1.3)
\put(4.3,5.3){$C_{\beta}$}
\put(3.5,1.5){\circle*{0.1}}
\put(4,2){\circle*{0.1}}
\put(4.8,2.8){\circle*{0.1}}
\put(5.2,3.2){\circle*{0.1}}
\put(5.4,3.4){\circle*{0.1}}
\put(3.3,1.5){\line(1,1){2}}
\put(3.5,1.3){\line(1,1){2.2}}
\qbezier(3.3,1.5)(3.3,1.3)(3.5,1.3)
%
\multiput(4,2)(0.4,0){4}{\line(1,0){0.2}}
\put(5.8,2.1){$\xi_1$}
\linethickness{0.4mm}
\put(5.4, 2){\line(1,0){0.2}}
\end{picture}
$$

\begin{lemma}\label{fact1} Suppose that $\alpha\leq \beta\leq \gamma$ and $\max (L(\beta,\gamma))<\min (L(\alpha,\beta))$,
then $L(\alpha, \gamma)=L(\alpha,\beta)\cup L(\beta,\gamma).$
\end{lemma}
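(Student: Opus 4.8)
The plan is to analyze the walk from $\gamma$ to $\alpha$ by first walking from $\gamma$ down to $\beta$, and then continuing from $\beta$ down to $\alpha$. The key structural observation is that the walk from $\gamma$ to $\alpha$, when it reaches an ordinal $\leq\beta$, must in fact pass through $\beta$ exactly, so that the upper trace splits as $\mathrm{Tr}(\alpha,\gamma)=\mathrm{Tr}(\beta,\gamma)\cup\mathrm{Tr}(\alpha,\beta)$ (the two pieces meeting at $\beta$). First I would make this precise: let $\gamma=\gamma_0>\gamma_1>\cdots>\gamma_m$ be the step ordinals of the walk from $\gamma$ to $\alpha$, and let $\gamma_j$ be the first one with $\gamma_j\leq\beta$. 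I claim $\gamma_j=\beta$. Indeed, the initial segment $\gamma_0>\cdots>\gamma_{j-1}$ all lie above $\beta$, and since $\beta_{i+1}=\min(C_{\beta_i}\setminus\alpha)$ depends on $\alpha$ only through which elements of $C_{\beta_i}$ lie at or above $\alpha$, and $\gamma_0,\dots,\gamma_{j-1}$ are all $>\beta\geq\alpha$, this initial segment coincides with the walk from $\gamma$ to $\beta$; hence $\gamma_{j-1}$ is the last ordinal of $\mathrm{Tr}(\beta,\gamma)$ before $\beta$, and the next step $\min(C_{\gamma_{j-1}}\setminus\beta)$ is $\beta$ itself when we walk to $\beta$. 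When walking to $\alpha$ instead, the next step is $\min(C_{\gamma_{j-1}}\setminus\alpha)$, which is $\leq\beta$; but the hypothesis $\max(L(\beta,\gamma))<\min(L(\alpha,\beta))$ — in particular that $L(\beta,\gamma)$ and $L(\alpha,\beta)$ are disjoint, the former lying below the latter — forces this minimum to equal $\beta$ (any element of $C_{\gamma_{j-1}}$ strictly between $\alpha$ and $\beta$ would be $\max(C_{\gamma_{j-1}}\cap\beta)$, an element of $L(\beta,\gamma)$; one needs that it cannot simultaneously be consistent with the walk to $\alpha$ "jumping over" $\beta$). So I would argue that under the hypothesis the walk from $\gamma$ to $\alpha$ hits $\beta$, and therefore $\mathrm{Tr}(\alpha,\gamma)$ is the concatenation of $\mathrm{Tr}(\beta,\gamma)$ and $\mathrm{Tr}(\alpha,\beta)$.

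Once the trace decomposition is established, I would compute the lower trace. Recall $L(\alpha,\gamma)$ is obtained from the step ordinals $\gamma_0,\dots,\gamma_{m-1}$ via $\xi_k=\max\bigcup_{j\leq k}(C_{\gamma_j}\cap\alpha)$. Splitting at the index where $\gamma_j=\beta$: for the early indices (the walk from $\gamma$ to $\beta$) the relevant sets are $C_{\gamma_j}\cap\alpha\subseteq C_{\gamma_j}\cap\beta$, whose running maxima produce exactly $L(\beta,\gamma)$ — here one uses $\alpha\leq\beta$ together with the fact that all of $L(\beta,\gamma)$ lies below $\min L(\alpha,\beta)\leq\alpha$, so that intersecting with $\alpha$ versus with $\beta$ gives the same running maxima for this initial block. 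For the later indices (the walk from $\beta$ to $\alpha$) one gets the running maxima of $C_{\beta_0'}\cap\alpha,\dots$ shifted by the maximum accumulated so far, namely by $\max L(\beta,\gamma)$; but since $\max L(\beta,\gamma)<\min L(\alpha,\beta)$, carrying that earlier maximum forward never changes the new maxima, and this block contributes exactly $L(\alpha,\beta)$. Combining, $L(\alpha,\gamma)=L(\beta,\gamma)\cup L(\alpha,\beta)$.

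The main obstacle I expect is the first part: cleanly justifying that the walk from $\gamma$ to $\alpha$ genuinely passes through $\beta$ (and does not, at some step above $\beta$, jump past $\beta$ directly down to some ordinal in $[\alpha,\beta)$). This is exactly where the hypothesis $\max(L(\beta,\gamma))<\min(L(\alpha,\beta))$ is used — it must rule out the "jump" by noting that such a jump would create a lower-trace element of the $\gamma$-to-$\beta$ walk lying in the interval $(\alpha,\beta)$, hence at or above $\min L(\alpha,\beta)$, contradicting the separation hypothesis. I would spell this out carefully by induction on the walk, at each step comparing $\min(C_{\gamma_i}\setminus\alpha)$ with $\min(C_{\gamma_i}\setminus\beta)$ and $\max(C_{\gamma_i}\cap\alpha)$ with $\max(C_{\gamma_i}\cap\beta)$. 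The remaining bookkeeping with running maxima is routine once the trace decomposition is in hand.
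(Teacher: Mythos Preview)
Your approach is essentially the same as the paper's: establish first that $\mathrm{Tr}(\alpha,\gamma)=\mathrm{Tr}(\beta,\gamma)\cup\mathrm{Tr}(\alpha,\beta)$, then compute the lower trace by splitting at the index where the walk hits $\beta$ and using the hypothesis to see that the running maxima of the first block are unaffected when intersecting with $\alpha$ rather than $\beta$, and that those of the second block are unaffected by the accumulated maximum from the first. The paper packages the first step more crisply: from $\max L(\beta,\gamma)<\min L(\alpha,\beta)<\alpha$ it observes directly that $C_\xi\cap\alpha=C_\xi\cap\beta$ for every $\xi\in\mathrm{Tr}(\beta,\gamma)\setminus\{\beta\}$, which immediately gives both $\min(C_\xi\setminus\alpha)=\min(C_\xi\setminus\beta)$ (so the walks coincide down to $\beta$) and the equality of running maxima needed for the lower trace. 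Your inductive plan will arrive at exactly this, but note that your first paragraph asserts ``$\gamma_{j-1}$ is the last ordinal of $\mathrm{Tr}(\beta,\gamma)$ before $\beta$'' before it is justified---the $\beta$-walk could in principle continue past index $j-1$ while the $\alpha$-walk drops below $\beta$; it is the hypothesis (via $C_{\gamma_i}\cap\alpha=C_{\gamma_i}\cap\beta$) that rules this out at every step, not just the last one, so your induction should carry that equality along rather than only invoking it at the terminal step.
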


\begin{proof} Since $\max (L(\beta,\gamma))<\min (L(\alpha,\beta))$, we have
$C_{\xi}\cap \alpha=C_{\xi}\cap \beta$ whenever $\xi$ is in $\mathrm{Tr}(\beta, \gamma)$
and $\xi \neq \beta$.
It follows that $\beta \in \mathrm{Tr}(\alpha,\gamma)$ and
$\mathrm{Tr}(\alpha, \gamma)=\mathrm{Tr}(\alpha,\beta)\cup \mathrm{Tr}(\beta,\gamma)$.
Assume $\mathrm{Tr}(\alpha,\gamma)=\{\gamma_0,...,\gamma_l\}$ and
$L(\alpha,\gamma)=\{\xi_0,...,\xi_{l-1}\},$ there is $l_0\leq l$ such that $\gamma_{l_0}=\beta$.
Therefore, $\{\xi_k\}_{k\leq l_0-1}=L(\beta,\gamma).$ On the other hand
$\max (C_{\gamma_{l_0}}\cap \alpha)>\xi_{l_0-1}$ because $\xi_{l_0-1}\in L(\beta,\gamma)$
and $\max C_{\gamma_{l_0}}\in L(\alpha,\beta)$, hence if $k\geq l_0,$ then
$$
\xi_k=\max \overset{k}{\underset {j=0}{\bigcup}} (C_{\gamma_j}\cap \alpha)=
\max \overset{k}{\underset {j=l_0}{\bigcup}} (C_{\gamma_j}\cap \alpha),
$$
and so $L(\alpha,\beta)=\{\xi_k\}_{k=l_0}^{l-1}$.
\qed
\end{proof}

\begin{lemma}\label{limit} If $\alpha<\beta,$ then $L(\alpha,\beta)$ is a non empty finite set and,
for every limit ordinal $\beta$, $\underset{\alpha\to\beta}{\lim} \min(L(\alpha,\beta))=\beta$.
\end{lemma}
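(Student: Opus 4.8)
The plan is to establish the two assertions of Lemma \ref{limit} separately, both by unwinding the definition of the walk and exploiting the recursive structure of lower traces. For the first assertion, that $L(\alpha,\beta)$ is a nonempty finite set, I would argue by induction on $\beta$. Finiteness is immediate since $\mathrm{Tr}(\alpha,\beta)=\{\beta_0,\dots,\beta_l\}$ is finite (the sequence $\beta_0>\beta_1>\cdots$ strictly decreases, so it terminates) and $L(\alpha,\beta)$ has at most $l$ elements. For nonemptiness, note that $\beta_1=\min(C_{\beta_0}\setminus\alpha)$, and since $C_\beta$ is cofinal in $\beta$ (or $C_\beta=\{\xi\}$ if $\beta=\xi+1$), there is at least one step in the walk; then $\xi_0=\max(C_{\beta_0}\cap\alpha)$ is well-defined provided $C_{\beta_0}\cap\alpha\neq\emptyset$. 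If $\alpha\le\min C_{\beta_0}$ this set could be empty, so one must be slightly careful: I would instead observe that at the \emph{last} step, $\beta_{l-1}>\alpha=\beta_l=\min(C_{\beta_{l-1}}\setminus\alpha)$ forces $\alpha\in C_{\beta_{l-1}}$, and in the successor case $C_{\beta_{l-1}}=\{\alpha\}$ so still $\alpha\in C_{\beta_{l-1}}$; in either case the union $\bigcup_{j=0}^{l-1}(C_{\beta_j}\cap\alpha)$ — wait, this is $C_{\beta_j}\cap\alpha$, strictly below $\alpha$. The cleanest fix is to recall the standard convention (implicit in the walk definition) that $\xi_k$ is taken over the nonempty part, or simply to note $\xi_{l-1}\ge\max(C_{\beta_0}\cap\alpha)$ and that some $C_{\beta_j}\cap\alpha$ is nonempty because one cannot reach $\alpha$ from above in one trivial step unless $\beta=\alpha+1$, a case handled directly.

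For the second assertion, $\lim_{\alpha\to\beta}\min(L(\alpha,\beta))=\beta$ for limit $\beta$, I would fix a threshold $\gamma<\beta$ and show that for all sufficiently large $\alpha<\beta$ we have $\min(L(\alpha,\beta))>\gamma$. The key point: for $\alpha$ large, the first step $\beta_1=\min(C_\beta\setminus\alpha)$ is an element of $C_\beta$ just above $\alpha$, and $\xi_0=\max(C_\beta\cap\alpha)$ is the element of $C_\beta$ just \emph{below} $\alpha$, hence $\xi_0\ge$ that element, which tends to $\beta$ as $\alpha\to\beta$ since $C_\beta$ is cofinal in $\beta$. So by choosing $\alpha$ above the $\gamma$-th-and-later elements of $C_\beta$, we force $\xi_0>\gamma$. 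The remaining entries $\xi_1,\dots,\xi_{l-1}$ are maxima of unions \emph{containing} $C_{\beta_0}\cap\alpha$, hence are $\ge\xi_0>\gamma$ as well, because the $\xi_k$ are nondecreasing by construction ($\xi_k=\max\bigcup_{j=0}^k(C_{\beta_j}\cap\alpha)\ge\xi_{k-1}$). Therefore $\min(L(\alpha,\beta))=\xi_0>\gamma$, which is exactly what we need.

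\textbf{The main obstacle} I anticipate is handling the boundary behavior of $\xi_0$ cleanly — specifically ensuring $C_\beta\cap\alpha$ is nonempty and that "the element of $C_\beta$ just below $\alpha$" genuinely goes to $\beta$ — together with checking that no \emph{later} $\beta_j$ in the walk can contribute a small lower-trace value. The latter is controlled by the monotonicity $\xi_0\le\xi_1\le\cdots\le\xi_{l-1}$ already built into the definition in the excerpt, so once $\xi_0>\gamma$ is secured, every $\xi_k>\gamma$ for free, and $\min(L(\alpha,\beta))=\xi_0$. I would present the nonemptiness of $L(\alpha,\beta)$ first (a one-line induction or direct unwinding of the last step of the walk), then the cofinality computation for $\xi_0$, and conclude. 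I expect roughly half a page.
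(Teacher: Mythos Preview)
Your approach is correct and coincides with the paper's. The paper dismisses the first assertion as trivial and, for the limit statement, exhibits for each $\alpha<\beta$ an element $\alpha'\in C_\beta\setminus(\alpha+1)$ and observes that $\min L(\alpha'+1,\beta)=\max(C_\beta\cap(\alpha'+1))=\alpha'>\alpha$; this is exactly your computation of $\xi_0$ via the cofinality of $C_\beta$, combined (implicitly) with your observation that $\xi_0\le\xi_1\le\cdots$ forces $\min L(\alpha,\beta)=\xi_0$. Your version is more explicit about the monotonicity of the $\xi_k$ and about the edge case for nonemptiness, but the argument is the same.
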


\begin{proof} The first statement is trivial, let us prove that
$\underset{\alpha\to\beta}{\lim} \min(L(\alpha,\beta))=\beta$, for every limit ordinal $\beta$.
Given $\alpha<\beta$, one can take $\alpha'\in C_{\beta}\setminus (\alpha+1)$.
Then $\alpha<\alpha'=\max (C_{\beta}\cap (\alpha'+1))=
\min L(\alpha'+1,\beta)\leq \underset{\alpha\to\beta}{\lim} \min(L(\alpha,\beta))$.
It follows that $\beta\leq \underset{\alpha\to\beta}{\lim} \min(L(\alpha,\beta))\leq \beta$,
 and this completes the proof.
\qed
\end{proof}

Fix a sequence $\langle e_{\alpha}: \alpha<\omega_1\rangle$ satisfying the following conditions:

\begin{enumerate}
\item $e_{\alpha}: \alpha\to\omega$ is finite-to-one;\\
\item $\alpha<\beta$ implies $e_{\beta}\restr \alpha=_*e_{\alpha},$ i.e.
$\{\xi<\alpha:  e_{\beta}(\xi)\neq e_{\alpha}(\xi)\}$ is finite.
\end{enumerate}

Given $\alpha <\beta <\omega_1$ let $\sdiff(\alpha,\beta)$ be the least $\xi <\alpha$ such that 
$e_{\alpha}(\xi)\neq e_{\beta}(\xi)$, if it exists, and $\alpha$ otherwise. 
We define $\osc(\alpha,\beta)$ as follows
$$
\osc(\alpha,\beta)=\vert \{ i\leq l-1:
e_{\alpha}(\xi_i)\leq e_{\beta}(\xi_i) \land e_{\alpha}(\xi_{i+1})>e_{\beta}(\xi_{i+1})\}\vert
$$
where $L(\alpha,\beta)=\{\xi_0<\cdots<\xi_{l-1}\}$.

It will be convenient also to use the notation
$\mathrm{Osc}(e_{\alpha},e_{\beta}, L(\alpha,\beta))$ for the set
$\{\xi_i\in L(\alpha,\beta):  e_{\alpha}(\xi_i)\leq e_{\beta}(\xi_i) \land e_{\alpha}(\xi_{i+1})>e_{\beta}(\xi_{i+1}) \}$.

Our aim is to prove the following theorem due to J. Moore (see \cite{Moore}).

\begin{theorem}[\cite{Moore}]\label{moore} Let $A,B\subseteq \omega_1$ be uncountable,
then for all $n<\omega$ there exist $\alpha \in A$, $\beta_0,\beta_1,...,\beta_{n-1}\in B$ and $k_0$
such that $\alpha<\beta_0,...,\beta_{n-1}$ and $\osc(\alpha,\beta_m)=k_0+m,$ for all $m<n$.
\end{theorem}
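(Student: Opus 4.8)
The plan is to read off the value $\osc(\alpha,\beta)$ from the minimal walk from $\beta$ down to $\alpha$ together with the coherence of $\langle e_\alpha:\alpha<\omega_1\rangle$, and then to build the $n$ witnesses from the top down, lengthening the walk one segment at a time and controlling, via the walk--decomposition Lemmas~\ref{fact1} and~\ref{limit}, how the oscillation count grows. As a preliminary I fix a large regular $\theta$ and a continuous $\in$-increasing chain $\langle M_\xi:\xi<\omega_1\rangle$ of countable elementary submodels of $H_\theta$, each containing $A$, $B$, $\langle C_\alpha\rangle_\alpha$, $\langle e_\alpha\rangle_\alpha$ and $n$, and put $\delta_\xi=M_\xi\cap\omega_1$; the set $\Delta=\{\delta_\xi:\xi<\omega_1\}$ is a club. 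Passing to uncountable subsets of $A$ and $B$ and pigeonholing on finite data, I may assume a uniformity: there is a fixed $m^*<\omega$ so that the restrictions $e_\alpha\restriction\delta_{m^*}$ agree up to a fixed finite error across $A$, and likewise across $B$, and every element of $A$ (resp.\ $B$) is a limit point of $A$ (resp.\ $B$). The point of this reduction is that the finitely many coordinates at which the relevant $e$'s will fail to cohere are pinned down in advance.

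Next I reformulate the oscillation. For $\alpha<\beta$ with $L(\alpha,\beta)=\{\xi_0<\cdots<\xi_{l-1}\}$, attach the $0/1$-string given by $\varepsilon_i=1$ iff $e_\alpha(\xi_i)>e_\beta(\xi_i)$; then $\osc(\alpha,\beta)$ is exactly the number of rising edges, i.e.\ of indices $i$ with $\varepsilon_i=0$ and $\varepsilon_{i+1}=1$. By coherence $e_\alpha(\xi)=e_\beta(\xi)$ off a finite subset of $\alpha$, so $\varepsilon$ is $0$ off a finite set and this count is finite and governed only by the finite disagreement set of $e_\alpha$ and $e_\beta$ and by how it sits among the landmarks $\xi_i$. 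The merit of routing through the lower trace is that it converts the single comparison of $e_\alpha$ and $e_\beta$ into a string built from local comparisons at the trace points, and that this string is additive along the walk in the following sense: by Lemma~\ref{fact1}, whenever $\mu$ is a node of the walk from $\beta$ to $\alpha$ with $\max L(\mu,\beta)<\min L(\alpha,\mu)$ one has $L(\alpha,\beta)=L(\alpha,\mu)\cup L(\mu,\beta)$ with $L(\mu,\beta)$ occupying the initial block in increasing order, so that the rising-edge count over $L(\alpha,\beta)$ is the one over the block $L(\mu,\beta)$ plus the one over the block $L(\alpha,\mu)$ plus at most one extra edge at the seam. The hypothesis $\max L(\mu,\beta)<\min L(\alpha,\mu)$ is precisely what Lemma~\ref{limit} lets me force, by choosing $\alpha$ low enough below $\mu$.

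With these tools I would build the witnesses recursively from the top. Choose $\beta_{n-1}\in B$, then intermediate nodes $\mu_{n-2}>\mu_{n-3}>\cdots$ and members $\beta_{n-2},\dots,\beta_0\in B$ lying on the successive walks, and finally $\alpha\in A$, so that: each $\beta_m$ is a node of the walk from $\beta_{m+1}$ to $\alpha$; the segment of walk from $\beta_{m+1}$ to $\beta_m$ contributes, after the splitting of the previous paragraph, exactly one more rising edge to the string of $L(\alpha,\beta_{m+1})$ than to that of $L(\alpha,\beta_m)$ --- this is the ``$\osc=\osc+1$'' step, carried out exactly as in Lemma~\ref{estendereinodi} but now inside the tree of initial segments of walks emanating from $B$, whose superperfectness is obtained by a pressing-down argument as in Lemma~\ref{pdl}; and all choices of $\beta_m$ and $\mu_m$ are made below successive levels of the chain, so that at the last step the assertion ``there is $\alpha$ in $A$, far enough below $\beta_0$, whose walk to $\beta_0$ realises the prescribed string on $L(\alpha,\beta_0)$'' is a statement with parameters in some $M_\xi$ and hence reflects, producing $\alpha\in A$ rather than merely $\alpha<\delta_\xi$. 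Letting $k_0$ be the rising-edge count of the string of $L(\alpha,\beta_0)$, this yields $\osc(\alpha,\beta_m)=k_0+m$ for all $m<n$.

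The main obstacle I anticipate is the bookkeeping at the seams of the walk decomposition. Since $\langle e_\alpha\rangle$ coheres only modulo finite, splicing two consecutive blocks $L(\mu_m,\beta_{m+1})$ and $L(\alpha,\mu_m)$ can add or delete a bounded number of rising edges exactly at their junction; the $\Delta$-system reduction of the first paragraph is what identifies these exceptional coordinates ahead of time, and the nodes $\mu_m$ and the target $\alpha$ must be chosen so that the seam contributions are swallowed into the fixed constant $k_0$ and do not disturb the ``$+1$ per level'' increments. The other delicate point is reconciling ``take $\alpha$ low enough below $\mu$'', needed to invoke Lemma~\ref{limit}, with ``obtain $\alpha$ in $A$ by reflection'': this is exactly why the construction is threaded through $n$ successive levels of the chain $\langle M_\xi\rangle$ rather than through a single elementary submodel.
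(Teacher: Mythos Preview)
There is a genuine gap. Your central mechanism --- arranging that each $\beta_m$ lies on the walk from $\beta_{m+1}$ down to $\alpha$ --- is not available: the nodes of a walk are completely determined by the $C$-sequence, so there is no reason an arbitrary uncountable $B$ should meet $\mathrm{Tr}(\alpha,\beta_{m+1})$ at all, let alone in a way you can iterate. The paper does use the walk decomposition of Lemma~\ref{fact1}, but the intermediate node is always a single fixed $\delta=M\cap\omega_1$ (one elementary submodel, not a chain), and the $\beta_m$'s are all chosen \emph{above} $\delta$, unrelated to one another's walks.

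What actually drives the argument is a dedicated technical step (Lemma~\ref{Moore}) which your outline has no analogue of: for $\delta$ in a suitable club and any $\alpha\in A\setminus\delta$, $\beta\in B\setminus\delta$, $R\in\{=,>\}$, one can find $\alpha'\in A\setminus\delta$, $\beta'\in B\setminus\delta$ with $L(\delta,\beta)\sqsubset L(\delta,\beta')$ and $e_{\alpha'}(\xi)\,R\,e_{\beta'}(\xi)$ for every $\xi$ in the new block $L(\delta,\beta')\setminus L(\delta,\beta)$. Applying it twice (once with $R$ equal to $=$, once with $>$) manufactures $\alpha_{m+1},\beta_{m+1}$ from $\alpha_m,\beta_m$ with exactly one more point in $\mathrm{Osc}(e_{\alpha_{m+1}},e_{\beta_{m+1}},L(\delta,\beta_{m+1}))$, giving a nested sequence $L(\delta,\beta_0)\sqsubset L(\delta,\beta_1)\sqsubset\cdots$. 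Only at the end does one reflect inside $M$ to find $\alpha\in A\cap\delta$, and then $L(\alpha,\beta_m)=L(\alpha,\delta)\cup L(\delta,\beta_m)$ with the common block $L(\alpha,\delta)$ contributing the same constant to every $\osc(\alpha,\beta_m)$. There is only one seam, at $\delta$, so the bookkeeping you worry about does not arise. Your appeal to Lemma~\ref{pdl} is also off target: that lemma concerns initial segments of the $c_\alpha$-sequences themselves, whereas the present oscillation is computed on lower traces $L(\alpha,\beta)$ against the coherent $e$-sequence, a different object requiring the separate argument of Lemma~\ref{Moore}.
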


This means that we can get arbitrary long intervals of oscillations with a fixed lower
point $\alpha\in A$. We can generalize this to get even more:

\begin{theorem}[\cite{Moore}]\label{justin} Given $\mathfrak{A}\subseteq [\omega_1]^k$ and
$\mathfrak{B}\subseteq [\omega_1]^l$ uncountable and parwise disjoint, and given $n<\omega$,
we can find $A\in \mathfrak{A}$ and $B_0,...,B_{n-1}\in \mathfrak{B}$ such that
$\max A<\min B_i$, for all $i<n$, and $\osc(A(i), B_m(j))=\osc(A(i), B_0(j))+m$ for all $i<k$,
 $j<l$ and  $m<n$.
\end{theorem}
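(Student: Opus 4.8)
The plan is to prove Theorem~\ref{justin} by a simultaneous-walks refinement of the argument for Theorem~\ref{moore}, pushing the oscillation tree machinery from single ordinals to $k$- and $l$-tuples. First I would set up, exactly as in the proof of Lemma~\ref{pdl}, the relevant superperfect trees. Since $\mathfrak{A}$ and $\mathfrak{B}$ are uncountable, by a pressing-down argument one may thin them so that for every $A\in\mathfrak{A}$ the walks $\mathrm{Tr}(\cdot,A(i))$ and the traces $L(\cdot,A(i))$ stabilize appropriately below some fixed countable ordinal, and likewise for $\mathfrak{B}$; in particular, using Lemma~\ref{limit}, I can arrange that $\min L(\xi,A(i))$ and $\min L(\xi,B_m(j))$ tend to $\omega_1$ as the tuples grow, so that the ``$\max L(\beta,\gamma)<\min L(\alpha,\beta)$'' hypothesis of Lemma~\ref{fact1} holds between the relevant pieces. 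This makes the lower trace $L(\alpha,B_m(j))$ split as a fixed ``tail'' part depending only on $\alpha$ concatenated with a ``head'' part $L(\alpha',B_m(j))$ where $\alpha'$ is the first point of the walk from $B_m(j)$ below $\min L(\alpha,B_m(j))$, and symmetrically on the $e$-side using the sequence $\langle e_\alpha\rangle$ and its coherence.

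Next I would build the analogue of the tree $T_{\mathfrak{A}_{m_0}}$ of Theorem~\ref{h}, but indexed by tuples: for a thinned everywhere-unbounded family $\mathfrak{B}^*\subseteq\mathfrak{B}$ and a fixed ``base'' element $A\in\mathfrak{A}^*$, consider the tree of finite approximations $\langle (e_{B(j)}\restr L(\alpha_0,B(j)))\rangle_{j<l}$ as $B$ ranges over $\mathfrak{B}^*$, together with the stabilized codes of $A$. By the pressing-down lemma (as in Lemma~\ref{pdl}) this tree is $\omega_1$-superperfect in each coordinate, and one shows it is superperfect as a tree of $l$-tuples by the same coordinatewise construction used in Claim~\ref{mu}. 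Let $\vec r$ be its least $\infty$-splitting node and set $h_0(i,j)=\osc(A(i),B_0(j))$ for the witness $B_0$ extending $\vec r$; this plays the role of the function $h$ in Theorem~\ref{h}, but here I only need the \emph{differences} $\osc(A(i),B_m(j))-\osc(A(i),B_0(j))$ to be controllable, which is exactly what the conclusion asks for.

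Then, by an induction on $m$ mimicking Claim~\ref{nu} and Lemma~\ref{estendereinodi}, I would successively extend the $\infty$-splitting node: at stage $m$, given $\infty$-splitting extensions realizing $\osc(A(i),B_{m-1}(j))=h_0(i,j)+(m-1)$ for all $i,j$, pick the next extension high enough past the previous values so that exactly one new oscillation is created in each pair $(i,j)$ simultaneously — this is where the ``$s_i(|s_i|-1)\le t_j(|s_i|-1)$ for all $i,j$'' bookkeeping from Claim~\ref{nu} is essential, and it is the main obstacle: I must verify that a single choice of splitting level raises $\osc(A(i),B_m(j))$ by exactly one \emph{uniformly} over all $i<k$ and $j<l$, which requires the stabilization of $A$'s codes and the everywhere-unboundedness of $\mathfrak{B}^*$ to cooperate. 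Having produced, for each $m<n$, an $\infty$-splitting node of the tuple-tree at oscillation offset $m$, I realize all of them at once: pick $B_0,\dots,B_{n-1}\in\mathfrak{B}$ extending these $n$ nodes (possible since each node lies on stationarily/uncountably many members of the thinned family) and pick $A\in\mathfrak{A}$ extending the base node and with all coordinates above $\max\{\min B_m(j)\}$ reversed appropriately, using $\infty$-splitting of the $A$-side tree to get $\max A<\min B_m(j)$ for all $m<n$. Then Lemma~\ref{fact1} assembles the full lower traces and gives $\osc(A(i),B_m(j))=\osc(A(i),B_0(j))+m$ for all $i<k$, $j<l$, $m<n$, as desired.
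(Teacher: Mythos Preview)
The paper does not actually give a proof of Theorem~\ref{justin}; it is stated as a strengthening of Theorem~\ref{moore} with a reference to \cite{Moore}, and only Theorem~\ref{moore} is proved in the text (via Lemma~\ref{Moore} and the elementary-submodel iteration that follows). So the comparison has to be with that argument, whose natural generalisation to tuples is what one expects for Theorem~\ref{justin}.

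Your proposal does not follow that route; it tries to transplant the superperfect-tree machinery of \S4 (Theorem~\ref{h}, Claims~\ref{mu} and~\ref{nu}) into the walk setting. There is a genuine gap. First, the tree you describe, built from the finite approximations $\langle e_{B(j)}\restr L(\alpha_0,B(j))\rangle_{j<l}$, lives in $(\omega^{<\omega})^l$, not in $(\omega_1^{<\omega})^l$, because $e_\beta$ takes values in $\omega$. Invoking the pressing-down lemma ``as in Lemma~\ref{pdl}'' to obtain $\omega_1$-superperfectness is therefore a category error; pressing down is not the mechanism that would make such a tree superperfect in any sense. Second, and more fundamentally, the oscillation of \S5 is $\osc(\alpha,\beta)=|\mathrm{Osc}(e_\alpha,e_\beta,L(\alpha,\beta))|$, where the index set $L(\alpha,\beta)$ depends on \emph{both} arguments. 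In \S4 one compares $f$ and $g$ pointwise on all of $\omega$, so extending one side independently of the other is meaningful; here it is not. Fixing a ``base'' $A$ and growing a tree of $B$'s does not give a stable object for induction, since changing $B$ changes the trace, and the final $A$ you select (with $\max A<\min B_m$) will sit on yet another trace. Your step ``a single choice of splitting level raises $\osc(A(i),B_m(j))$ by exactly one uniformly'' has no content until the traces are aligned, and nothing in the proposal aligns them.

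This is precisely why the paper's proof of Theorem~\ref{moore} passes through the intermediate point $\delta=M\cap\omega_1$: Lemma~\ref{fact1} splits $L(\alpha,\beta_m)=L(\alpha,\delta)\cup L(\delta,\beta_m)$, the piece $L(\alpha,\delta)$ is made uniform over $m$ by choosing $\alpha$ inside $M$ late enough, and Lemma~\ref{Moore} is the engine that extends $L(\delta,\beta_m)$ one block at a time with the $e$-values on $L^+$ prescribed (either $=$ or $>$). The route to Theorem~\ref{justin} is a tuple version of Lemma~\ref{Moore}: given $A\in\mathfrak{A}\setminus\delta$, $B\in\mathfrak{B}\setminus\delta$ and $R\in\{=,>\}$, find $A'\in\mathfrak{A}$ and $B'\in\mathfrak{B}$ with $L(\delta,B(j))\sqsubseteq L(\delta,B'(j))$ and $e_{A'(i)}(\xi)\,R\,e_{B'(j)}(\xi)$ for all $i<k$, $j<l$ and all $\xi$ in the new parts, together with the coherence bounds on $\sdiff$. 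One then iterates exactly as in the proof of Theorem~\ref{moore}. Your outline does not contain this step, and the superperfect-tree substitute you propose does not survive the dependence of $L(\alpha,\beta)$ on both coordinates.
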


In order to prove Theorem \ref{moore} we demonstrate the following lemma.


\begin{lemma}\label{Moore} Let $A,B\subseteq \omega_1$ be uncountable.
There exists a club $C\subseteq \omega_1$ such that if $\delta\in C$,
$\alpha\in A\setminus \delta$, $\beta\in B\setminus \delta,$ and $R\in \{=,>\}$,
then there are $\alpha'\in A\setminus \delta$ and $\beta'\in B\setminus \delta$
satisfying the following properties:
\begin{enumerate}
\item $\max L(\alpha,\beta)<\sdiff(\alpha,\alpha'), \sdiff(\beta,\beta')$;
\item $L(\delta,\beta)\sqsubseteq L(\delta,\beta')$;
\item for all $\xi\in L^+=L(\delta,\beta')\setminus L(\delta,\beta)$,
we have $e_{\alpha'}(\xi)\spazio R\spazio e_{\beta'}(\xi).$
\end{enumerate}
\end{lemma}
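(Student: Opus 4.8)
The plan is to exploit the elementary-submodel method together with the almost-coherence of the sequence $\langle e_\alpha : \alpha<\omega_1\rangle$. First I would let $C$ be the set of $\delta<\omega_1$ such that $\delta$ is closed under the functions arising from a fixed countable elementary submodel construction; concretely, fix a large enough $\theta$ and a well-ordering of $H_\theta$, and let $C$ be the club of $\delta$ which are of the form $M_\delta\cap\omega_1$ for a continuous increasing chain $\langle M_\delta\rangle$ of countable elementary submodels of $(H_\theta,\in,<)$ containing $A$, $B$ and the sequences $\langle C_\alpha\rangle$, $\langle e_\alpha\rangle$. The point of this choice is that for $\delta\in C$ every parameter named below that lies in $M_\delta$ is ``available'' from $\delta$, and $A\cap\delta$, $B\cap\delta$ are unbounded in $\delta$ while $A\setminus\delta$, $B\setminus\delta$ are still uncountable.

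Next, fix $\delta\in C$, $\alpha\in A\setminus\delta$, $\beta\in B\setminus\delta$, and $R\in\{=,>\}$. The goal is to produce $\alpha',\beta'$ above $\delta$ realizing (1)--(3). I would handle $\beta'$ first: since $\delta$ is a limit ordinal and $\delta<\beta$, by Lemma \ref{limit} we have $\lim_{\eta\to\delta}\min L(\eta,\delta)=\delta$, which lets us find $\eta<\delta$ large enough that $\min L(\eta,\delta)$ exceeds $\max L(\alpha,\beta)$ and also exceeds the (finite) set of disagreements between $e_\delta\restriction\delta$ and $e_\beta\restriction\delta$. Using Lemma \ref{fact1} with $\eta\leq\delta\leq\beta'$ (for a suitable $\beta'$ to be chosen) will then force $L(\delta,\beta')$ to begin with $L(\delta,\beta)$, giving (2), because the walk from $\beta'$ down to $\delta$ and then to $\eta$ decomposes; the key is that the walk from $\delta$ to $\beta'$ can be made to start exactly like the walk from $\delta$ to $\beta$ by choosing $\beta'\in B\setminus\delta$ inside an appropriate neighbourhood of $\delta$ in the walk-topology — here I use that $B\cap\delta$ is unbounded in $\delta$ together with elementarity of $M_\delta$ to reflect the statement ``there is $\beta'\in B$ above $\delta$ with $L(\delta,\beta)\sqsubseteq L(\delta,\beta')$'' down, then absoluteness to pull a genuine witness back up past $\delta$. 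Simultaneously I choose $\beta'$ with $\sdiff(\beta,\beta')>\max L(\alpha,\beta)$, which is possible since $e_\beta$ is finite-to-one so the agreement-initial-segments of the $e$'s get arbitrarily long; this is the first clause of (1).

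For $\alpha'$, the new part is clause (3): we must arrange $e_{\alpha'}(\xi)\,R\,e_{\beta'}(\xi)$ for every $\xi$ in the finite set $L^+=L(\delta,\beta')\setminus L(\delta,\beta)$, which lies in the interval $(\delta,\beta')$, hence above $\delta$ and above $\alpha$ only on a finite part. Since $\alpha'$ will be chosen above $\delta$ and we want $\sdiff(\alpha,\alpha')>\max L(\alpha,\beta)$, the values $e_{\alpha'}\restriction(\max L(\alpha,\beta)+1)$ are pinned down to equal $e_\alpha$ there, but the values of $e_{\alpha'}$ on $L^+$ (which sits to the right of $\max L(\alpha,\beta)$) are still free to be dictated: I would use that for each $\gamma<\omega_1$ and each finite set $F\subseteq(\gamma,\omega_1)$ and each prescribed pattern, uncountably many $\alpha'\in A$ satisfy $e_{\alpha'}\restriction F$ realizing that pattern relative to $e_{\beta'}\restriction F$ — this is where the finite-to-one-ness and the coherence $e_{\alpha'}\restriction\alpha=_* e_{\alpha''}$ are combined with a counting/pigeonhole argument inside $M_\delta$, followed by reflection and absoluteness to extract $\alpha'\in A\setminus\delta$. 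The main obstacle I anticipate is this last step: making the three demands on $\alpha'$ compatible — fixing $e_{\alpha'}$ below $\max L(\alpha,\beta)$ to match $e_\alpha$ (clause (1)), while freely choosing the comparison pattern on $L^+$ (clause (3)), while keeping $\alpha'\in A$ — so the heart of the argument is to verify that the relevant ``pattern-realizing'' subsets of $A$ remain uncountable, which should follow by a Fodor/elementarity argument exactly as in the superperfect-tree lemmas (\ref{pdl}, \ref{aaaaa}) earlier in the section. Everything else is bookkeeping with Lemmas \ref{fact1} and \ref{limit}.
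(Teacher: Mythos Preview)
Your proposal contains a critical misreading of the lower trace: you assert that $L^+ = L(\delta,\beta') \setminus L(\delta,\beta)$ lies in the interval $(\delta,\beta')$, but by definition $L(\delta,\beta') \subseteq \delta$ (each $\xi_k$ in the lower trace is a maximum of sets of the form $C_{\beta_j}\cap\delta$). So $L^+$ sits \emph{below} $\delta$, not above it. This changes the whole shape of the argument for clause~(3): since $L^+ \subseteq \delta < \alpha,\alpha'$, the values $e_{\alpha'}(\xi)$ for $\xi \in L^+$ are tied down by the coherence $e_{\alpha'}\restr\delta =_* e_\delta =_* e_\alpha\restr\delta$, not ``free to be dictated'' as you suggest. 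Your intended application of Lemma~\ref{fact1} to $\eta\leq\delta\leq\beta'$ would only yield $L(\eta,\beta')=L(\eta,\delta)\cup L(\delta,\beta')$, which says nothing about $L(\delta,\beta)\sqsubseteq L(\delta,\beta')$.

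Your reflection strategy for clause~(2) has a further gap: $\delta = M \cap \omega_1$ is not an element of $M$, so you cannot directly reflect a statement that has $\delta$ as a parameter. The paper's key manoeuvre is to define, using only parameters that \emph{are} in $M$ (an ordinal $\gamma_0<\delta$, the finite set $L(\delta,\beta) \subseteq \gamma_0$, and the restrictions $e_\alpha\restr\gamma_0$, $e_\beta\restr\gamma_0$), a set $D$ of ordinals $\delta'$ that ``mimic'' $\delta$; since $\delta \in D$ the set $D$ is uncountable, and one then chooses $\delta' > \delta$ in $D$ together with witnesses $\alpha' \in A\setminus\delta'$, $\beta' \in B\setminus\delta'$. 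With this device $L^+$ is exactly $L(\delta,\delta')$, Lemma~\ref{fact1} applied to $\delta \leq \delta' \leq \beta'$ gives $L(\delta,\beta') = L(\delta',\beta') \cup L(\delta,\delta') = L(\delta,\beta) \cup L^+$, and for $R$ being $=$ a coherence condition on $(\gamma_0,\delta')$ handles clause~(3) directly. The case $R$ being $>$ needs a genuinely different ingredient --- an auxiliary unbounded set $E$ of ordinals $\nu$ above which one can always find $\alpha_1\in A$ with arbitrarily large $e$-values on any prescribed finite set above $\nu$ --- which your proposal does not distinguish from the $=$ case and does not anticipate.
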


\begin{proof} Fix a sufficiently large regular cardinal $\theta$. We will
show that if $M \prec H_\theta$ is a countable elementary substructure
containing all the relevant objects, then $\delta =M\cap \omega_1$ satisfies
the conclusion of the lemma. Since the set of such $\delta$ contains
a club in $\omega_1$ this will be sufficient.
Thus, fix $M$ and $\delta$ as above and let $\alpha$ and $\beta$ be as
in the hypothesis of the lemma. We first suppose that $R$ is $=$.
Since $\delta$ is a limit ordinal, we can take $\gamma_0<\delta$ such that:

\begin{enumerate}
\item $\max (L(\delta,\beta))<\gamma_0;$
\item for all $\xi \in (\gamma_0,\delta),$ $e_{\alpha}(\xi)=e_{\beta}(\xi).$
\end{enumerate}

By Lemma \ref{limit} we can fix also $\gamma<\delta $ such that $\gamma_0<\min L(\xi,\delta)$,
for all $\xi\in (\gamma,\delta)$.
Let $D$ be the set of all $\delta'<\omega_1$ such that for some $\alpha'\in A\setminus \delta'$
and $\beta'\in B\setminus \delta'$ the following properties are satisfied:

\begin{itemize}
\item[$(a)$] $e_{\alpha'}\restr \gamma_0=e_{\alpha}\restr \gamma_0,$ $ e_{\beta'}\restr \gamma_0=e_{\beta}\restr \gamma_0;$
\item[$(b)$] $L(\delta', \beta')=L(\delta,\beta);$
\item[$(c)$] for all $\xi\in (\gamma,\delta'),$ $\gamma_0<\min L(\xi,\delta');$
\item[$(d)$] for all $\xi\in (\gamma_0,\delta'),$ $e_{\alpha'}(\xi)=e_{\beta'}(\xi).$
\end{itemize}

Observe that for all $\xi\geq \gamma_0,$ $e_{\xi}\restr \gamma_0$ is in M since, by definition,
$e_{\xi}\restr \gamma_0=_* e_{\gamma_0}.$ This means that $D$ is definable in $M,$ hence $D\in M$.
Moreover $D\not\subseteq M$ (since $\delta\in D$) hence $D$ is uncountable.
Choose $\delta'>\delta$ in $D$ with $\alpha'\in A\setminus \delta'$ and $\beta'\in B\setminus \delta'$
witnessing $\delta'\in D.$ By condition $(a)$ of the definition of $D,$

$$
\gamma_0\leq \sdiff(\alpha,\alpha'),\sdiff(\beta,\beta').
$$

Put $L^+=L(\delta,\delta')$, then $\max L(\delta,\beta)=\max L(\delta',\beta')<\min L^+,$ hence
$$L(\delta,\beta')=L(\delta', \beta')\cup L^+=L(\delta,\beta)\cup L^+.$$ Given $\xi\in L^+$,
by condition $(c)$, we have $\gamma_0<\min L^+\leq \xi.$ It follows that $\xi\in (\gamma_0,\delta')$,
so $(d)$ implies $e_{\alpha'}(\xi)=e_{\beta'}(\xi).$

Now assume that $R$ is $>.$ Let $E$ be the set of all limits
$\nu<\omega_1$ such that for all $\alpha_0\in A\setminus \nu$, $\nu_0<\nu,$ $\varepsilon< \omega_1$,
$n<\omega$ and finite $L^+\subseteq \omega_1\setminus \nu$,
there exists $\alpha_1\in A\setminus \varepsilon$ with $\nu_0\leq \sdiff(\alpha_0,\alpha_1)$ and
$e_{\alpha_1}(\xi)>n,$ for all $\xi \in L^+$. Since $E$ is definable from parameters in $M$
it follows $E\in M$, as well.

\begin{cl} The ordinal $\delta$ is in $E.$ In particular $E$ is uncountable.
\end{cl}

\begin{proof}
Let $\alpha_0,\nu_0,\varepsilon,n,L^+$ be given as in the definition of $E$ for $\nu =\delta$.
Since $e_{\alpha_0}$ is finite-to-one, we can assume w.l.o.g. that
$\nu_0>\sup \{\xi<\delta:  e_{\alpha_0}(\xi)\leq n\}.$ By the elementarity of $M$,
there exists $\delta'>\varepsilon,$ $\delta,$ $\max L^+$ and $\alpha_1\in A\setminus \delta'$
such that the following conditions hold:

\begin{itemize}
\item $e_{\alpha_0}\restr \nu_0=e_{\alpha_1}\restr \nu_0;$
\item for all $\xi$ in $(\nu_0,\delta'),$ we have $e_{\alpha_1}(\xi)>n.$
\end{itemize}

Since $L^+\subseteq \delta'\setminus \delta,$ this completes the proof of the claim.
\qed
\end{proof}

Now apply the elementarity of $M$ and the fact that $E$ is uncountable to find $\gamma_0\in E$
such that $L(\delta,\beta)<\gamma_0<\delta.$ By Lemma \ref{limit} we can find a $\gamma<\delta$ such that
if $\xi \in (\gamma,\delta),$ then $\gamma_0<L(\xi,\delta)$.
Again by the elementarity of $M$ select limit $\delta'>\delta$ and $\beta'\in B\setminus \delta'$
such that the following conditions hold:
\begin{itemize}
\item $e_{\beta'}\restr \gamma_0=e_{\beta}\restr \gamma_0$;
\item $L(\delta',\beta')=L(\delta,\beta)$;
\item $\gamma < \xi< \delta'$ implies $\gamma_0<L(\xi,\delta')$.
\end{itemize}

Put $L^+=L(\delta,\delta'),$ then $L^+\subseteq \omega_1\setminus \gamma_0.$ Since $\gamma_0\in E$
we can apply the definition of $E$ with $\nu_0=\max L(\delta,\beta)+1,$
$n=\max \{e_{\beta'}(\xi); \xi\in L^+\}$ to find $\alpha'\in A\setminus \delta$
such that for all $\xi\in L^+,$ $\max L(\delta,\beta)<\sdiff(\alpha,\alpha')$ and
$e_{\alpha'}(\xi)>e_{\beta'}(\xi).$ This completes the proof of Lemma \ref{Moore}.
\qed
\end{proof}

We can finally prove Theorem \ref{moore}.


\begin{proof}(of {\bf Theorem \ref{moore}}).
Let $A,B\subseteq \omega_1$ be two uncountable sets and let $M\prec H_{\aleph_2}$ be a
countable substructure containing everything relevant with $\delta=M\cap \omega_1$.
Since $M$ contains $A$ and $B,$ the club $C$ provided by Lemma \ref{Moore} is in $M$.
Use Lemma \ref{Moore} to select $\alpha_0,\alpha_1,\ldots,\alpha_n,\ldots$ in
$A\setminus \delta,$ $\beta_0,\beta_1,\ldots,\beta_n,\ldots$
in $B\setminus \delta$ and $\xi_0,\xi_1,...,\xi_n,...$ in $\delta$
such that for all $n<\omega$ the following conditions are satisfied:

\begin{enumerate}
\item $L(\delta,\beta_n)\sqsubset L(\delta, \beta_{n+1})$;
\item $\xi_n\in L(\delta, \beta_{n+1})\setminus L(\delta,\beta_n)$;
\item $\mathrm{Osc}(e_{\alpha_{n+1}}, e_{\beta_{n+1}}, L(\delta,\beta_{n+1}))=
\mathrm{Osc}(e_{\alpha_{n}}, e_{\beta_{n}}, L(\delta,\beta_{n}))\cup \{\xi_n\}$;
\item if $m>n,$ then $\xi_n<\sdiff(\alpha_m,\alpha_{m+1}),\sdiff(\beta_m,\beta_{m+1})$;
\item $e_{\alpha_n}(\max L(\delta, \beta_n))>e_{\beta_n}(\max L(\delta, \beta_n))$.
\end{enumerate}

Suppose $\alpha_n$ and $\beta_n$ have been defined.
We obtain $\alpha_{n+1}$ and $\beta_{n+1}$ by applying Lemma \ref{Moore} twice:
first with $R$ being $=$, second with $R$ being $>$.
If $\alpha'$ and $\beta'$ are the two ordinals obtained by applying the lemma the first time,
then $\xi_n=\min (L(\delta,\beta_{n+1})\setminus L(\delta,\beta'))$.

Now let $n$ be given, pick $\gamma_0<\delta$ such that

$$
\gamma_0>\max L(\delta,\beta_n),\max \{\xi<\delta:  \exists m,m'\leq n (e_{\beta_m}(\xi)\neq e_{\beta_{m'}}(\xi)) \}.
$$
Using the elementarity of $M$ and Lemma \ref{limit}, select $\alpha\in A\cap \delta$ such that
$$
\max L(\delta, \beta_n)<\sdiff(\alpha,\alpha_n)\textrm{ and }\gamma_0<\min L(\alpha,\delta).
$$
Now let $m<n$ be fixed. It follows from Lemma \ref{fact1} that
$$
L(\alpha,\beta_m)=L(\alpha,\delta)\cup L(\delta,\beta_m).
$$
Finally $e_{\beta_m}\restr L(\alpha,\delta)$ does not depend on $m$ since
$$
\min L(\alpha,\delta)>\gamma_0> \max \{\xi<\delta:  \exists m,m'\leq n (e_{\beta_m}(\xi)\neq e_{\beta_{m'}}(\xi)) \}.
$$
Therefore
$$
\mathrm{Osc}(e_{\alpha},e_{\beta_0}, L(\alpha,\delta))=\mathrm{Osc}(e_{\alpha},e_{\beta_m}, L(\alpha,\delta)).
$$
By $5.$, $\mathrm{Osc}(e_{\alpha},e_{\beta_m},L(\alpha,\beta_m))=
\mathrm{Osc}(e_{\alpha},e_{\beta_m}, L(\alpha,\delta))\cup \mathrm{Osc}(e_{\alpha}, e_{\beta_m}, L(\delta, \beta_m))$
so by  $3.,$ $\mathrm{Osc}(e_{\alpha},e_{\beta_m}, L(\alpha, \beta_m))=
\mathrm{Osc}(e_{\alpha}, e_{\beta_0}, L(\alpha, \beta_0))\cup \{\xi_{m'}; m'<m \}$.
Hence  $\osc(\alpha,\beta_m)=\osc(\alpha,\beta_0)+m$ and this completes the proof.
\qed
\end{proof}



By using the previous results we can, finally, prove the existence of an $L$-space,
that is ,a regular Hausdorff space which is hereditarily Lindel\"of but not hereditarily separable.
We will work in $\mathbb{T}=\{z\in \mathbb{C}:  \vert z\vert=1\}$.
We fix a sequence $\langle z_{\alpha}:  \alpha<\omega_1\rangle$ of rationally independent
elements of $\mathbb{T}$. It is easy to find such a sequence since
given any countable rationally independent subset $I$ of $\mathbb{T}$,
there are only countable many $z$ for which $I\cup \{z\}$ is rationally dependent.
Consider, now, the function defined as follows:

$$
o(\alpha,\beta)=z_{\alpha}^{\osc(\alpha,\beta)+1},
$$
for all $\alpha<\beta<\omega_1$.

We will use the \emph{Kronecker's Theorem} (see \cite{Kronecker} or \cite{Tchebycheff}) which is the following statement:

\begin{theorem} Suppose that $\langle z_i\rangle_{i<k}$ is a sequence of elements of $\mathbb{T}$
which are rationally independent. For every $\epsilon>0$, there is $n_{\epsilon}\in \mathbb{N}$
such that if $u,v\in \mathbb{T}^k,$ there is $m<n_{\epsilon}$ such that
$$
\vert u_i z_i^m-v_i\vert<\epsilon,
$$
for all $i<k$.
\end{theorem}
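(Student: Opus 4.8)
The plan is to reduce the assertion to the density of the orbit $O=\{(z_1^m,\dots,z_k^m):m\in\mathbb{N}\}$ in $\mathbb{T}^k$, and then to extract the uniform bound $n_\epsilon$ from the compactness of $\mathbb{T}^k$.

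First I would note that, since $|u_i|=1$, we have $|u_iz_i^m-v_i|=|z_i^m-w_i|$ where $w_i=v_i\overline{u_i}\in\mathbb{T}$; hence it suffices to find, for each $\epsilon>0$, an $n_\epsilon$ such that for every $w=(w_1,\dots,w_k)\in\mathbb{T}^k$ there is $m<n_\epsilon$ with $|z_i^m-w_i|<\epsilon$ for all $i<k$. Assuming $O$ is dense in $\mathbb{T}^k$, the uniform $n_\epsilon$ comes for free: given $\epsilon$, for each $w\in\mathbb{T}^k$ pick $m_w\in\mathbb{N}$ with $|z_i^{m_w}-w_i|<\epsilon/2$ for all $i$, and use continuity of $x\mapsto x^{m_w}$ to find an open $V_w\ni w$ with $|z_i^{m_w}-w'_i|<\epsilon$ for all $i$ whenever $w'\in V_w$; now cover the compact space $\mathbb{T}^k$ by finitely many $V_{w_1},\dots,V_{w_r}$ and set $n_\epsilon=1+\max_{j\le r}m_{w_j}$.

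It remains to prove that $O$ is dense, and here I would use rational independence in the form: $\prod_{i<k}z_i^{a_i}\neq1$ for every nonzero $a=(a_i)_{i<k}\in\mathbb{Z}^k$. The quickest route is Weyl's equidistribution criterion: the sequence $x_m=(z_1^m,\dots,z_k^m)$ is equidistributed in $\mathbb{T}^k$ — in particular its range is dense — provided $\frac{1}{N}\sum_{m=1}^N\prod_{i<k}z_i^{ma_i}\to0$ as $N\to\infty$ for every nonzero $a\in\mathbb{Z}^k$. But $\prod_{i<k}z_i^{ma_i}=\zeta^m$ with $\zeta=\prod_{i<k}z_i^{a_i}\in\mathbb{T}$ and $\zeta\neq1$ by rational independence, so the geometric partial sums stay bounded and $\bigl|\frac{1}{N}\sum_{m=1}^N\zeta^m\bigr|\le\frac{2}{N\,|1-\zeta|}\to0$. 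One could instead argue without Weyl: the closure $H=\overline{O}$ is a closed subgroup of $\mathbb{T}^k$, and if $H\neq\mathbb{T}^k$ then by the character theory of the compact group $\mathbb{T}^k$ some nontrivial character $x\mapsto\prod_{i<k}x_i^{a_i}$ ($a\neq0$) would vanish on $H$, forcing $\prod_{i<k}z_i^{a_i}=1$, a contradiction.

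The one substantial step is this last one: passing from ``no nontrivial character kills the orbit'' to ``the orbit is dense''. Whichever way one packages it — Weyl's criterion, which rests on Stone--Weierstrass for trigonometric polynomials on $\mathbb{T}^k$, or Pontryagin duality for the compact abelian group $\mathbb{T}^k$ — everything else in the argument is elementary bookkeeping.
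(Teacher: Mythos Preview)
Your proof is correct. Note, however, that the paper does not prove this theorem at all: it is stated as Kronecker's Theorem with references to the literature and then used as a black box in the construction of the $L$-space. So there is no proof in the paper to compare against; you have supplied what the paper omits.

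One cosmetic remark: where you invoke ``continuity of $x\mapsto x^{m_w}$'' to pass from $w$ to a neighborhood $V_w$, no power map is actually needed --- the $z_i^{m_w}$ are fixed numbers, and the set $\{w'\in\mathbb{T}^k:|z_i^{m_w}-w'_i|<\epsilon\text{ for all }i\}$ is manifestly open. This does not affect the argument.
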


We can, now, define the $L$-space. For every $\beta<\omega_1,$ we define a function $w_{\beta}:\omega_1\to \mathbb{T}$ as follows:
$$
w_{\beta}(\xi)=\left\{ \begin{array}{ll}
o(\xi,\beta)&\textrm{ if $\xi<\beta$}\\
1&\textrm{ otherwise.}
\end{array}\right.
$$
Let $\mathscr{L}=\{w_{\beta}:  \beta<\omega_1\}$ viewed as a subspace of $\mathbb{T}^{\omega_1}$.

\begin{remark} $\mathscr{L}$ is not separable.
\end{remark}

For all $X\subseteq \omega_1,$ let $\mathscr{L}_{X}=\{w_{\beta}\restr X; \beta\in X\}$
viewed as a subspace of $\mathbb{T}^X.$ We will simply write $w_{\beta}$
for $w_{\beta}\restr X$ when referring to elements of $\mathscr{L}_{X}$.
Our aim is to prove that $\mathscr{L}_{X}$ is an $L$-space, for all $X$ uncountable.

\begin{lemma}\label{intorni} Let $\mathscr{A}\subseteq [\omega_1]^k$ and $\mathscr{B}\subseteq [\omega_1]^l$
be uncountable families of pairwise disjoint sets. For every sequence $\langle U_i\rangle_{i<k}$ of open neighborhoods
in $\mathbb{T}$ and every $\phi: k\to l$, there are $a\in \mathscr{A}$ and $b\in \mathscr{B}$ such that $\max (a)<\min (b)$ 
and for all $i<k$,
$$
o(a(i), b(\phi(i)))\in U_i.
$$
\end{lemma}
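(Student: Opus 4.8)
The plan is to combine Theorem \ref{justin} with Kronecker's Theorem, the bridge being the identity $o(\alpha,\beta_m)=o(\alpha,\beta_0)\cdot z_\alpha^m$, which holds whenever $\osc(\alpha,\beta_m)=\osc(\alpha,\beta_0)+m$ (since $o(\alpha,\beta)=z_\alpha^{\osc(\alpha,\beta)+1}$). To begin I would fix, for each $i<k$, a point $v_i\in U_i$ and an $\epsilon_i>0$ with $\{w\in\mathbb{T}:|w-v_i|<\epsilon_i\}\subseteq U_i$, and set $\epsilon=\min_{i<k}\epsilon_i$.

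The one delicate point is that the integer produced by Kronecker's Theorem depends on the particular $k$-tuple $\langle z_{a(i)}\rangle_{i<k}$ one feeds it, i.e. on the choice of $a\in\mathscr{A}$, whereas to apply Theorem \ref{justin} one must commit in advance to how many sets $B_0,\ldots,B_{n-1}$ to demand. I would break this circularity by pigeonhole: for $a\in\mathscr{A}$ let $n(a)<\omega$ be the bound Kronecker's Theorem attaches to $\langle z_{a(i)}\rangle_{i<k}$ and $\epsilon$; since $\mathscr{A}$ is uncountable there is an uncountable $\mathscr{A}'\subseteq\mathscr{A}$ on which $a\mapsto n(a)$ is constant, say equal to $N$, and $\mathscr{A}'$ is still pairwise disjoint. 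Now apply Theorem \ref{justin} to $\mathscr{A}'$, $\mathscr{B}$ and $n=N$ to obtain $A\in\mathscr{A}'$ and $B_0,\ldots,B_{N-1}\in\mathscr{B}$ with $\max A<\min B_m$ for all $m<N$ and $\osc(A(i),B_m(j))=\osc(A(i),B_0(j))+m$ for all $i<k$, $j<l$, $m<N$. Setting $u_i=o(A(i),B_0(\phi(i)))\in\mathbb{T}$ and applying Kronecker's Theorem to $\langle z_{A(i)}\rangle_{i<k}$ (rationally independent, with bound $N=n(A)$) with $u=\langle u_i\rangle_{i<k}$ and $v=\langle v_i\rangle_{i<k}$, we get some $m<N$ with $|u_iz_{A(i)}^m-v_i|<\epsilon$ for all $i<k$. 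For that $m$ the bridge identity gives $o(A(i),B_m(\phi(i)))=u_iz_{A(i)}^m\in U_i$, so $a=A$ and $b=B_m$ work, the requirement $\max a<\min b$ (hence $a(i)<b(\phi(i))$, so that $o(a(i),b(\phi(i)))$ is defined) coming from Theorem \ref{justin}.

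The only genuine obstacle is the uniformity issue handled above; after that the argument is bookkeeping. I would take care to verify that Theorem \ref{justin} permits drawing the bottom tuple $A$ from a prescribed uncountable, pairwise disjoint subfamily (its hypothesis asks for nothing more) and that the single integer $m$ extracted from Kronecker's Theorem serves all coordinates $i<k$ at once, which it does because that theorem is stated for $k$-tuples and Theorem \ref{justin} aligns all the oscillations $\osc(A(i),B_m(\phi(i)))$ to the same shift $m$.
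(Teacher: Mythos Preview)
Your proof is correct and follows essentially the same route as the paper's: reduce to $\epsilon$-balls, make the Kronecker bound uniform over $\mathscr{A}$, apply Theorem~\ref{justin} with that bound, and then use Kronecker's Theorem together with the identity $o(\alpha,\beta_m)=o(\alpha,\beta_0)\cdot z_\alpha^m$ to pick the right $m$. The only difference is cosmetic: where the paper simply says ``we can assume also that the integer $n_\epsilon$ \ldots\ is uniform for $a\in\mathscr{A}$'', you spell this out via the pigeonhole argument.
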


\begin{proof} We may assume without loss of generality that every $U_i$ is an $\epsilon$-ball about a point $v_i$,
for some fixed $\epsilon >0$. We can assume also that the integer $n_{\epsilon}$ of the Kronecker's Theorem
for the sequence $\langle z_{a(i)}\rangle_{i<k}$ is uniform for $a\in \mathscr{A}$.
Apply Theorem \ref{justin} to find $a\in \mathscr{A}$ and $\langle b_m\rangle_{m<n_{\epsilon}}$ a sequence of elements
of $\mathscr{B}$ such that
$$
a<b_m
$$
$$\osc(a(i), b_m(j))=\osc(a(i), b_0(j))+m,
$$
for all $i<k,j<l$ and $m<n_{\epsilon}.$
For each $i<k,$ put $u_i=o(a(i), b_0(\phi(i))).$ There is an $m<n_{\epsilon}$, such that
$$
\vert u_i z^m_{a(i)} -v_i\vert<\epsilon,
$$
for all $i<k$ or, equivalently, $o(a(i), b_m(\phi(i)))\in U_i$. This completes the proof.
\qed
\end{proof}

\begin{lemma}\label{injezioni} If $X,Y\subseteq \omega_1$ have countable intersection,
then there is no continuous injection from any uncountable subspace of
$\mathscr{L}_{X}$ into $\mathscr{L}_{Y}.$
\end{lemma}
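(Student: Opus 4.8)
Suppose toward a contradiction that $X,Y\subseteq\omega_1$ have countable intersection, $Z\subseteq X$ is uncountable, and $f:\mathscr{L}_Z\to\mathscr{L}_Y$ is a continuous injection. Writing $Z'=\{\beta\in Z: w_\beta\restr Z\in \operatorname{dom}(f)\}$, we may think of $f$ as assigning to each $\beta\in Z'$ some ordinal $g(\beta)\in Y$ with $f(w_\beta\restr Z)=w_{g(\beta)}\restr Y$; injectivity of $f$ forces $g$ to be (essentially) injective, so by shrinking to an uncountable $Z''\subseteq Z'$ we may assume $g$ is strictly increasing, and by a further shrinking (a $\Delta$-system / pressing-down argument on the countable set $X\cap Y$) we may assume that for $\beta\in Z''$ the pair $\{\beta,g(\beta)\}$ avoids $X\cap Y$ in the relevant coordinates, and that $\beta<g(\beta)$ for all $\beta$ (or a fixed order pattern holds). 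The point of passing to pairs is that continuity of $f$ at a point $w_\beta$ says: for every basic open box around $f(w_\beta)=w_{g(\beta)}$ in $\mathbb{T}^Y$ there is a basic open box around $w_\beta$ in $\mathbb{T}^Z$ whose $f$-image lands inside it.

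\textbf{Main step.} Fix a limit $\delta=M\cap\omega_1$ for a countable $M\prec H_{\aleph_2}$ containing everything relevant. The plan is to derive a contradiction by choosing, inside $\mathscr{A}=\{\{\beta\}:\beta\in Z''\}$ and with the target pair family $\mathscr{B}=\{\{g(\beta)\}:\beta\in Z''\}$ (or suitable finite tuples built from coordinates of $Z''$ and $g[Z'']$), two witnesses supplied by Lemma \ref{intorni}. Continuity of $f$ at a fixed $w_{\beta_0}$ $(\beta_0\in Z'')$ gives a finite $F\subseteq Z$ and $\epsilon>0$ such that if $w_\beta$ agrees with $w_{\beta_0}$ up to $\epsilon$ on the coordinates in $F$, then $w_{g(\beta)}$ agrees with $w_{g(\beta_0)}$ up to $\epsilon$ on a prescribed finite set $F'\subseteq Y$ of coordinates — in particular on a coordinate $\xi^\ast\in Y\setminus X$. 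But Lemma \ref{intorni}, applied with $\mathscr{A},\mathscr{B}$ and the neighborhoods forcing the $F$-coordinates of $o$ to stay near the right values while making the value $o(a,\,b(\phi))$ at coordinate $\xi^\ast$ land in an arbitrary prescribed open set $U$, produces $a\in\mathscr{A}$, $b\in\mathscr{B}$ with $\max a<\min b$ realizing exactly that. Since $\xi^\ast\notin X$, the coordinate $\xi^\ast$ of $w_\beta\restr Z$ is invisible to the constraint on $F\subseteq Z$, so we are free to drive $w_{g(\beta)}(\xi^\ast)=o(\xi^\ast,g(\beta))$ (or the corresponding value for the chosen coordinate of $Y$) away from $w_{g(\beta_0)}(\xi^\ast)$ by more than $\epsilon$, while keeping the $F$-coordinates within $\epsilon$ of those of $w_{\beta_0}$. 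This contradicts continuity.

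\textbf{Where the work is.} The delicate bookkeeping is matching up coordinates correctly: the neighborhoods $U_i$ in Lemma \ref{intorni} control the values $o(a(i),b(\phi(i)))$, i.e.\ values of the form $z_{a(i)}^{\osc+1}$, which are precisely the nontrivial coordinates of the functions $w_b$; one must arrange the tuple $\mathscr{A}$ to contain both the coordinates from $F\cap X$ (whose $o$-values we must pin down, using that $F\cap X$ is finite hence can be absorbed into a finite tuple) and a coordinate mapping under $\phi$ to the free coordinate $\xi^\ast\in Y\setminus X$ we wish to perturb. The hypothesis that $X\cap Y$ is countable is exactly what guarantees such a $\xi^\ast\in Y\setminus X$ exists below any witness and can be chosen outside the finitely many coordinates already constrained; without it the perturbation coordinate could always be forced to agree. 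I expect the main obstacle to be the reduction of an arbitrary continuous injection $f$ to a nicely behaved ordinal map $g$ together with the correct choice of the finite index sets $\mathscr{A},\mathscr{B},\phi$ feeding Lemma \ref{intorni}; once that setup is in place, the contradiction with continuity is immediate from Lemma \ref{intorni}.
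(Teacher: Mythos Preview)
Your high-level strategy --- pass from the continuous injection to an ordinal map $g$, then use Lemma~\ref{intorni} to produce a point that lies in a basic neighborhood on the $X$-side while its image violates a prescribed constraint on a $Y$-coordinate --- is exactly the paper's approach. The gap is in how you set up the input to Lemma~\ref{intorni}.

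You fix a \emph{single} $\beta_0$ and extract its continuity data $F,\epsilon$ and a single target coordinate $\xi^\ast$. But then the coordinate set $F\cup\{\xi^\ast\}$ is one fixed finite set, so there is no uncountable pairwise disjoint family $\mathscr{A}$ to hand to Lemma~\ref{intorni}; that lemma (via Theorem~\ref{justin}) genuinely needs both $\mathscr{A}$ and $\mathscr{B}$ uncountable. Your initial choice $\mathscr{A}=\{\{\beta\}:\beta\in Z''\}$, $\mathscr{B}=\{\{g(\beta)\}:\beta\in Z''\}$ also has the roles reversed: in $o(a(i),b(\phi(i)))$ the $a$-part carries \emph{coordinates} and the $b$-part carries the \emph{points} $\beta,g(\beta)$, not the other way round. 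The countable model $M$ and $\delta=M\cap\omega_1$ play no role here; they belong to the proof of Lemma~\ref{Moore}, not this one.

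The fix, and what the paper does, is to run your continuity step not once but for \emph{uncountably many} $\xi$: for each $\xi$ choose a point $\beta_\xi\in X_0$, a target coordinate $\zeta_\xi\in Y$ below $f(\beta_\xi)$, a forbidden neighborhood $V$ with $w_{f(\beta_\xi)}(\zeta_\xi)\notin\bar V$, and a basic neighborhood $U_\xi$ of $w_{\beta_\xi}$ with support $a_\xi\in[X]^k$ whose image lands in $W_\xi=\{w:w(\zeta_\xi)\notin\bar V\}$. Now apply the $\Delta$-system lemma and second countability of $\mathbb T$ to the supports $a_\xi$ and the open sets, obtaining an uncountable $\Xi'$ on which the $a_\xi$ form a $\Delta$-system with root $a$ and the open sets $U_i$ are uniform. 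Then take $\mathscr{A}=\{(a_\xi\setminus a)\cup\{\zeta_\xi\}:\xi\in\Xi'\}$ and $\mathscr{B}=\{\{\beta_\xi,f(\beta_\xi)\}:\xi\in\Xi'\}$ (both uncountable and pairwise disjoint after a further thinning), with $\phi$ sending the $X$-coordinates to the $\beta$-slot and $\zeta_\xi$ to the $f(\beta)$-slot. Lemma~\ref{intorni} then yields $\xi<\xi'$ with $w_{\beta_{\xi'}}\in U_\xi$ (the root coordinates are automatic, since $\beta_{\xi'}\in\Xi'$) while $o(\zeta_\xi,f(\beta_{\xi'}))\in V$, contradicting $U_\xi\subseteq g^{-1}W_\xi$. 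So the $\Delta$-system step you mention belongs not to cleaning up $g$, but to uniformizing the continuity data so that $\mathscr{A}$ can be made uncountable.
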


\begin{proof} Suppose, by way of contradiction, that such an injection $g$ does exist.
Then there are an uncountable set $X_0\subseteq X$ and an injection
$f: X_0\to Y$ such that $g(w_{\beta})=w_{f(\beta)}$.
We may assume without loss of generality that $X_0$ is disjoint from $Y$.
For each $\xi<\omega_1$, let $\beta_{\xi}\in X_0$ and $\zeta_{\xi}\in Y$ be such that
$f(\beta_{\xi})>\zeta_{\xi}$ and if $\xi<\xi',$ then $\beta_{\xi}<\zeta_{\xi'}$.
Let $\Xi\subseteq \omega_1$ be uncountable such that for every $\xi\in \Xi$
there is an open neighborhood $V$ in $\mathbb{T},$ such that $g(w_{\beta_{\xi}})(\zeta_{\xi})\notin \bar{V}$.
Let $W_{\xi}=\{w\in \mathscr{L}_{Y}:  w(\zeta_{\xi})\notin \bar{V}\},$ for all $\xi<\omega_1$.
Since $g$ is continuous at $w_{\beta_{\xi}}$,  there is a basic open neighborhood $U_{\xi}$
of $w_{\beta_{\xi}}$ such that $U_{\xi}\subseteq g^{-1}W_{\xi}.$ By applying the $\Delta$-system lemma
and the second countability of $\mathbb{T},$ we can find an uncountable $\Xi'\subseteq \Xi$,
a sequence of open neighborhoods $\langle U_i\rangle_{i<k}$ in $\mathbb{T},$ and $a_{\xi}\in [X]^k$
such that for all $\xi\in \Xi',$ the following conditions hold:
\begin{itemize}
\item $\{a_{\xi}\}_{\xi\in \Xi'}$ is a $\Delta$-system with root $a$;
\item $w_{\beta_{\xi}}\in \{w\in \mathscr{L}_{X}:  \forall i<k(w(a_{\xi}(i))\in U_i)\}\subseteq U_{\xi}$;
\item the inequality $\beta_{\xi}<f(\beta_{\xi})$ does not depend on $\xi$;
\item $\vert \zeta_{\xi}\cap a_{\xi}\vert$ does not depend on $\xi$.
\end{itemize}

Let $\mathscr{A}= \{a_{\xi}\cup \{\xi\}\setminus a\}_{\xi\in \Xi'}$ and
$\mathscr{B}=\{\beta_{\xi}, f(\beta_{\xi})\}_{\xi\in \Xi'}$.
By applying Lemma \ref{intorni} we can find $\xi<\xi'$ in $\Xi'$ such that for all $i<k,$
$$a_{\xi}\cup \{\zeta_{\xi}\}<\min (\beta_{\xi'}, f(\beta_{\xi'})),$$
$$w_{\beta_{\xi'}}(a_{\xi}(i))=o(a_{\xi}(i), \beta_{\xi'})\in U_i,$$
$$g(w_{\beta_{\xi'}})=w_{f(\beta_{\xi'})}(\zeta_{\xi})=o(\zeta_{\xi}, f(\beta_{\xi'}))\in V.$$
We have that $w_{\beta_{\xi'}}\in U_{\xi}$ and $g(w_{\beta_{\xi'}})\notin W_{\xi}.$ Contradiction.
\qed
\end{proof}

\begin{theorem}[\cite{Moore}] For every $X,$ $\mathscr{L}_{X}$ is hereditarily Lindel\"of.
\end{theorem}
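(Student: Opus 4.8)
The statement to prove is that $\mathscr{L}_X$ is hereditarily Lindelöf for every uncountable $X\subseteq\omega_1$. Since $\mathscr{L}_X$ is second countable if and only if it is metrizable—which it is not, being a non-separable subspace of a separable product—the usual route is to show directly that $\mathscr{L}_X$ has no uncountable discrete subspace in the closure sense; more precisely, I would prove the contrapositive: if $\mathscr{L}_X$ were not hereditarily Lindelöf, then it contains a \emph{right-separated} subspace of type $\omega_1$, i.e. a sequence $\langle w_{\beta_\xi} : \xi<\omega_1\rangle$ together with open sets $U_\xi$ such that $w_{\beta_\xi}\in U_\xi$ but $w_{\beta_{\xi'}}\notin U_\xi$ for $\xi'>\xi$. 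The plan is to derive a contradiction from the existence of such a configuration using the oscillation machinery, exactly in the spirit of Lemma~\ref{injezioni}.

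First I would extract, from the hypothetical right-separated sequence, a clean combinatorial core. Each $U_\xi$ is a basic open neighborhood of $w_{\beta_\xi}$ in $\mathbb{T}^X$, hence determined by finitely many coordinates $a_\xi\in[X]^{k_\xi}$ and an $\epsilon$-ball condition at each of those coordinates. Passing to an uncountable subset and applying the $\Delta$-system lemma together with the second countability of $\mathbb{T}$, I may assume: the $a_\xi$ form a $\Delta$-system with root $a$; $k_\xi=k$ is constant; the target neighborhoods $\langle U_i\rangle_{i<k}$ at the non-root coordinates are fixed; the values $w_{\beta_\xi}\restriction a$ on the root are fixed (so the root contributes nothing); and, by shrinking, $\beta_\xi$ lies above all of $a_\xi\setminus a$ used for $\xi'<\xi$, so that the natural ordering is respected. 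The witnessing failure $w_{\beta_{\xi'}}\notin U_\xi$ then says: for some $i<k$, $o(a_\xi(i),\beta_{\xi'})\notin \overline{U_i}$—but this is a \emph{negative} requirement, so to get a genuine contradiction I instead run the argument as a \emph{positive} one: I want to find $\xi<\xi'$ with $w_{\beta_{\xi'}}\in U_\xi$, contradicting right-separatedness.

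The heart of the argument is Lemma~\ref{intorni}, which is precisely the tool for producing, given uncountable families $\mathscr{A}\subseteq[\omega_1]^k$ and $\mathscr{B}\subseteq[\omega_1]^l$ of pairwise disjoint sets and a target tuple of open neighborhoods, a pair $a\in\mathscr{A}$, $b\in\mathscr{B}$ with $\max a<\min b$ and $o(a(i),b(\phi(i)))\in U_i$ for all $i$. I would set $\mathscr{A}=\{a_\xi\setminus a : \xi\in\Xi'\}$ and $\mathscr{B}=\{\{\beta_\xi\} : \xi\in\Xi'\}$ (so $l=1$ and $\phi\equiv 0$), with the prescribed neighborhoods $\langle U_i\rangle_{i<k}$, and apply Lemma~\ref{intorni} to obtain $\xi<\xi'$ (after matching which family index the chosen $a$ and $b$ came from, using $\max a_\xi<\beta_{\xi'}$) such that $o(a_\xi(i),\beta_{\xi'})\in U_i$ for all $i<k$. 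Since $w_{\beta_{\xi'}}(a_\xi(i))=o(a_\xi(i),\beta_{\xi'})$ whenever $a_\xi(i)<\beta_{\xi'}$, and the root coordinates were trivialized, this gives $w_{\beta_{\xi'}}\in U_\xi$ with $\xi<\xi'$, contradicting the assumption that $\langle w_{\beta_\xi}\rangle$ is right-separated.

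The main obstacle, as usual in these arguments, is the bookkeeping in the reduction step: ensuring that after all the $\Delta$-system and counting reductions the root $a$ really can be ignored (one must check $w_\beta\restriction a$ can be made constant, which uses second countability of $\mathbb{T}^a$ and a further uncountable refinement), and that the order relation between $\beta_{\xi'}$ and the coordinates of $a_\xi$ is arranged so that the "$\xi<\beta$" branch of the definition of $w_\beta(\xi)$ is the one in force—otherwise $w_{\beta_{\xi'}}(a_\xi(i))$ might equal $1$ rather than $o(a_\xi(i),\beta_{\xi'})$. Handling a basic open set whose constraint happens to involve coordinates $\geq\beta_\xi$ (where $w_{\beta_\xi}$ is identically $1$) requires separating those coordinates out and noting they impose the same trivial condition on all later $w_{\beta_{\xi'}}$ as well, hence are harmless. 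Once these are dispatched, the oscillation lemma does all the real work, and the proof closes exactly as in Lemma~\ref{injezioni}.
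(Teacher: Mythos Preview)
Your approach is correct but follows a different route from the paper. The paper's argument is very short: assuming $\mathscr{L}_X$ is not hereditarily Lindel\"of, it extracts an uncountable \emph{discrete} subspace, splits the corresponding index set into two disjoint uncountable pieces $Y,Z\subseteq X$, and then observes that any bijection between two discrete spaces is automatically continuous---which contradicts Lemma~\ref{injezioni} (no continuous injection from an uncountable subspace of $\mathscr{L}_Y$ into $\mathscr{L}_Z$ when $Y\cap Z$ is countable). You instead start from a right-separated sequence and apply Lemma~\ref{intorni} directly to manufacture $\xi<\xi'$ with $w_{\beta_{\xi'}}\in U_\xi$, essentially re-running inside this proof the $\Delta$-system computation that the paper has already packaged into Lemma~\ref{injezioni}. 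Your route is more self-contained and, notably, avoids the passage from ``not hereditarily Lindel\"of'' to ``contains an uncountable discrete subspace,'' which is not automatic for regular spaces in general; the paper's route is slicker once Lemma~\ref{injezioni} is available, reducing everything to a one-line continuity observation about discrete spaces.

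One small correction to your bookkeeping: you cannot in general arrange that $w_{\beta_\xi}\!\restriction a$ is constant on the root $a$ of the $\Delta$-system, nor do you need to. What you actually need is that, for each root coordinate $\alpha\in a$ with its fixed target neighborhood $U_\alpha$, every $w_{\beta_\xi}$ with $\xi\in\Xi'$ already satisfies $w_{\beta_\xi}(\alpha)\in U_\alpha$---and this is automatic, since $w_{\beta_\xi}\in U_\xi$ and the root constraints are the same for all $\xi\in\Xi'$. Thus when you test whether $w_{\beta_{\xi'}}\in U_\xi$, the root coordinates are satisfied for free (because $\xi'\in\Xi'$), and Lemma~\ref{intorni} need only be invoked on the pairwise disjoint family $\{a_\xi\setminus a:\xi\in\Xi'\}$. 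The remaining index-matching issue (ensuring the output of Lemma~\ref{intorni} really gives $\xi<\xi'$ rather than $\xi\geq\xi'$) is handled, as you suggest, by thinning so that the blocks $a_\xi\setminus a$ and $\{\beta_\xi\}$ are increasing in $\xi$ and then, say, drawing $\mathscr{A}$ and $\mathscr{B}$ from disjoint cofinal sets of indices.
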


\begin{proof} If not, then $\mathscr{L}_{X}$ would contain an uncountable discrete subspace.
Moreover it would be possible to find disjoint $Y,Z\subseteq X$ such that $\mathscr{L}_{Y}$
and $\mathscr{L}_{Z}$ contain uncountable discrete subspaces.
It is well known that any function from a discrete space to another discrete space is continuous
and this contradicts Lemma \ref{injezioni}.
\qed
\end{proof}

\begin{corollary}[\cite{Moore}] There exists an $L$-space, i.e. a hereditary Lindel\"of non separable $T_3$
topological space.
\qed
\end{corollary}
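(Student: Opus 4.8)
The plan is to observe that the space $\mathscr{L}=\mathscr{L}_{\omega_1}$ already constructed above is itself an $L$-space, so no further construction is needed; it only remains to verify the three defining properties, taking $X=\omega_1$ throughout. I will check regularity, hereditary Lindel\"ofness, and non-separability in turn.

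\emph{Regularity.} By definition $\mathscr{L}$ is a subspace of $\mathbb{T}^{\omega_1}$. Since $\mathbb{T}$ is compact Hausdorff, $\mathbb{T}^{\omega_1}$ is compact Hausdorff by Tychonoff's theorem, hence normal, hence $T_3$ (in fact Tychonoff); and the $T_3$ property passes to subspaces, so $\mathscr{L}$ is a regular Hausdorff space. This step is completely routine.

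\emph{Hereditary Lindel\"ofness.} This is exactly the content of the theorem just proved, applied with $X=\omega_1$: $\mathscr{L}=\mathscr{L}_{\omega_1}$ is hereditarily Lindel\"of. That theorem rests in turn on Lemma \ref{injezioni} (there is no continuous injection between $\mathscr{L}_X$ and $\mathscr{L}_Y$ when $X$ and $Y$ have countable intersection), which is itself extracted from Lemma \ref{intorni} and Theorem \ref{justin}, i.e.\ from Moore's oscillation machinery together with Kronecker's theorem. So the genuinely hard work has already been carried out.

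\emph{Non-separability.} This is the Remark recorded above: $\mathscr{L}$ is not separable. (Were one to spell the argument out, the point is that no countable set of coordinates can approximate all the $w_\beta$: for a fixed coordinate $\xi$ the value $o(\xi,\beta)=z_\xi^{\osc(\xi,\beta)+1}$ wanders through a dense subset of $\mathbb{T}$ as $\beta$ grows, by density of $\{z_\xi^{m}:m<\omega\}$ combined with the long runs of oscillation values furnished by Theorem \ref{moore}; but this is already granted.) Since $\mathscr{L}$ is not separable it is a fortiori not hereditarily separable, so $\mathscr{L}$ is an $L$-space under any of the customary conventions. Assembling the three points, $\mathscr{L}$ is a regular Hausdorff, hereditarily Lindel\"of, non-separable space, as required. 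The only ``obstacle'' worth naming here is conceptual rather than technical: recognizing that the space already built, without any modification, does the job, since all the real difficulty lives in the lemmas above.
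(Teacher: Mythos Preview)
Your proof is correct and matches the paper's approach exactly: the corollary is marked \qed\ with no further argument because, as you observe, $\mathscr{L}=\mathscr{L}_{\omega_1}$ is $T_3$ as a subspace of the compact Hausdorff space $\mathbb{T}^{\omega_1}$, hereditarily Lindel\"of by the preceding theorem, and non-separable by the earlier Remark. You have simply spelled out what the paper leaves implicit.
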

\bibliographystyle{abbrv}
\bibliography{osc}

\end{document}